\newtheorem{thm}{Theorem}[section]
\newtheorem{lem}[thm]{Lemma}
\newtheorem{example}[thm]{Example}
\newtheorem{definition}[thm]{Definition}
\newcommand{\bbQ}{\mathbb Q}
\theoremstyle{definition}
\numberwithin{equation}{section}
\begin{document}


\baselineskip=17pt



\title[On the $x$--Coordinates of Pell Equations ...]{On the $x$--Coordinates of Pell Equations which are $k$--Generalized Fibonacci Numbers}

\author[M. Ddamulira]{Mahadi Ddamulira}
\address{Institute of Analysis and Number Theory \newline \indent Graz University of Technology \newline \indent Kopernikusgasse 24/II \newline \indent A-8010 Graz, Austria}
\email{mddamulira@tugraz.at}

\author[F. Luca]{Florian Luca}
\address{School of Mathematics \newline \indent University of the Witwatersrand \newline \indent Private Bag X3 \\ WITS 2050 \newline \indent Johannesberg, South Africa}
\address{Max Planck Institute for Mathematics,\newline 
         \indent Vivatsgasse 7\newline\indent  53111 Bonn, Germany}
         
\address{Department of Mathematics\newline \indent Faculty of Sciences \newline \indent University of Ostrava\newline
           \indent 30 dubna 22 \newline \indent 701 03 Ostrava 1, Czech Republic}

\email{Florian.Luca\char'100wits.ac.za}
\date{}

\begin{abstract}
For an integer $k\geq 2$, let $\{F^{(k)}_{n}\}_{n\geqslant 2-k}$ be the $ k$--generalized Fibonacci sequence which starts with $0, \ldots, 0,1$ (a total of $k$ terms) and for which each term afterwards is the sum of the $k$ preceding terms. In this paper, for an integer $d\geq 2$ which is square free, we show that there is at most one value of the positive integer $x$ participating in the Pell equation $x^{2}-dy^{2}
=\pm 1$ which is a $k$--generalized Fibonacci number, with a couple of parametric exceptions which we completely characterise. This paper extends previous work from \cite{Luca15}  for the case $k=2$ and \cite{Luca16} for the case $k=3$.
\end{abstract}

\subjclass[2010]{Primary 11A25; Secondary 11B39, 11J86}

\keywords{Pell equation, Generalized Fibonacci sequence, Linear forms in logarithms, Reduction method}

\maketitle

\section{Introduction}
Let $d\geq 2$ be a positive integer which is not a perfect square. It is well known that the Pell equation
\begin{eqnarray}
x^{2}-dy^{2}=\pm 1 \label{Pelleqn}
\end{eqnarray}
has infinitely many positive integer solutions $(x,y)$. By putting $(x_1, y_1)$ for the smallest such solution, all solutions are of the form $ (x_n, y_n) $ for some positive integer $n$, where
\begin{eqnarray}
x_n+y_n\sqrt{d} = (x_1+y_1\sqrt{d})^n\qquad {\text{\rm for~all}} \quad n\ge 1.\label{Pellsoln}
\end{eqnarray}
Recently, Luca and Togb\'e \cite{Luca15} considered the Diophantine equation
\begin{equation}\label{eq:Luca1} 
x_{n} =F_{m},
\end{equation}
where $\{F_m\}_{m \geqslant 0}$ is the sequence of Fibonacci numbers given
by $F_0 = 0$, $F_1 = 1$ and $F_{m+2} = F_{m+1} + F_m$ for all $m \geqslant 0$. They proved that  equation \eqref{eq:Luca1} has at most one solution $(n,m)$ in positive integers except for $d=2$, in which case equation \eqref{eq:Luca1} has the three solutions $(n,m)=(1,1), (1,2), (2,4)$.

Luca, Montejano, Szalay and Togb\'e \cite{Luca16} considered the Diophantine equation
\begin{equation}\label{eq:Luca2} 
x_{n}=T_m, 
\end{equation}
where $\{T_m\}_{m \geqslant 0}$ is the sequence of Tribonacci numbers given
by $T_0 = 0$, $T_1 = 1$, $T_{2} =1 $ and $T_{m+3} = T_{m+2} + T_{m+1}+T_{m}$ for all $m \geqslant 0$. They proved that equation \eqref{eq:Luca2} has at most one solution $(n,m)$ in positive integers 
for all $d$ except for $d=2$ when equation \eqref{eq:Luca2} has the three solutions $(n,m)=(1,1), (1,2), (3,5)$  and when $d=3$ case in which equation \eqref{eq:Luca2} has the two solutions $(n,m)=(1,3), (2,5)$.

The purpose of this paper is to generalize the previous results. Let $k \geqslant 2$ be an integer. We consider a generalization of Fibonacci sequence called the $k$--generalized Fibonacci sequence $\lbrace F_m^{(k)} \rbrace_{m\geqslant 2-k}$ defined as
\begin{equation}
F_m^{(k)} = F_{m-1}^{(k)} + F_{m-2}^{(k)} + \cdots + F_{m-k}^{(k)},\label{fibb1}
\end{equation}
with the initial conditions
\[F_{-(k-2)}^{(k)} = F_{-(k-3)}^{(k)} = \cdots = F_{0}^{(k)} = 0 \quad {\text {\rm  and  }}\quad F_{1}^{(k)} = 1.\]
We call $F_{m}^{(k)}$ the $m$th $k$--generalized Fibonacci number. Note that when $k=2$, it  coincides with the Fibonacci numbers and when $k=3$ it is the Tribonacci number.

The first $k+1$ nonzero terms in $F_{m}^{(k)}$ are powers of $2$, namely
\begin{equation*}
F_{1}^{(k)}=1,\quad F_{2}^{(k)}=1,\quad F_{3}^{(k)}=2, \quad F_{4}^{(k)}=4,\ldots, F_{k+1}^{(k)}=2^{k-1}.
\end{equation*}
Furthermore, the next term is $F_{k+2}^{(k)}=2^{k}-1$.
Thus, we have that
\begin{equation}\label{Fibbo111}
F_{m}^{(k)} = 2^{m-2} \quad {\text{\rm holds for all}}\quad 2\leq m\leq k+1.
\end{equation}
We also observe that the recursion \eqref{fibb1} implies the three--term recursion
$$
F_{m}^{(k)} =2F_{m-1}^{(k)} - F_{m-k-1}^{(k)} \quad {\text{\rm for all}}\quad m\geq 3,\label{fibb2}
$$
which can be used to prove by induction on $m$ that $F_m^{(k)}<2^{m-2}$ for all $m\geq k+2$ (see also \cite{BBL17}, Lemma $2$).

\section{Main Result}

In this paper, we show that there is at most one value of the positive integer $x$ participating in \eqref{Pelleqn} which is a $ k$--generalized Fibonacci number, with a couple of parametric 
exceptions that we completely characterise. This can be interpreted as solving the system of equations
\begin{align}\label{Problem}
 x_{n_1}=F_{m_1}^{(k)},\qquad x_{n_2}=F_{m_2}^{(k)},
\end{align}
with $n_2>n_1\geq 1$, $m_2>m_1\geq 2$ and $k\ge 2$. The fact that $F_1^{(k)}=F_2^{(k)}=1$, allows us to assume that $m\ge 2$. That is, if $F_m^{(k)}=1$ for some positive integer $m$, then we will assume that $m=2$. As we already mentioned, the cases $k=2$ and $ k=3 $ have been solved completely by Luca and Togb\'e \cite{Luca15} and Luca, Montejano, Szalay and Togb\'e \cite{Luca16}, respectively. 
So, we focus on the case  $k\geqslant 4$.

\medskip

We put $\epsilon:=x_1^2-dy_1^2$. Note that $dy_1^2=x_1^2-\epsilon$, so the pair $(x_1,\epsilon)$ determines $d,~y_1$. Our main result is the following: 

\begin{thm}\label{Main}
Let $k\geq 4$ be a fixed integer. Let $d\geq 2$ be a square-free integer. Assume that
\begin{equation}
\label{eq:double}
x_{n_{1}}=F_{m_1}^{(k)},\qquad {\text{and}}\quad x_{n_{2}}=F_{m_2}^{(k)}
\end{equation}
for positive integers $m_2>m_1\ge 2$ and $n_2>n_1\ge 1$, where $x_n$ is the $x$--coordinate of the $n$th solution of the Pell equation \eqref{Pelleqn}. Then, either:
\begin{itemize}
\item[(i)] $n_1=1,~n_2=2$, $m_1=(k+3)/2$, $m_2=k+2$ and $\epsilon=1$; or
\item[(ii)] $n_1=1,~n_2=3,~k=3\times 2^{a+1}+3a-5,~m_1=3\times 2^{a}+a-1,~m_2=9\times 2^{a}+3a-5$ for some positive integer $a$ and $\epsilon=1$.
\end{itemize}
\end{thm}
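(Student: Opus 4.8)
The plan is to follow the now-standard two-stage strategy for such "at most one solution" problems: first derive absolute upper bounds on all of $n_2,m_2$ (hence on $k$ in terms of these), and then reduce those bounds to a finite, checkable range. Write $\alpha=\alpha(k)$ for the dominant root of the $k$-bonacci recurrence (so $\alpha\in(2-2^{1-k},2)$) and use the known Binet-type approximation $F_m^{(k)}=c_k\alpha^{m-1}+(\text{error})$ together with the bounds $\alpha^{m-2}\le F_m^{(k)}\le\alpha^{m-1}$. On the Pell side, from \eqref{Pellsoln} one has $x_n=(\delta^n+\eta^n)/2$ with $\delta=x_1+y_1\sqrt d$, $\eta=\pm\delta^{-1}$, and $\delta^{n-1}\le x_n\le\delta^n$. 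Equating the two systems in \eqref{eq:double} gives two linear forms in logarithms, one involving $\log\delta,\log\alpha,\log c_k$ for the index pair $(n_1,m_1)$ and one for $(n_2,m_2)$; an application of Matveev's theorem to each bounds $m_1,m_2,n_1,n_2$ polynomially (in fact roughly linearly after taking logarithms) in terms of $\log\delta$ and of each other, and combining the two forms — the key trick, subtracting to eliminate $\log\delta$ — yields a linear form in $\log\alpha$ and $\log c_k$ alone whose smallness forces a first set of relations among $k,m_1,m_2,n_1,n_2$.

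The heart of the argument, and what produces the two exceptional families, is the small-cases analysis when $n_1$ is tiny. Since $x_1=F_{m_1}^{(k)}$ and $F_{m_1}^{(k)}$ is either a power of $2$ (when $m_1\le k+1$, by \eqref{Fibbo111}) or close to $2^{m_1-2}$, the congruence and size constraints on $d=(x_1^2-\epsilon)/y_1^2$ are extremely restrictive; I would first dispose of $n_1\ge 2$ (showing it forces the bounds to collapse so fast that no solution with $n_2>n_1$ survives), then treat $n_1=1$ in earnest. With $n_1=1$ we have $x_1=F_{m_1}^{(k)}$ and $x_2=2x_1^2-\epsilon=F_{m_2}^{(k)}$ when $\epsilon=x_1^2-dy_1^2$ with $y_1=1$, or more generally $x_{n_2}$ a low-degree polynomial in $x_1$; matching $2(F_{m_1}^{(k)})^2\mp 1=F_{m_2}^{(k)}$ against $F_{m_2}^{(k)}\approx 2^{m_2-2}$ and $F_{m_1}^{(k)}\approx 2^{m_1-2}$ forces $m_2\approx 2m_1$, and a careful use of \eqref{Fibbo111} (i.e. the regime where $k$-bonacci numbers are exactly powers of $2$) pins down case (i): $F_{m_1}^{(k)}=2^{(k-1)/2}$, $2\cdot 2^{k-1}-1=2^k-1=F_{k+2}^{(k)}$, giving $m_1=(k+3)/2$, $m_2=k+2$, $\epsilon=1$. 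Case (ii) arises analogously from $n_2=3$, where $x_3=4x_1^3-3\epsilon x_1=x_1(2x_1-\epsilon)(2x_1+\epsilon)$ and matching the factorization $F_{m_1}^{(k)}(2F_{m_1}^{(k)}-1)(2F_{m_1}^{(k)}+1)=F_{m_2}^{(k)}$ to the power-of-$2$-times-Mersenne shape $2^{a}\cdot(2^{a+1}-1)\cdot(2^{a+1}+1)$ forces the exact parametrization $k=3\cdot 2^{a+1}+3a-5$, $m_1=3\cdot 2^{a}+a-1$, $m_2=9\cdot 2^{a}+3a-5$; one must also verify $F_{m_2}^{(k)}$ really equals this product and not merely is asymptotic to it, which is where the three-term recursion $F_m^{(k)}=2F_{m-1}^{(k)}-F_{m-k-1}^{(k)}$ does the bookkeeping.

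Once the large-$k$ (equivalently large-index) range is cut down by Matveev to something like $k,n_2,m_2<C$ for an explicit but astronomically large $C$, I would run the Baker–Davenport reduction (the Dujella–Peth\H{o} lemma on continued-fraction convergents of the relevant ratios of logarithms) on the two linear forms, in several rounds, to bring the bound down to a few hundred; for the remaining finitely many $(k,n_1,n_2,m_1,m_2)$ one checks directly — using the exact recurrences and the fact that $\epsilon$ together with $x_1=F_{m_1}^{(k)}$ determines $d$ — that no solutions occur beyond (i) and (ii). I expect the main obstacle to be technical rather than conceptual: the reduction step is complicated by the presence of the parameter $k$ inside $\alpha(k)$ and $c_k$, so the linear forms are not in fixed logarithms but in a one-parameter family, and one must either treat $k$ in ranges or use the uniform estimates $|\alpha-2|<2^{1-k}$, $|c_k-1/2|$ small, to replace $\log\alpha$ by $\log 2$ plus a controlled error — carrying that error through Matveev and through the continued-fraction reduction without losing effectivity is the delicate part, and is precisely what confines the genuinely new phenomena to the two parametric families above.
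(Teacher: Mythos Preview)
Your overall architecture---Matveev on the two linear forms, eliminate $\log\delta$ by subtraction, reduce, then case-analyse small $n_2$---matches the paper. But several concrete steps are either wrong or missing.

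First, your factorisation $x_3=4x_1^3-3\epsilon x_1=x_1(2x_1-\epsilon)(2x_1+\epsilon)$ is false: the right-hand side is $x_1(4x_1^2-1)$, not $x_1(4x_1^2-3\epsilon)$. Consequently your ``$2^a(2^{a+1}-1)(2^{a+1}+1)$'' matching scheme for family~(ii) has no basis. The paper instead uses the Cooper--Howard exact formula to write $F_{m_2}^{(k)}=2^{m_2-2}-(m_2-k)2^{m_2-k-3}$ on the range $m_2\in[k+2,2k+2]$, equates it with $4(2^{m_1-2})^3-3\epsilon\cdot 2^{m_1-2}$, and reduces to $(3m_1-k-2)\,2^{2m_1-k-3}=3$, whose solutions in integers give the parametrisation via an auxiliary exponent $a$; a second subcase $m_2\in[2k+3,3k+1]$ must be ruled out separately. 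None of this is a factorisation argument, and your recursion-bookkeeping remark does not substitute for it.

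Second, and more seriously, you stop at $n_2=3$. The paper devotes its longest and most delicate sections to $n_2=4$ (three subranges of $m_2$ via Cooper--Howard, each disposed of by parity/congruence) and to $n_2\ge 5$, where one must match \emph{two} further terms of the Dickson expansion of $2x_{n_2}$ against the Cooper--Howard expansion of $2F_{m_2}^{(k)}$, deriving a coupled system $m_2-1=n_2(m_1-1)$, $(m_2-k)/2^{b}=n_2$, $(m_2-2k+1)(m_2-2k-2)=2^{2b}n_2(n_2-3)$ whose integer solutions are then eliminated by divisibility and size. Your outline contains no mechanism for this; ``bounds collapse so fast'' is not a proof, and in fact the bounds from Matveev alone leave $n_2$ potentially large. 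Finally, your plan to ``dispose of $n_1\ge 2$'' is also handled in the paper not by a collapsing bound but by showing (via the expansion again) that $m_1\le k+1$, whence $x_{n_1}=2^{m_1-2}$ is a power of $2$, and then invoking a Primitive-Divisor/Ljunggren argument that $P(x_n)\le 5$ forces $n=1$.
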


\section{Preliminary Results}

Here, we recall some of the facts and properties of the $k$-generalized Fibonacci sequence and solutions to Pell equations which will be used later in this paper.

\subsection{Notations and terminology from algebraic number theory} 

We begin by recalling some basic notions from algebraic number theory.

Let $\eta$ be an algebraic number of degree $d$ with minimal primitive polynomial over the integers
$$
a_0x^{d}+ a_1x^{d-1}+\cdots+a_d = a_0\prod_{i=1}^{d}(x-\eta^{(i)}),
$$
where the leading coefficient $a_0$ is positive and the $\eta^{(i)}$'s are the conjugates of $\eta$. Then the \textit{logarithmic height} of $\eta$ is given by
$$ 
h(\eta):=\dfrac{1}{d}\left( \log a_0 + \sum_{i=1}^{d}\log\left(\max\{|\eta^{(i)}|, 1\}\right)\right).
$$
In particular, if $\eta=p/q$ is a rational number with $\gcd (p,q)=1$ and $q>0$, then $h(\eta)=\log\max\{|p|, q\}$. The following are some of the properties of the logarithmic height function $h(\cdot)$, which will be used in the next sections of this paper without reference:
\begin{eqnarray}
h(\eta\pm \gamma) &\leq& h(\eta) +h(\gamma) +\log 2,\nonumber\\
h(\eta\gamma^{\pm 1})&\leq & h(\eta) + h(\gamma),\\
h(\eta^{s}) &=& |s|h(\eta) \qquad (s\in\mathbb{Z}). \nonumber
\end{eqnarray}

\subsection{$k$-generalized Fibonacci numbers}

It is known that the characteristic polynomial of the $k$--generalized Fibonacci numbers $F^{(k)}:=(F_m^{(k)})_{m\geq 2-k}$, namely
$$
\Psi_k(x) := x^k - x^{k-1} - \cdots - x - 1,
$$
is irreducible over $\bbQ[x]$ and has just one root outside the unit circle. Let $\alpha := \alpha(k)$ denote that single root, which is located between $2\left(1-2^{-k} \right)$ and $2$ (see \cite{Dresden2014}). To simplify notation, in our application we shall omit the dependence on $k$ of $\alpha$. We shall use $\alpha^{(1)}, \dotso, \alpha^{(k)}$ for all roots of $\Psi_k(x)$ with the convention that $\alpha^{(1)} := \alpha$.

We now consider for an integer $ k\geq 2 $, the function
\begin{eqnarray}\label{fun12}
f_{k}(z) = \dfrac{z-1}{2+(k+1)(z-2)} \qquad {\text{for}}\qquad z \in \mathbb{C}.
\end{eqnarray}
With this notation, Dresden and Du presented in  \cite{Dresden2014} the following ``Binet--like" formula for the terms of $F^{(k)}$:
\begin{eqnarray} \label{Binet}
F_m^{(k)} = \sum_{i=1}^{k} f_{k}(\alpha^{(i)}) {\alpha^{(i)}}^{m-1}.
\end{eqnarray}
It was proved in \cite{Dresden2014} that the contribution of the roots which are inside the unit circle to the formula (\ref{Binet}) is very small, namely that the approximation
\begin{equation} \label{approxgap}
\left| F_m^{(k)} - f_{k}(\alpha)\alpha^{m-1} \right| < \dfrac{1 }{2} \quad \mbox{holds~ for~ all~ } m \geqslant 2 - k.
\end{equation}
It was proved by Bravo and Luca in \cite{BBL17} that
\begin{eqnarray}\label{Fib12}
\alpha^{m-2} \leq F_{m}^{(k)} \leq \alpha^{m-1}\qquad {\text{\rm holds for all}}\qquad m\geq 1\quad {\text{\rm and}}\quad k\geq 2.
\end{eqnarray}
The observations from the expressions \eqref{Binet} to \eqref{Fib12} lead us to call $ \alpha$ the \textit{dominant root} of $ F^{(k)} $.

Before we conclude this section, we present some useful lemmas that will be used in the next sections on this paper. The following lemma was proved by Bravo and Luca in \cite{BBL17}.
\begin{lem}[Bravo, Luca]\label{fala5}
Let $k\geq 2$, $\alpha$ be the dominant root of $\{F^{(k)}_m\}_{m\ge 2-k}$, and consider the function $f_{k}(z)$ defined in \eqref{fun12}. Then:
\begin{itemize}
\item[(i)]\label{kat1} The inequalities
$$
\dfrac{1}{2}< f_{k}(\alpha)< \dfrac{3}{4}\qquad \text{and}\qquad |f_{k}(\alpha^{(i)})|<1, \qquad  2\leq i\leq k
$$
hold. In particular, the number $f_{k}(\alpha)$ is not an algebraic integer.
\item[(ii)]\label{kat2}The logarithmic height of $f_k(\alpha)$ satisfies $h(f_{k}(\alpha))< 3\log k$.
\end{itemize}
\end{lem}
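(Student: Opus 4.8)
The plan is to anchor everything on a single algebraic identity satisfied by the dominant root. Since $\Psi_k(x)=x^k-x^{k-1}-\cdots-1$ satisfies $(x-1)\Psi_k(x)=x^{k+1}-2x^k+1$, evaluating at $x=\alpha$ and using $\alpha\neq 1$ gives $\alpha^{k+1}-2\alpha^k+1=0$, that is
\[
\alpha^k(2-\alpha)=1,\qquad\text{equivalently}\qquad 2-\alpha=\alpha^{-k}.
\]
Writing $t:=\alpha^{-k}=2-\alpha$, the bound $2(1-2^{-k})<\alpha<2$ from \cite{Dresden2014} becomes $0<t<2^{1-k}$. Since $2+(k+1)(z-2)=(k+1)z-2k$, substituting $\alpha-2=-t$ and $\alpha-1=1-t$ turns the definition \eqref{fun12} into the convenient shape
\[
f_k(\alpha)=\frac{\alpha-1}{(k+1)\alpha-2k}=\frac{1-t}{2-(k+1)t}.
\]

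For part (i) I would read off the two inequalities from this last expression. First, $0<t<2^{1-k}\le 2/(k+1)$ (the last step because $k+1<2^k$ for $k\ge 2$) keeps the denominator $2-(k+1)t$ positive. The lower bound $f_k(\alpha)>1/2$ is then equivalent to $(k-1)t>0$, which is immediate; the upper bound $f_k(\alpha)<3/4$ is equivalent to $(3k-1)t<2$, i.e. to $\alpha>2-2/(3k-1)$, and this follows from $\alpha>2-2^{1-k}$ once $2^{1-k}\le 2/(3k-1)$, i.e. $3k-1\le 2^k$, which holds for $k\ge 3$, the single case $k=2$ being checked by hand with $\alpha=(1+\sqrt5)/2$. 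For the conjugates, each $\alpha^{(i)}$ with $i\ge 2$ satisfies $|\alpha^{(i)}|<1$, so in $f_k(z)=(z-1)/((k+1)z-2k)$ the numerator has modulus $<2$ while the denominator has modulus $>2k-(k+1)=k-1$, giving $|f_k(\alpha^{(i)})|<2/(k-1)\le 1$ for $k\ge 3$ (and a direct check for $k=2$). Finally, $f_k(\alpha)\in\bbQ(\alpha)$ is nonzero with relative norm $N_{\bbQ(\alpha)/\bbQ}(f_k(\alpha))=\prod_{i=1}^{k}f_k(\alpha^{(i)})$ of absolute value $<(3/4)\cdot 1<1$; were $f_k(\alpha)$ an algebraic integer this norm would be a nonzero rational integer, hence of absolute value $\ge 1$, a contradiction, so $f_k(\alpha)$ is not an algebraic integer.

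For part (ii) I would estimate the height through the factorisation $f_k(\alpha)=(\alpha-1)\cdot\bigl((k+1)\alpha-2k\bigr)^{-1}$, so that $h(f_k(\alpha))\le h(\alpha-1)+h((k+1)\alpha-2k)$. Both $\alpha-1$ and $(k+1)\alpha-2k$ are algebraic integers of degree $k$, so their heights are averages of $\log\max\{|\cdot|,1\}$ over the conjugates $\alpha^{(i)}$. Using $|\alpha-1|<1$ and $|(k+1)\alpha-2k|<2$ for the dominant conjugate, and $|\alpha^{(i)}-1|<2$, $|(k+1)\alpha^{(i)}-2k|<3k+1$ for $i\ge 2$, one gets $h(\alpha-1)<\log 2$ and $h((k+1)\alpha-2k)<\tfrac{\log 2}{k}+\log(3k+1)$, whence $h(f_k(\alpha))<\log 2+\tfrac{\log 2}{k}+\log(3k+1)$. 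A short monotonicity check shows this lies below $3\log k$ for all $k\ge 3$, and $k=2$ is handled directly (there $f_k(\alpha)=(5+\sqrt5)/10$, of height $\tfrac12\log 5$).

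The main obstacle I anticipate is not any single estimate but the uniformity in $k$: the crude interval $2(1-2^{-k})<\alpha<2$ loses too much for the smallest values of $k$, so the threshold inequalities $3k-1\le 2^k$, $2/(k-1)\le 1$, and $\log 2+\tfrac{\log 2}{k}+\log(3k+1)<3\log k$ all only just hold from $k=3$ onward and genuinely fail at $k=2$. Hence the real care lies in isolating $k=2$ for a direct computation and confirming that the general bounds are sharp enough from $k=3$; replacing $t<2^{1-k}$ by the exact relation $t=\alpha^{-k}$ wherever feasible is what keeps the argument clean.
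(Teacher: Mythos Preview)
The paper does not give a proof of this lemma at all; it simply quotes the result from Bravo and Luca \cite{BBL17}. Your proposal therefore cannot be ``compared'' to the paper's proof, because there is none here. That said, your argument is self-contained and correct, and it is worth recording what makes it work.

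Your key observation is the exact relation $2-\alpha=\alpha^{-k}$, obtained from $(x-1)\Psi_k(x)=x^{k+1}-2x^k+1$, which lets you rewrite $f_k(\alpha)=(1-t)/(2-(k+1)t)$ with $t=2-\alpha\in(0,2^{1-k})$. The bounds $1/2<f_k(\alpha)<3/4$ then reduce to the elementary inequalities $(k-1)t>0$ and $(3k-1)t<2$; the latter follows from $t<2^{1-k}$ together with $3k-1\le 2^k$ for $k\ge 3$, and your direct verification $f_2(\alpha)=(5+\sqrt5)/10$ handles $k=2$. The conjugate bound $|f_k(\alpha^{(i)})|<2/(k-1)$ and the norm argument for ``not an algebraic integer'' are both clean. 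For part~(ii) your decomposition $h(f_k(\alpha))\le h(\alpha-1)+h((k+1)\alpha-2k)$ and the conjugate-by-conjugate estimates give $h(f_k(\alpha))<\log 2+\tfrac{\log 2}{k}+\log(3k+1)$; this is indeed below $3\log k$ for $k\ge 3$ (with $k=3$ the margin is thin: $3.23<3.30$), and $h(f_2(\alpha))=\tfrac12\log 5<3\log 2$ covers $k=2$.

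Your closing remark is accurate: the only genuine care required is at $k=2$, where each of the generic inequalities just fails and the explicit values must be checked. Nothing is missing.
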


Next, we recall the following result due to Cooper and Howard \cite{CooperHoward}.

\begin{lem}[Cooper, Howard]
\label{teoHoward}
For $k\geq 2$ and $m\geq k+2$,
\[
F_m^{(k)}=2^{m-2}+\sum_{j=1}^{\lfloor \frac{m+k}{k+1} \rfloor-1}C_{m,j} \,2^{m-(k+1)j-2},
\]
where
\[
C_{m,j}=(-1)^j \left[ \binom{m-jk}{j}-\binom{m-jk-2}{j-2}  \right].
\]
\end{lem}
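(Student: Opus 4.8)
The plan is to prove the identity by the generating-function method, reading off the coefficients and then reducing to a finite binomial identity. First I would record that summing the defining recurrence \eqref{fibb1} produces a rational generating function. Since $1-x-x^2-\cdots-x^k=(1-2x+x^{k+1})/(1-x)$ — which is exactly the algebraic content of the three-term recursion $F_m^{(k)}=2F_{m-1}^{(k)}-F_{m-k-1}^{(k)}$ noted in Section~1 — one obtains
\[
\sum_{m\ge 1}F_m^{(k)}x^{m-1}=\frac{1}{1-x-x^2-\cdots-x^k}=\frac{1-x}{1-2x+x^{k+1}}.
\]
Writing $A_N:=[x^N]\,(1-2x+x^{k+1})^{-1}$, the factor $1-x$ in the numerator gives $F_m^{(k)}=A_{m-1}-A_{m-2}$, so the whole problem reduces to computing $A_N$ and simplifying this difference.

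Next I would expand $(1-2x+x^{k+1})^{-1}=\sum_{\ell\ge 0}(2x-x^{k+1})^{\ell}$ and apply the binomial theorem to each summand, writing $(2x-x^{k+1})^{\ell}=\sum_{j}\binom{\ell}{j}2^{\ell-j}(-1)^jx^{\ell+kj}$. Collecting the coefficient of $x^N$ (so $\ell=N-kj$, which is admissible precisely when $0\le j\le N/(k+1)$) yields
\[
A_N=\sum_{j=0}^{\lfloor N/(k+1)\rfloor}(-1)^j\binom{N-kj}{j}\,2^{N-(k+1)j},
\]
the exponent of $2$ being automatically nonnegative on this range. Substituting $N=m-1$ and $N=m-2$, factoring the common power $2^{m-(k+1)j-2}$ out of $A_{m-1}-A_{m-2}$, and isolating the $j=0$ term (which contributes exactly $2^{m-2}$) leaves a sum over $j\ge 1$ whose coefficient is $(-1)^j\bigl[2\binom{m-1-kj}{j}-\binom{m-2-kj}{j}\bigr]$.

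It then remains to match this coefficient with the stated $C_{m,j}$, i.e.\ to verify the purely combinatorial identity
\[
2\binom{m-1-kj}{j}-\binom{m-2-kj}{j}=\binom{m-jk}{j}-\binom{m-jk-2}{j-2}.
\]
I expect this to be the technical heart of the argument, though it is routine: two applications of Pascal's rule show that each side equals $\binom{m-jk-2}{j}+2\binom{m-jk-2}{j-1}$. The one genuine subtlety — and the step I would be most careful about — is the bookkeeping of the summation limits. The sums for $A_{m-1}$ and $A_{m-2}$ truncate at $\lfloor(m-1)/(k+1)\rfloor$ and $\lfloor(m-2)/(k+1)\rfloor$, which may differ by one; whenever they do, the extra top term from $A_{m-1}$ is harmless because the corresponding $\binom{m-2-kj}{j}$ vanishes, so the combined expression with coefficient $C_{m,j}$ extends cleanly to the single upper limit $\lfloor(m-1)/(k+1)\rfloor=\lfloor(m+k)/(k+1)\rfloor-1$ claimed in the statement. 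As an independent check — and because it is likely the route of \cite{CooperHoward} — one can instead argue by induction on $m$ using $F_m^{(k)}=2F_{m-1}^{(k)}-F_{m-k-1}^{(k)}$: the factor $2$ shifts the exponents of $2$ by one while the subtraction of $F_{m-k-1}^{(k)}$ reassembles the binomial coefficients via the same Pascal identities, with the base cases $k+2\le m\le 2k+2$ handled directly (using $F_{m-k-1}^{(k)}=2^{m-k-3}$ from \eqref{Fibbo111} once $m\ge k+3$).
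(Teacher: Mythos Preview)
Your argument is correct. The generating-function derivation of $A_N$ is sound, the reduction $F_m^{(k)}=A_{m-1}-A_{m-2}$ is right, and the binomial identity
\[
2\binom{m-1-kj}{j}-\binom{m-2-kj}{j}=\binom{m-kj}{j}-\binom{m-kj-2}{j-2}
\]
does indeed follow from two applications of Pascal's rule (both sides equal $\binom{m-kj-1}{j}+\binom{m-kj-2}{j-1}$, hence also $\binom{m-kj-2}{j}+2\binom{m-kj-2}{j-1}$ as you wrote). Your handling of the upper summation index is accurate: $\lfloor (m-1)/(k+1)\rfloor=\lfloor (m+k)/(k+1)\rfloor-1$, and when $(k+1)\mid m-1$ the ``missing'' top term from $A_{m-2}$ has $\binom{j-1}{j}=0$, so nothing is lost.

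As for the comparison: the paper does not supply a proof of this lemma at all. It is merely \emph{recalled} from Cooper and Howard \cite{CooperHoward} and used as a black box (see the sentence preceding Lemma~\ref{teoHoward}), with Example~\ref{exa:1} and Lemma~\ref{Gomez} then unpacking it for the ranges needed. So there is no ``paper's own proof'' to compare against; you have in effect filled in what the paper only cites. Your closing remark that the original source may proceed by induction on the three-term recursion is a reasonable guess, but strictly speaking it is extraneous to the verification.
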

In the above, we have denoted by $\lfloor x \rfloor$ the greatest integer less than or equal to $x$ and used the convention that ${\displaystyle{\binom{a}{b}=0}}$ if either $a<b$ or if one of $a$ or $b$ is negative. 

Before going further, let us see some particular cases of Lemma \ref{teoHoward}.

\begin{example}
\label{exa:1}
\begin{itemize}
\item[(i)] Assume that $m\in [2,k+1]$. Then $1< \frac{m+k}{k+1}<2$, so $\lfloor \frac{m+k}{k+1}\rfloor=1$. In this case,
$$
F_m^{(k)}=2^{m-2},
$$
a fact which we already knew. 
\item[(ii)] Assume that $m\in [k+2,2k+2]$. Then $2\le \frac{m+k}{k+1}<3$, so $\lfloor \frac{m+k}{k+1}\rfloor=2$. In this case,
\begin{eqnarray*}
F_m^{(k)} & = & 2^{m-2}+C_{m,1} 2^{m-(k+1)-2}\\
& = & 2^{m-2}-\left(\binom{m-k}{1}-\binom{m-k-2}{-1}\right)2^{m-k-3}\\
& = & 2^{m-2}-(m-k)2^{m-k-3}.
\end{eqnarray*}
\item[(iii)] Assume that $m\in [2k+3,3k+3]$. Then $3\le \frac{m+k}{k+1}<4$, so $\lfloor\frac{m+k}{k+1}\rfloor=3$. In this case,
\begin{eqnarray*}
F_m^{(k)} & = & 2^{m-2}+C_{m,1} 2^{m-(k+1)-2}+C_{m,2}2^{m-2(k+1)-2}\\
& = & 2^{m-2}-(m-k)2^{m-k-3}+\left(\binom{m-2k}{2}-\binom{m-2k-2}{0}\right)2^{m-2k-4}\\
& = & 2^{m-2}-(m-k)2^{m-k-3}+\left(\frac{(m-2k)(m-2k-1)}{2}-1\right)2^{m-2k-4}\\
& = & 2^{m-2}-(m-k)2^{m-k-3}+(m-2k+1)(m-2k-2)2^{m-2k-5}.
\end{eqnarray*}
\end{itemize}
\end{example} 

G{\'o}mez and Luca in \cite{Gomez} derived from the Cooper and Howard's formula the following asymptotic expansion of $F_m^{(k)}$ valid when $2\le m<2^k$. 

\begin{lem}[G\'omez, Luca]\label{Gomez}
If $ m<2^{k} $, then the following estimate holds:
\begin{eqnarray}\label{Cooper2}
F_{m}^{(k)}&=&2^{m-2}\left(1+\delta_1(m)\dfrac{k-m}{2^{k+1}}+\delta_2(m)\dfrac{f(k,m)}{2^{2k+2}}+\eta(k,m)\right),
\end{eqnarray}
where $ f(k,m):=\frac{1}{2}(z-1)(z+2);~ z=2k-m$, $\eta:=\eta(k,m)$ is a real number satisfying
\begin{eqnarray*}
|\eta| < \dfrac{4m^{3}}{2^{3k+3}},
\end{eqnarray*}
and $\delta_i(m)$ is the characteristic function of the set $\{m>i(k+1)\}$ for $i=1,2$. 
\end{lem}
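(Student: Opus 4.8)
The plan is to read the expansion straight off the exact Cooper--Howard formula in Lemma~\ref{teoHoward}, isolating the $j=1$ and $j=2$ summands and absorbing everything beyond into $\eta$. Setting $J:=\lfloor\frac{m+k}{k+1}\rfloor-1$ and pulling $2^{m-2}$ out of Lemma~\ref{teoHoward}, I would start from
\begin{equation*}
F_m^{(k)} = 2^{m-2}\Big(1 + \sum_{j=1}^{J} C_{m,j}\, 2^{-(k+1)j}\Big),
\end{equation*}
valid for $m\ge k+2$; for $2\le m\le k+1$ the sum is empty and $F_m^{(k)}=2^{m-2}$ by \eqref{Fibbo111}, which is the asserted identity with all correction terms equal to $0$. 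The key bookkeeping is that the $j$-th summand is present exactly when $J\ge j$: a one-line computation gives $J\ge 1\Leftrightarrow m>k+1$ and $J\ge 2\Leftrightarrow m>2(k+1)$, which is precisely the support of $\delta_1$ and of $\delta_2$. Thus the characteristic functions merely record whether the $j=1$ and $j=2$ terms occur.

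Next I would evaluate the two leading coefficients explicitly, using the stated binomial convention. One has $C_{m,1}=-\big(\binom{m-k}{1}-\binom{m-k-2}{-1}\big)=-(m-k)=k-m$, so the $j=1$ term equals $\delta_1(m)\frac{k-m}{2^{k+1}}$. For $j=2$ and $m>2(k+1)$ one has $C_{m,2}=\binom{m-2k}{2}-1$; writing $w:=m-2k=-z$ gives $C_{m,2}=\frac{w(w-1)}{2}-1=\frac{(w-2)(w+1)}{2}$, and since $(z-1)(z+2)=(w+1)(w-2)$ this is exactly $f(k,m)=\frac12(z-1)(z+2)$. Hence the $j=2$ term is $\delta_2(m)\frac{f(k,m)}{2^{2k+2}}$, as claimed. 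These two identifications reduce the lemma to estimating the tail $\eta(k,m):=\sum_{j=3}^{J} C_{m,j}\,2^{-(k+1)j}$.

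The remaining and principal step is this tail bound, and it is where the hypothesis $m<2^k$ is used. From $|C_{m,j}|\le\binom{m-jk}{j}+\binom{m-jk-2}{j-2}\le\binom{m}{j}+\binom{m}{j-2}$, together with $j\le J\le\frac{m-1}{k+1}<\frac{m}{3}$, which forces $\binom{m}{j-2}\le\binom{m}{j}$, I obtain the uniform estimate $|C_{m,j}|\le 2\binom{m}{j}\le 2\,m^j/j!$. Putting $t:=m/2^{k+1}$, the hypothesis $m<2^k$ yields $t<\tfrac12$, so that consecutive ratios of $t^j/j!$ for $j\ge 3$ are at most $t/4<\tfrac18$, and a geometric comparison gives
\begin{equation*}
|\eta(k,m)| \le 2\sum_{j\ge 3}\frac{t^j}{j!} \le \frac{t^3}{3}\cdot\frac{1}{1-t/4} < \frac{8t^3}{21} = \frac{8}{21}\cdot\frac{m^3}{2^{3k+3}} < \frac{4m^3}{2^{3k+3}}.
\end{equation*}

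I expect no serious obstacle beyond this estimate, which is comfortable because the target constant $4$ leaves a wide margin against the $8/21$ actually produced. The only points demanding care are the binomial convention at small or negative indices (so that $C_{m,1}$ and $C_{m,2}$ come out with the right sign and the boundary ranges of Example~\ref{exa:1} are reproduced) and the verification that the summation range $1\le j\le J$ matches the supports of $\delta_1$ and $\delta_2$; once these are checked, the geometric tail bound closes the argument.
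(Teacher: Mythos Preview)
Your derivation is correct and follows exactly the route the paper attributes to the original source: the paper does not give its own proof of this lemma but simply cites G\'omez and Luca \cite{Gomez}, noting that they ``derived from the Cooper and Howard's formula the following asymptotic expansion.'' Your argument---factoring $2^{m-2}$ out of Lemma~\ref{teoHoward}, identifying the $j=1,2$ coefficients with the displayed terms, matching the range $J\ge i$ to the support of $\delta_i$, and bounding the tail $\sum_{j\ge 3}$ geometrically via $|C_{m,j}|\le 2m^j/j!$ and $t=m/2^{k+1}<1/2$---is precisely this derivation, and all steps check (including the boundary case $2\le m\le k+1$ via \eqref{Fibbo111}).
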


\subsection{Linear forms in logarithms and continued fractions}

In order to prove our main result Theorem \ref{Main}, we need to use several times a Baker--type lower bound for a nonzero linear form in logarithms of algebraic numbers. There are many such 
in the literature  like that of Baker and W{\"u}stholz from \cite{bawu07}.  We use the one of Matveev from \cite{MatveevII}.  
Matveev \cite{MatveevII} proved the following theorem, which is one of our main tools in this paper.

\begin{thm}[Matveev]\label{Matveev11} Let $\gamma_1,\ldots,\gamma_t$ be positive real algebraic numbers in a real algebraic number field 
$\mathbb{K}$ of degree $D$, $b_1,\ldots,b_t$ be nonzero integers, and assume that
\begin{equation}
\label{eq:Lambda}
\Lambda:=\gamma_1^{b_1}\cdots\gamma_t^{b_t} - 1,
\end{equation}
is nonzero. Then
$$
\log |\Lambda| > -1.4\times 30^{t+3}\times t^{4.5}\times D^{2}(1+\log D)(1+\log B)A_1\cdots A_t,
$$
where
$$
B\geq\max\{|b_1|, \ldots, |b_t|\},
$$
and
$$A
_i \geq \max\{Dh(\gamma_i), |\log\gamma_i|, 0.16\},\qquad {\text{for all}}\qquad i=1,\ldots,t.
$$
\end{thm}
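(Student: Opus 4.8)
The final displayed statement is Theorem~\ref{Matveev11}, Matveev's lower bound for a nonzero linear form in logarithms. This is a celebrated and deep theorem of transcendental number theory whose full proof occupies a substantial paper; it is quoted here verbatim as an external tool and is not reproved in the present article. Accordingly, rather than attempting to reconstruct Matveev's argument in full, I outline the architecture of such a proof and indicate where the genuine difficulty lies.

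\textbf{Overall strategy.} The plan is to argue by contradiction: assume that $\Lambda=\gamma_1^{b_1}\cdots\gamma_t^{b_t}-1$ is nonzero yet \emph{smaller} in absolute value than the claimed bound, and derive an impossibility. The standard route is the \emph{auxiliary function method} of Gel'fond--Baker. First I would fix notation: work inside the number field $\mathbb{K}$ of degree $D$, set $B\ge\max_i|b_i|$, and let the $A_i$ bound the heights as stated. The heart of the construction is an auxiliary polynomial $P$ in $t$ variables with rational integer coefficients, chosen by a pigeonhole (Siegel's lemma) argument so that the analytic function
\[
\Phi(z)=P\bigl(\gamma_1^{z},\ldots,\gamma_{t-1}^{z},\gamma_t^{z}\bigr)
\]
(interpreting each $\gamma_i^{z}=\exp(z\log\gamma_i)$ with a fixed determination of the logarithms) vanishes to high order at many integer points $z$. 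The parameters controlling the degree of $P$ and the multiplicity of vanishing must be calibrated against the numerical constants $30^{t+3}$, $t^{4.5}$, $D^2(1+\log D)$, and $A_1\cdots A_t$ appearing in the conclusion; Matveev's sharpening over earlier work of Baker and W\"ustholz comes precisely from an optimized choice of these parameters and a clever use of the multiplicity.

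\textbf{Key steps, in order.} The argument would proceed through the classical five movements. (1) \emph{Construction:} apply Siegel's lemma to obtain a nontrivial integer polynomial $P$ whose associated $\Phi$ has controlled size and prescribed high-order zeros; the assumption that $|\Lambda|$ is very small is what forces the $\gamma_i^{b_i}$ to be near-multiplicatively-dependent and feeds into this step. (2) \emph{Analytic estimate:} bound $|\Phi|$ and its derivatives on a disc using the maximum modulus principle and the Schwarz lemma, exploiting the many zeros to show $\Phi$ is extremely small on a larger region. (3) \emph{Arithmetic (zero) estimate:} this is the crux---show that $\Phi$ cannot vanish to such high order at so many points unless $P$ is trivial, using a zero-multiplicity estimate (in the strong form, the Philippon--Nesterenko zero estimates on commutative group varieties, here the torus $\mathbb{G}_m^t$). (4) \emph{Contradiction via heights:} combine the analytic smallness with the fact that a nonzero algebraic number has absolute value bounded below by a function of its height (a Liouville-type inequality), to conclude that the relevant nonzero algebraic quantity is simultaneously too small and too large. (5) \emph{Extrapolation and optimization:} iterate the analytic--arithmetic comparison (the extrapolation or ``descent'' on the multiplicity) and optimize all parameters to extract the explicit constant $1.4\times 30^{t+3}$ and the precise dependence on $t$, $D$, $B$, and the $A_i$.

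\textbf{Main obstacle.} The decisive and most delicate step is (3), the zero estimate on the algebraic torus. Obtaining a bound that is both sharp enough in the parameter $t$ (to yield the $t^{4.5}$ factor rather than an exponential-in-$t$ loss) and uniform enough to survive the extrapolation requires the full strength of the commutative-group-variety zero estimates; a naive one-variable Schwarz-lemma count is far too weak. Closely coupled to this is the bookkeeping in step (5): tracking how the analytic and arithmetic inequalities interact across the iteration so that the accumulated constants collapse to the clean numerical form stated. In short, the linear-algebra construction (1) and the analytic bounds (2) are routine in spirit, while the multiplicity/zero-estimate input (3) together with the final parameter optimization (5) carry the entire weight of the theorem; this is exactly the content that Matveev supplies in \cite{MatveevII}, and which we invoke here as a black box rather than reprove.
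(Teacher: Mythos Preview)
Your assessment is correct: the paper does not prove this statement at all but simply quotes it as an external tool, citing Matveev's original article \cite{MatveevII}. Your outline of the Gel'fond--Baker architecture is a reasonable high-level sketch of how such theorems are established, and your explicit acknowledgment that this is invoked as a black box matches exactly what the paper does.
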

When $t=2$ and $\gamma_1, \gamma_2$ are positive and multiplicatively independent, we can use a result of Laurent, Mignotte and Nesterenko \cite{Laurent:1995}. Namely, let in this case $B_1, ~B_2$ be real numbers larger than $1$ such that
\begin{eqnarray*}
\log B_i\geq \max\left\{h(\gamma_i), \dfrac{|\log\gamma_i|}{D}, \dfrac{1}{D}\right\},\qquad {\text{\rm for}}\quad  i=1,2,
\end{eqnarray*}
and put
\begin{eqnarray*}
b^{\prime}:=\dfrac{|b_{1}|}{D\log B_2}+\dfrac{|b_2|}{D\log B_1}.
\end{eqnarray*}
Put
\begin{equation}
\label{eq:Gamma}
\Gamma:= b_1\log\gamma_1+b_2\log\gamma_2.
\end{equation}
We note that $\Gamma\neq0$ because $\gamma_{1} $and $\gamma_{2}$ are multiplicatively independent. The following result is Corollary $ 2 $ in \cite{Laurent:1995}.
\begin{thm}[Laurent, Mignotte, Nesterenko]\label{Matveev12}
With the above notations, assuming that $ \eta_{1}, \eta_{2}$ are positive and multiplicatively independent, then
\begin{eqnarray}
\log |\Gamma|> -24.34D^4\left(\max\left\{\log b^{\prime}+0.14, \dfrac{21}{D}, \dfrac{1}{2}\right\}\right)^{2}\log B_1\log B_2.
\end{eqnarray}
\end{thm}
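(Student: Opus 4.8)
The statement is a Baker-type lower bound for a linear form in exactly two logarithms, and the sharp shape of the estimate --- the constant $24.34$, the factor $D^4$, and the weighting of $b_1,b_2$ through $b'$ --- is the signature of Laurent's \emph{method of interpolation determinants}, so that is the route I would follow. Set $\mathbb{K}=\mathbb{Q}(\gamma_1,\gamma_2)$, which has degree $D$, and suppose for contradiction that $|\Gamma|$ is strictly smaller than the quantity on the right-hand side of the asserted inequality. Since $\gamma_1,\gamma_2$ are multiplicatively independent we already know from \eqref{eq:Gamma} that $\Gamma\neq 0$; the assumed smallness of $|\Gamma|=|b_1\log\gamma_1+b_2\log\gamma_2|$ says that $\gamma_1^{b_1}\gamma_2^{b_2}$ is extraordinarily close to $1$, equivalently that the entire functions $z\mapsto\gamma_1^{z}$ and $z\mapsto\gamma_2^{z}$ are nearly linearly dependent along the arithmetic data attached to $b_1,b_2$. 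The whole proof quantifies this near-dependence analytically and then collides it with an arithmetic rigidity statement.

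First I would fix positive integer parameters $L$ and ranges $R,S$ with $RS=L$, to be optimised only at the very end in terms of $D$, $h(\gamma_1)$, $h(\gamma_2)$, $b'$, $\log B_1$ and $\log B_2$, and I would form an $L\times L$ \emph{interpolation determinant}
$$
\Delta=\det\Big(\,w_j^{\,\sigma_i}\,\gamma_1^{\,\tau_{i,1} w_j}\,\gamma_2^{\,\tau_{i,2} w_j}\,\Big)_{1\le i,j\le L},
$$
whose columns are indexed by a block of integer evaluation points $w_j$ (arising from the progression governed by $b_1,b_2$) and whose rows are indexed by exponent triples $(\sigma_i,\tau_{i,1},\tau_{i,2})$ cut out by $L$. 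The point of such a determinant is that each entry is the value at an integer point of one of the chosen monomial-exponential functions, and each such value lies in $\mathbb{K}$; after clearing denominators by an appropriate power of the leading coefficients of the minimal polynomials of $\gamma_1,\gamma_2$, the scaled quantity $\widetilde{\Delta}$ becomes an algebraic integer of $\mathbb{K}$.

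The heart of the argument is the confrontation of two bounds for $\Delta$. For the analytic upper bound I would apply a Schwarz-type (maximum modulus) estimate on a disc containing the points $w_j$: because $\gamma_1^{b_1}\gamma_2^{b_2}\approx 1$ the rows of the matrix are nearly proportional, so the determinant is forced to be very small, and a careful accounting yields $\log|\Delta|\le c_1 L^2 + c_2\,\Theta\,\log|\Gamma|$, where $\Theta>0$ grows with the parameters and the second term is large and negative precisely because $|\Gamma|$ is assumed minuscule. For the arithmetic lower bound I would use that $\widetilde{\Delta}$ is a nonzero algebraic integer, so its field norm is a nonzero rational integer and hence $|N_{\mathbb{K}/\mathbb{Q}}(\widetilde{\Delta})|\ge 1$; bounding the remaining conjugates from above via the sizes $h(\gamma_1),h(\gamma_2)$ (a Liouville inequality) then produces $\log|\Delta|\ge -c_3\, D L\,(h(\gamma_1)+h(\gamma_2)+\cdots)$. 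Crucially, the nonvanishing $\widetilde{\Delta}\neq 0$ must be established separately by a zero estimate, and this is exactly the place where the multiplicative independence of $\gamma_1,\gamma_2$ is used in an essential way.

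Comparing the two bounds forces a relation between $|\Gamma|$ and the parameters, and the remaining work is to optimise $L,R,S$ so that the analytic gain beats the arithmetic cost. Here $\log b'$ measures the logarithmic length of the block of evaluation points, $\log B_1\log B_2$ emerges from the product $h(\gamma_1)h(\gamma_2)$ controlling the conjugate sizes, and the two powers of $D$ coming from the norm (one for the number of conjugates, one from the height normalisation) combine with the $D^2$ already present to give $D^4$; the term $\max\{\log b'+0.14,\,21/D,\,1/2\}$ is the output of this optimisation once the degenerate regimes (very small $b'$, or small $D$) are treated separately. I expect the genuinely hard parts to be twofold: first, the zero estimate guaranteeing $\widetilde{\Delta}\neq 0$, and second, the delicate bookkeeping in the parameter optimisation needed to reach the explicit constant $24.34$ rather than a merely effective but larger one. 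Everything else is the standard analytic-versus-arithmetic dichotomy of transcendence theory.
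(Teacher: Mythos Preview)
The paper does not prove this theorem at all: it simply quotes it as Corollary~2 of Laurent--Mignotte--Nesterenko \cite{Laurent:1995} and then applies it as a black box. So there is no ``paper's own proof'' to compare your proposal against.

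That said, your sketch is a faithful outline of the method actually used in \cite{Laurent:1995}: interpolation determinants built from monomial-exponential functions, an analytic upper bound via a Schwarz lemma exploiting the smallness of $|\Gamma|$, an arithmetic lower bound via a Liouville inequality on the algebraic integer obtained after clearing denominators, a zero estimate (using multiplicative independence) to guarantee nonvanishing, and a final optimisation of the free parameters. Your identification of the two genuinely hard steps --- the zero estimate and the bookkeeping needed to extract the explicit constant $24.34$ --- is accurate. For the purposes of this paper, however, none of this is needed; the authors treat Theorem~\ref{Matveev12} purely as an input, and a one-line citation would suffice.
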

Note that with $\Gamma$ given by \eqref{eq:Gamma}, we have $e^{\Gamma}-1=\Lambda$, where $\Lambda$ is given by \eqref{eq:Lambda} in case $t=2$, which explains the connection between 
Theorems \ref{Matveev11} and \ref{Matveev12}.

During the course of our calculations, we get some upper bounds on our variables which are too large, thus we need to reduce them. To do so, we use some results from the theory of continued fractions. Specifically, for a nonhomogeneous linear form in two integer variables, we use a slight variation of a result due to Dujella and Peth{\H o} \cite{dujella98}, which  itself is a generalization of a result of Baker and Davenport \cite{BD69}.

For a real number $X$, we write  $||X||:= \min\{|X-n|: n\in\mathbb{Z}\}$ for the distance from $X$ to the nearest integer.
\begin{lem}[Dujella, Peth\H o]\label{Dujjella}
Let $M$ be a positive integer, $p/q$ be a convergent of the continued fraction of the irrational number $\tau$ such that $q>6M$, and  $A,B,\mu$ be some real numbers with $A>0$ and $B>1$. Let further 
$\varepsilon: = ||\mu q||-M||\tau q||$. If $ \varepsilon > 0 $, then there is no solution to the inequality
$$
0<|u\tau-v+\mu|<AB^{-w},
$$
in positive integers $u,v$ and $w$ with
$$ 
u\le M \quad {\text{and}}\quad w\ge \dfrac{\log(Aq/\varepsilon)}{\log B}.
$$
\end{lem}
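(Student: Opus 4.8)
The plan is to argue by contradiction: I would suppose positive integers $u,v,w$ satisfy $u\le M$, $w\ge \log(Aq/\varepsilon)/\log B$ and $0<|u\tau-v+\mu|<AB^{-w}$, and then derive a lower bound for $|u\tau-v+\mu|$ that is incompatible with the upper bound. The heart of the matter is to prove the clean inequality $|u\tau-v+\mu|\ge \varepsilon/q$ for every $u\le M$; once this is in hand, the hypothesis on $w$ produces the contradiction immediately.

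First I would clear denominators using the arithmetic of the convergent $p/q$. Writing $\theta:=q\tau-p$, the best--approximation property of continued--fraction convergents gives $|\theta|=||\tau q||$, together with the classical estimate $||\tau q||<1/q$. Multiplying $u\tau-v+\mu$ by $q$ and substituting $q\tau=p+\theta$ yields
$$q(u\tau-v+\mu)=(up-vq)+u\theta+\mu q.$$
Here $N:=up-vq$ is an integer; writing also $\mu q=r+\delta$ with $r\in\bbZ$ the nearest integer and $|\delta|=||\mu q||$, I obtain
$$q(u\tau-v+\mu)=(N+r)+(\delta+u\theta),$$
an integer $N+r$ plus a small remainder $\delta+u\theta$.

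The core of the argument is then a two--case lower bound on $|q(u\tau-v+\mu)|$. Using $u\le M$ and the two displayed estimates, the remainder satisfies $|\delta+u\theta|\le ||\mu q||+M||\tau q||$, while $q>6M$ together with $||\tau q||<1/q$ forces $M||\tau q||<1/6$, so $|\delta+u\theta|<1$. If $N+r\ne 0$, then $|N+r|\ge 1$ and the reverse triangle inequality gives $|q(u\tau-v+\mu)|\ge 1-||\mu q||-M||\tau q||\ge ||\mu q||-M||\tau q||=\varepsilon$, where the middle inequality uses only $||\mu q||\le 1/2$. If instead $N+r=0$, then $|q(u\tau-v+\mu)|=|\delta+u\theta|\ge |\delta|-u|\theta|\ge ||\mu q||-M||\tau q||=\varepsilon$, which is genuinely positive precisely because $\varepsilon>0$. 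In both cases $|u\tau-v+\mu|\ge \varepsilon/q$.

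To close, I would compare this with the assumed upper bound. Since $B>1$, the hypothesis $w\ge \log(Aq/\varepsilon)/\log B$ gives $B^{w}\ge Aq/\varepsilon$, hence $AB^{-w}\le \varepsilon/q\le |u\tau-v+\mu|$, contradicting $|u\tau-v+\mu|<AB^{-w}$. I expect the only delicate point to be the case $N+r\ne 0$: one must ensure that the total fractional contribution $||\mu q||+M||\tau q||$ stays strictly below $1$, so that the nonzero integer $N+r$ cannot be cancelled by the remainder. This is exactly where the hypothesis $q>6M$ is used, through the convergent bound $||\tau q||<1/q$; everything else is bookkeeping with the triangle inequality and the definition of $\varepsilon$.
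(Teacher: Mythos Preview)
Your argument is correct and is essentially the standard proof of the Dujella--Peth\H{o} reduction lemma. Note, however, that the paper does not supply its own proof of this statement: it is quoted as a known result from \cite{dujella98} (a slight variant of Baker--Davenport), so there is no in-paper proof to compare against. Your write-up matches the original argument closely---clear denominators via the convergent, split into the integer part $N+r$ and the small remainder $\delta+u\theta$, and use $q>6M$ together with $\|\tau q\|<1/q$ to keep the remainder below $1$ so the nonzero-integer case survives the reverse triangle inequality.
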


The above lemma cannot be applied when $\mu=0$ (since then $\varepsilon<0$). In this case, we use the following criterion of Legendre.

\begin{lem}[Legendre]
\label{lem:legendre}
Let $\tau$ be real number and $x,y$ integers such that
\begin{equation}
\label{eq:continuedfraction}
\left|\tau-\frac{x}{y}\right|<\frac{1}{2y^2}.
\end{equation}
Then $x/y=p_k/q_k$ is a convergent of $\tau$. Furthermore, 
\begin{equation}
\label{eq:continuedfraction1}
\left|\tau-\frac{x}{y}\right|\ge \frac{1}{(a_{k+1}+2)y^2}.
\end{equation}
\end{lem}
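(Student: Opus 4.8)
The plan is to derive both assertions from the elementary theory of regular continued fractions; there is nothing to invent here, only standard facts to organize. First I would normalize the data: replacing $(x,y)$ by $(-x,-y)$ if needed and then by the fraction in lowest terms only shrinks the denominator and hence only strengthens \eqref{eq:continuedfraction}, so I may assume $\gcd(x,y)=1$ and $y\ge 1$; moreover if $\tau=x/y$ there is nothing to prove (the assertion involving $a_{k+1}$ being then vacuous), so I may assume $\tau\ne x/y$. Throughout, write $\tau=[a_0;a_1,a_2,\dots]$ for the regular continued fraction of $\tau$ and $p_j/q_j$ for its convergents, with the usual recurrences $q_j=a_jq_{j-1}+q_{j-2}$ and the determinant identity $p_jq_{j-1}-p_{j-1}q_j=(-1)^{j-1}$.

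For the first assertion I would run Legendre's classical argument. I expand the rational number $x/y$ as a finite continued fraction $[b_0;b_1,\dots,b_n]$, using the standard identity relating the two finite representations of a rational to arrange the parity of $n$ freely; let $P_j/Q_j$ be its convergents, so that $P_n/Q_n=x/y$ and, since $\gcd(x,y)=1$, $Q_n=y$. Defining the real number $\theta$ by $\tau=(\theta P_n+P_{n-1})/(\theta Q_n+Q_{n-1})$, a one-line computation using $P_nQ_{n-1}-P_{n-1}Q_n=(-1)^{n-1}$ gives
\[
\tau-\frac{x}{y}=\frac{(-1)^n}{Q_n(\theta Q_n+Q_{n-1})}.
\]
I then choose the parity of $n$ so that $(-1)^n$ has the same sign as $\tau-x/y$; this forces $\theta Q_n+Q_{n-1}>0$, and feeding it back together with \eqref{eq:continuedfraction} and $Q_{n-1}\le Q_n$ gives $\theta Q_n+Q_{n-1}>2Q_n$, hence $\theta>1$. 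Expanding $\theta$ into its own regular continued fraction $[c_0;c_1,\dots]$, with $c_0\ge 1$ since $\theta>1$, shows that $\tau=[b_0;\dots,b_n,c_0,c_1,\dots]$ is a valid regular continued fraction, so by uniqueness $b_j=a_j$ for $0\le j\le n$ and $x/y=P_n/Q_n=p_n/q_n$; taking $k:=n$ settles the first claim.

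For the second assertion I would pass to the complete-quotient representation $\tau=[a_0;a_1,\dots,a_k,\alpha_{k+1}]$, where $\alpha_{k+1}=[a_{k+1};a_{k+2},\dots]$ is the $(k+1)$st complete quotient; it exists precisely because $\tau\ne p_k/q_k$. The same computation as above, now with $\alpha_{k+1}$ in the role of $\theta$, yields
\[
\left|\tau-\frac{p_k}{q_k}\right|=\frac{1}{q_k(\alpha_{k+1}q_k+q_{k-1})}.
\]
Since $a_{k+1}\le\alpha_{k+1}<a_{k+1}+1$ and $q_{k-1}\le q_k=y$, we obtain $q_k(\alpha_{k+1}q_k+q_{k-1})<(a_{k+1}+2)y^2$, which is exactly \eqref{eq:continuedfraction1}.

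I expect the only genuinely delicate step to be the parity choice in the first part: one has to select the finite representation of $x/y$ whose penultimate convergent $P_{n-1}/Q_{n-1}$ lies on the same side of $x/y$ as $\tau$, and it is precisely this that makes $\theta Q_n+Q_{n-1}$ positive and hence upgrades a bound on $|\theta Q_n+Q_{n-1}|$ to the strict inequality $\theta>1$. Everything else is routine manipulation of the continued fraction recurrences. If one prefers to avoid this bookkeeping, the first assertion is literally Legendre's theorem and may be quoted from any standard reference on continued fractions, leaving only \eqref{eq:continuedfraction1}, which the complete-quotient identity above supplies directly.
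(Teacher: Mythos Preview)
Your proof is correct and follows the classical route to Legendre's criterion: write $x/y$ as a finite continued fraction with a chosen parity, solve for the M\"obius parameter $\theta$, and use the hypothesis to force $\theta>1$; then the complete-quotient identity gives the lower bound \eqref{eq:continuedfraction1}. The only point worth noting is that the paper does not prove this lemma at all---it is stated without proof as the well-known criterion of Legendre---so there is no ``paper's approach'' to compare against. Your write-up supplies exactly the standard textbook argument one would cite here.
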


Finally, the following lemma is also useful. It is Lemma 7 in \cite{guzmanluca}. 

\begin{lem}[G\'uzman, Luca]
\label{gl}
If $m\geqslant 1$, $T>(4m^2)^m$  and $T>x/(\log x)^m$, then
$$
x<2^mT(\log T)^m.
$$
\end{lem}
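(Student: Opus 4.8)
# Proof Proposal for Lemma \ref{gl}

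The plan is to prove the stated inequality $x<2^mT(\log T)^m$ directly from the two hypotheses $T>(4m^2)^m$ and $T>x/(\log x)^m$ by bootstrapping: the second hypothesis already gives $x<T(\log x)^m$, so the whole task is to replace the unknown $\log x$ on the right by a controllable multiple of $\log T$. First I would dispose of a trivial case: if $x\le T$ there is nothing to prove since $2^mT(\log T)^m\ge T$ whenever $\log T\ge 1$, which follows from $T>(4m^2)^m\ge 4^m\ge e$ for $m\ge 1$. So I may assume $x>T>1$, and in particular $\log x>\log T>0$, which will let me take logarithms and divide freely.

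The main step is to bound $\log x$ in terms of $\log T$. Starting from $x<T(\log x)^m$ and taking logarithms gives
\[
\log x<\log T+m\log\log x.
\]
The obstacle here is that $\log\log x$ is sublogarithmic in $x$, so it should be absorbable, but I must quantify this without circularity. I would use the elementary inequality $\log u\le u/(e)\le u/2$ valid for all $u>0$ (more crudely $\log u<u/2$ for $u$ large), applied to $u=\log x$, to obtain $m\log\log x<(m/2)\cdot(\log x)/C$ for a suitable constant once $\log x$ is large enough; the hypothesis $T>(4m^2)^m$ is precisely what guarantees $\log x\ge\log T>m\log(4m^2)$ is large relative to $m$. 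Concretely, I expect to show $m\log\log x\le\tfrac12\log x$, which rearranges to $\log x\le 2\log T$. Establishing this clean factor of $2$ is the crux: I would verify that the inequality $m\log\log x\le\tfrac12\log x$ holds whenever $\log x\ge\log T>(4m^2)^m$'s logarithm, checking the threshold by a short monotonicity argument in $x$.

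Once $\log x\le 2\log T$ is in hand, I substitute back into the second hypothesis:
\[
x<T(\log x)^m\le T(2\log T)^m=2^mT(\log T)^m,
\]
which is exactly the claimed bound. The whole argument is thus a two-line bootstrap once the auxiliary estimate $\log x<2\log T$ is secured, and that estimate is where the quantitative hypothesis $T>(4m^2)^m$ does its work. The hardest part is purely this calibration of constants: making the sublogarithmic term $m\log\log x$ provably at most half of $\log x$ using only the given lower bound on $T$, rather than an unspecified ``$x$ large.'' I would handle it by reducing to the worst case $x=T$ of the threshold and checking the single inequality $m\log\log T\le\tfrac12\log T$ at $T=(4m^2)^m$, then invoking monotonicity for larger $T$.
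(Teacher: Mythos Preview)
The paper does not supply a proof of this lemma; it is simply quoted as Lemma~7 of \cite{guzmanluca}. So there is nothing in the paper to compare your argument against, and I assess your outline on its own merits.

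Your bootstrap strategy is correct and the final substitution $x<T(\log x)^m\le T(2\log T)^m=2^mT(\log T)^m$ is exactly what is needed. The gap is in the calibration step. You propose to verify $m\log\log x\le\tfrac12\log x$ for all $x\ge T$ by checking it at the endpoint $x=T=(4m^2)^m$ and then invoking monotonicity. But for $m=2$ this endpoint check reads $2\log(8\log 2)\le 4\log 2$, i.e.\ roughly $3.43\le 2.77$, which is false; the auxiliary inequality genuinely fails on part of the range $x\ge T$ for every $m\ge 2$, so neither the endpoint check nor the monotonicity claim (the function $u\mapsto m\log u-\tfrac12 u$ is not monotone on $u\ge\log T$) goes through as stated.

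The repair is to argue by contradiction, so that the auxiliary inequality is only needed at the larger threshold $\log x\ge 2\log T$ rather than $\log x\ge\log T$. If $\log x\ge 2\log T$, then $\log x<\log T+m\log\log x$ forces $m\log\log x>\tfrac12\log x$. Setting $u:=\log x\ge 2\log T>2m\log(4m^2)>2m$, the function $u\mapsto u/(2m)-\log u$ is increasing on this range, and its value at $u_0=2m\log(4m^2)$ equals $\log(2m)-\log\log(4m^2)$, which is nonnegative because $e^{2m}\ge 4m^2$ for all $m\ge 1$. Hence $\log u\le u/(2m)$, contradicting $m\log u>u/2$. This yields $\log x<2\log T$, and your concluding line then finishes the proof.
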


\subsection{Pell equations and Dickson polynomials}
\label{subs:Pell}

Let $d\ge 2$ be squarefree. We put $\delta:=x_1+{\sqrt{x_1^2-\epsilon}}$ for the minimal positive integer $x_1$ such that
$$
x_1^2-dy_1^2=\epsilon,\qquad \epsilon\in \{\pm 1\}
$$
for some positive integer $y_1$. Then,
$$
x_n+{\sqrt{d}}y_n=\delta^n\qquad {\text{\rm and}}\qquad x_n-{\sqrt{d}{y_n}}=\eta^n,\qquad {\text{\rm where}}\qquad \eta:=\epsilon \delta^{-1}.
$$
From the above, we get
\begin{equation}
\label{eq:Pellsol1}
2x_n=\delta^n+(\epsilon \delta^{-1})^n\qquad {\text{\rm for~all}}\qquad n\ge 1.
\end{equation}
There is a formula expressing $2x_{n}$ in terms of $2x_1$ by means of the Dickson polynomial $D_{n}(2x_1,\epsilon)$, where 
$$
D_n(x,\nu)=\sum_{i=0}^{\lfloor n/2\rfloor} \frac{n}{n-i} \binom{n-i}{i}(-\nu)^i x^{n-2i}.
$$
These polynomials appear naturally in many number theory problems and results, most notably in a result of Bilu and Tichy \cite{BiluTichy} concerning 
polynomials $f(X), g(X)\in {\mathbb Z}[X]$ such that the Diophantine equation $f(x)=g(y)$ has infinitely many integer solutions $(x,y)$. 

\begin{example}
\label{exa:2}
\begin{itemize}
\item[(i)] $n=2$. We have
$$
2x_2=\sum_{i=0}^1 \frac{2}{2-i}\binom{2-i}{i} (-\epsilon)^i (2x_1)^{2-2i}=4x_1^2-2\epsilon,\qquad {\text{so}}\qquad x_2=2x_1^2-\epsilon.
$$
\item[(ii)] $n=3$. We have
$$
2x_3=\sum_{i=0}^1 \frac{3}{3-i}\binom{3-i}{i} (-\epsilon)^i (2x_1)^{3-2i}=(2x_1)^3-3\epsilon (2x_1),\qquad {\text{so}}\qquad x_3=4x_1^3-3\epsilon x_1.
$$
\item[(iii)] $n\ge 4$. We have 
\begin{eqnarray*}
2x_n & = & \sum_{i=0}^{\lfloor n/2\rfloor} \frac{n}{n-i}\binom{n-i}{i}(-\epsilon)^i (2x_1)^{n-2i}\\
& = & (2x_1)^n-n\epsilon (2x_1)^{n-2}+\frac{n(n-3)}{2} (2x_1)^{n-4}+\sum_{i\ge 3}^{\lfloor n/2\rfloor} 
\frac{n(-\epsilon)^i}{n-i}\binom{n-i}{i}  (2x_1)^{n-2i}.
\end{eqnarray*}
\end{itemize}
\end{example}

The following variation of a result of Luca \cite{Luca} is useful. Let $P(m)$ denote the largest prime factor of the positive integer $m$.

\begin{lem}
\label{lem:Luc}
If $P(x_n)\le 5$, then either $n=1$, or $n=2$ and $x_2\in \{3,9,243\}$.
\end{lem}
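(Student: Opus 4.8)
The plan is to reduce $P(x_n)\le 5$ to a statement about Pell equations whose $x$--coordinate is $\{2,3,5\}$--smooth and then invoke the underlying result of Luca \cite{Luca}. First I would recall that $2x_n = \delta^n + (\epsilon\delta^{-1})^n$, so $x_n \ge (\delta^n - 1)/2 \ge (\delta^{n}-1)/2$ grows at least like $\delta^n/2$ with $\delta \ge 1+\sqrt 2$; hence for $n$ large $x_n$ is huge, and a smooth value would force $x_n$ to be a product of bounded primes raised to large exponents. The cleanest route is: apply the cited theorem of Luca on $S$--smooth values of Lucas-type sequences (or a direct $S$--unit / linear forms in logarithms argument) to the binary recurrent sequence $(x_n)_{n\ge 1}$, which satisfies $x_{n+1} = 2x_1 x_n - \epsilon x_{n-1}$, to conclude that $P(x_n)\le 5$ can only happen for $n$ in a small explicit range, and then handle those cases by hand using the Dickson polynomial expressions from Example~\ref{exa:2}.

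The key steps in order: (1) Observe that $(x_n)$ is a nondegenerate binary recurrence (its characteristic roots $\delta, \epsilon\delta^{-1}$ have ratio $\delta^2/\epsilon$ which is not a root of unity since $\delta>1$), so by the primitive divisor theorem / results on smooth values of such sequences, there is an absolute bound $n\le n_0$ beyond which $x_n$ has a prime factor exceeding $5$. (2) For $n=1$: here $x_1$ itself may be any smooth number, which is exactly the first alternative of the conclusion, so nothing to prove. (3) For $n=2$: use $x_2 = 2x_1^2-\epsilon$ from Example~\ref{exa:2}(i); setting $x_2 = 2^a 3^b 5^c$ and solving $2x_1^2 = x_2 + \epsilon$ over the allowed smooth $x_2$ forces, after a finite check (this is where the hypothesis that $x_1$ is minimal and $d$ squarefree is used to pin down $x_1$), $x_2\in\{3,9,243\}$, corresponding to $(x_1,\epsilon)$ small. (4) For $3\le n\le n_0$: use $x_3 = 4x_1^3-3\epsilon x_1 = x_1(4x_1^2-3\epsilon)$ and the analogous factored forms for larger $n$; since $x_n$ has a factor $x_1$ (for odd $n$) or is divisible by $x_2 \mid x_4$ etc., and the cofactors $4x_1^2-3\epsilon$ etc. are $>5$-rough for all but finitely many $x_1$, a short computation eliminates these. (5) Conclude that the only surviving cases are $n=1$ and $n=2$ with $x_2\in\{3,9,243\}$.

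The main obstacle I expect is making step (1) genuinely effective, i.e. getting an explicit $n_0$: the general "smooth values of binary recurrences" results are often ineffective or give astronomically large bounds. I would get around this by the standard $S$--unit trick specialized to $S=\{2,3,5\}$: write $x_n = u+v$ with $u = \delta^n/2$, $v = (\epsilon\delta^{-1})^n/2$, so that $x_n/u - 1 = v/u$ is tiny, and if $x_n = 2^{a_1}3^{a_2}5^{a_3}$ then $2^{a_1}3^{a_2}5^{a_3}\delta^{-n}2 - 1$ is a small linear form in the logarithms of $2,3,5,\delta$; Matveev's Theorem~\ref{Matveev11} bounds $|v/u|$ from below, while the Pell structure bounds it from above by $\delta^{-2n}$, and comparing gives $n \ll (\log\delta)\cdot(\text{polynomial in }\log\delta)$, which combined with the elementary lower bound $\delta^{n} \ll x_n \le (\text{largest smooth number with that many digits})$ — in fact just $n\log\delta \le \log x_n$ and $x_n$ being smooth forces the exponents $a_i$ to be comparable to $n\log\delta$ — yields an absolute bound on $n$. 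After that the finitely many cases are routine. An alternative, cleaner obstacle-avoidance is simply to quote \cite{Luca} as a black box exactly as phrased ("a result of Luca"), in which case the only real work is steps (2)–(4), the explicit treatment of $n\le 3$, and the bulk of the difficulty collapses to the Diophantine analysis of $2x_1^2-\epsilon$ being $\{2,3,5\}$--smooth.
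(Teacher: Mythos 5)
Your overall shape (bound $n$ absolutely, then analyse the Dickson polynomial expressions case by case) matches the paper, but two of your steps would not go through as described. First, the bound on $n$. Your Matveev/$S$--unit argument compares an upper bound $|\Lambda|\ll\delta^{-2n}$ with a lower bound of the shape $\exp\bigl(-C(1+\log B)\log\delta\bigr)$ where $B\ge\max_i a_i\approx n\log\delta/\log 2$; cancelling $\log\delta$ leaves $n\ll 1+\log n+\log\log\delta$, which is \emph{not} an absolute bound on $n$ — it degrades as $\delta$ grows. Quoting \cite{Luca} as a black box does not rescue this either, since (as the paper notes) that result only treats $\epsilon=+1$, and the whole point of the lemma is to cover $\epsilon=-1$ as well. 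The paper's mechanism is different and decisive: $x_n=y_{2n}/y_n$ with $y_m=(\delta^m-\eta^m)/(\delta-\eta)$, so by Carmichael's Primitive Divisor Theorem, for $n\ge 7$ the number $x_n$ carries a primitive prime of $y_{2n}$, which is $\equiv\pm1\pmod{2n}$ and hence $\ge 2n-1>5$. That gives $n\le 6$ outright, uniformly in $\delta$. You mention the primitive divisor theorem only in passing, alongside ineffective smoothness results, without the key observation that $x_n\mid y_{2n}$.

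Second, your case $n=2$ is not a ``finite check.'' One must solve $2x_1^2+1=2^{a}3^{b}5^{c}$ in full. The paper first kills the prime factors $2$ and $5$ by quadratic residues ($\left(\tfrac{2}{5}\right)=-1$, and $2x^2+1$ is odd), reducing to $2x^2+1=3^{b}$, i.e.\ $x^2=(3^{b}-1)/(3-1)$; this has infinitely many candidate exponents $b$, and concluding $b\in\{1,2,5\}$ (whence $x_2\in\{3,9,243\}$) requires Ljunggren's theorem on $(x^n-1)/(x-1)=y^q$ — a genuinely nontrivial Diophantine input your sketch omits. Similarly, your claim that the cofactors $4x^2\pm3$ (for $n\in\{3,6\}$) and $2x^2-1$ (for $n\in\{2,4\}$) are $5$--rough ``for all but finitely many $x_1$'' understates what is true and needed: they are coprime to $2$ and $5$ (and, in the first case, not divisible by $9$) for \emph{every} $x$, by parity and the Legendre symbol computations $\left(\tfrac{\pm3}{5}\right)=\left(\tfrac{2}{3}\right)=\left(\tfrac{2}{5}\right)=-1$; this is a congruence argument, not an asymptotic one, and it is what forces $4x^2\pm3=3$ (impossible) and eliminates $2x^2-1$ entirely. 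Without the primitive-divisor reduction to $n\le 6$, the quadratic-residue analysis, and Ljunggren, the proof does not close.
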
 

\begin{proof}
In \cite{Luca} it was shown that if $\varepsilon=1$ and $P(x_n)\le 5$, then $n=1$. We give here a proof for both cases $\epsilon\in \{\pm 1\}$. Since $x_n=y_{2n}/y_n$, where 
$y_m=(\delta^m-\eta^m)/(\delta-\eta)$, it follows, by Carmichael's Primitive Divisor Theorem \cite{Carm}, that if $n\ge 7$, then $x_n$ has  a prime factor which is primitive for $y_{2n}$. In particular, this prime is
$\ge 2n-1>5$. Thus, $n\le 6$. Assume next that $n>1$. If $n\in \{3,6\}$, then $x_n$ is of the form $x(4x^2\pm 3)$, where $x=x_{\ell}$ with $\ell=n/3\in \{1,2\}$. The factor $4x^2\pm 3$ is larger than $1$ 
(since $x_n>x_{\ell}$)  odd (hence, coprime to $2$), not a multiple of $9$, and coprime to $5$ since ${\displaystyle{\left(\frac{\pm 3}{5}\right)=-1}}$. Thus, the only possibility is $4x^2\pm 3=3$, equation which does not have a positive integer solution $x$. If $n\in \{2,4\}$, then $x_n=2x^2\pm 1$, where $x=x_{\ell}$ and $\ell=n/2\in \{1,2\}$. Further, if $\ell=2$ only the case with the $-1$ on the right is possible. The expression $2x^2-1$ is odd, and coprime to both $3$ and $5$ since ${\displaystyle{\left(\frac{2}{3}\right)=\left(\frac{2}{5}\right)=-1}}$, so the case $x_n=2x_{\ell}^2-1$ is not possible. Finally, if $x_n=2x_{\ell}^2+1$, then $n=2,~\ell=1$. Further, 
$2x^2+1$ is coprime to $2$ and $5$ so we must have $2x^2+1=3^b$ for some exponent $b$. Thus, $x^2=(3^b-1)/(3-1)$, and the only solutions are $b\in \{1,2,5\}$ by a result of Ljunggren \cite{Lju}. 
\end{proof}

Since none of $3,~9,~243$ are of the form $F_m^{(k)}$ for any $m\ge 1,~k\ge 4$, for our practical purpose we will use the implication that if $x_n=F_m^{(k)}$ and $P(x_n)\le 5$, then $n=1$.
e of $3,~9,~243$ are of the form $F_m^{(k)}$ for any $m\ge 1,~k\ge 4$, for our practical purpose we will use the implication that if $x_n=F_m^{(k)}$ and $P(x_n)\le 5$, then $n=1$.

\section{A small linear form in logarithms}
We assume that $ (x_1, y_1) $ is the fundamental solution of the Pell equation \eqref{Pelleqn}. As in Subsection \ref{subs:Pell}, we set
$$
x_1^2-dy_1^2=:\epsilon, \qquad \epsilon \in\{\pm 1\},
$$
and put
$$
\delta:=x_1+{\sqrt{d}}y_1\qquad {\text{\rm and  }}\qquad \eta:=x_1-{\sqrt{d}}y_1 =\epsilon \delta^{-1}.
$$
From \eqref{Pellsoln} (or \eqref{eq:Pellsol1}), we get
\begin{equation}\label{Pellxterm}
x_n=\dfrac{1}{2}\left(\delta^{n}+\eta^{n}\right).
\end{equation}
Since $ \delta\geq 1+\sqrt{2}>2>\alpha$, it follows that the estimate
\begin{eqnarray}\label{Pellenq}
\dfrac{\delta^{n}}{\alpha^2}\leq x_{n}<\delta^n\quad {\text{\rm holds for all}} \quad n\geq 1.
\end{eqnarray}
We now assume, as in the hypothesis of Theorem \ref{Main}, that $(n_1,m_1)$ and $(n_2,m_2)$ are pairs of positive integers with $n_1<n_2$, $2\le m_1<m_2$ and
$$
x_{n_1}=F_{m_{1}}^{(k)} \qquad{\text{\rm  and}} \qquad x_{n_2}=F_{m_2}^{(k)}.
$$
By setting $(n,m)=(n_j,m_j)$ for $ j\in\{1,2\}$ and using the inequalities \eqref{Fib12} and \eqref{Pellenq}, we get that
\begin{equation}
\label{bdding22}
\alpha^{m-2}\leq F_{m}^{(k)}=x_{n}< \delta^{n} \qquad {\text{\rm  and}}\qquad \dfrac{\delta^{n}}{\alpha^2}\leq x_{n}= F_{m}^{(k)}\leq \alpha^{m-1}.
\end{equation}
Hence, 
\begin{equation}
\label{eq:uuu}
nc_1\log\delta \leq m+1 \leq nc_1\log\delta +3, \qquad c_1 :=1/\log\alpha.
\end{equation}
Next, by using \eqref{Binet} and \eqref{Pellxterm}, we get
$$
\dfrac{1}{2}\left(\delta^{n}+\eta^{n}\right) = f_{k}(\alpha)\alpha^{m-1}+ (F_{m}^{(k)} -f_{k}(\alpha)\alpha^{m-1}),
$$
so
$$
\delta^{n}(2f_{k}(\alpha))^{-1}\alpha^{-(m-1)}-1=\dfrac{-\eta^n}{2f_{k}(\alpha)\alpha^{m-1}}+\dfrac{(F_{m}^{(k)} -f_{k}(\alpha)\alpha^{m-1})}{f_{k}(\alpha)\alpha^{m-1}}.
$$
Hence, by using \eqref{approxgap} and Lemma \ref{fala5}(i), we have
\begin{equation}
\label{bbding1}
|\delta^{n}(2f_{k}(\alpha))^{-1}\alpha^{-(m-1)}-1|\leq \dfrac{1}{\alpha^{m-1}\delta^{n}}+\dfrac{1}{\alpha^{m-1}}<\dfrac{1.5}{\alpha^{m-1}}.
\end{equation}
In the above, we have used the facts that $1/f_{k}(\alpha)< 2$, $|F_{m}^{(k)} -f_{k}(\alpha)\alpha^{m-1}|<1/2$, $|\eta|=\delta^{-1}$, as well as the fact that $\delta>2$. 
We let $\Lambda$ be the expression inside the absolute value of the left--hand side above. We put 
\begin{equation}
\label{eq:Gamma1}
\Gamma:=n\log\delta - \log(2f_{k}(\alpha))-(m-1)\log\alpha.
\end{equation}
Note that $e^{\Gamma}-1=\Lambda$. Inequality \eqref{bbding1} implies that
\begin{equation}
\label{eq:Gamma2}
|\Gamma|<\dfrac{3}{\alpha^{m-1}}.
\end{equation}
Indeed, for $m\ge 3$, we have that $\frac{1.5}{\alpha^{m-1}}<\frac{1}{2}$, and then inequality \eqref{eq:Gamma2} follows from \eqref{bbding1} via the fact that 
\begin{equation}
\label{eq:x2x}
|e^{\Gamma}-1|<x\qquad {\text{\rm implies}}\qquad |\Gamma|<2x,\qquad {\text{\rm whenever}}\qquad x\in (0,1/2),
\end{equation}
with $x:=\frac{1.5}{\alpha^{m-1}}$. When $m=2$, we have $x_{n}=F_{m}^{(k)}=1$, so $n=1$, $\epsilon=1$, $\delta=1+{\sqrt{2}}$, and then 
$$
|\Gamma|=|\log(1+{\sqrt{2}})-\log(2f_k(\alpha) \alpha)|<\max\{\log(1+{\sqrt{2}}),\log(2f_k(\alpha)\alpha)\}<\log 3<\frac{3}{\alpha},
$$
where we used the fact that $1<2f_k(\alpha) \alpha<3$ (see Lemma \ref{fala5}, (i)). Hence, inequality \eqref{eq:Gamma1} holds for all pairs $(n,m)$ with $x_n=F_m^{(k)}$ with $m\ge 2$.  

Let us recall what we have proved, since this will be important later-on.

\begin{lem}
\label{lem:Gamma}
If $(n,m)$ are positive integers with $m\ge 2$ such that $x_n=F_m^{(k)}$, then with $\delta=x_1+{\sqrt{x_1^2-\epsilon}}$, we have
\begin{equation}
\label{eq:Gamma3}
|n\log\delta - \log(2f_{k}(\alpha))-(m-1)\log\alpha|<\frac{3}{\alpha^{m-1}}.
\end{equation}
\end{lem}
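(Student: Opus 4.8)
The plan is to combine the Binet-type formula \eqref{Binet} with the closed form \eqref{Pellxterm} for the Pell solutions and then convert a small-value estimate into a small-logarithm estimate. First I would substitute $x_n=\tfrac12(\delta^n+\eta^n)$ and $F_m^{(k)}=f_k(\alpha)\alpha^{m-1}+\bigl(F_m^{(k)}-f_k(\alpha)\alpha^{m-1}\bigr)$ into the hypothesis $x_n=F_m^{(k)}$, divide through by $f_k(\alpha)\alpha^{m-1}$, and isolate the quantity $\Lambda:=\delta^{n}(2f_k(\alpha))^{-1}\alpha^{-(m-1)}-1$. Its absolute value is then controlled by two terms: one coming from $\eta^n$, which has size $\delta^{-n}$ since $|\eta|=\delta^{-1}$, and one coming from the tail $F_m^{(k)}-f_k(\alpha)\alpha^{m-1}$, bounded by $\tfrac12$ via \eqref{approxgap}. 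Using Lemma \ref{fala5}(i) to replace $1/f_k(\alpha)$ by the crude bound $2$, together with $\delta\ge 1+\sqrt2>2$, gives $|\Lambda|\le \alpha^{-(m-1)}\delta^{-n}+\alpha^{-(m-1)}<1.5\,\alpha^{-(m-1)}$.

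Next I would set $\Gamma:=n\log\delta-\log(2f_k(\alpha))-(m-1)\log\alpha$, so that $e^\Gamma-1=\Lambda$, and pass from the bound on $|\Lambda|$ to a bound on $|\Gamma|$. For $m\ge 3$ this is immediate: since $\alpha>2(1-2^{-k})$ with $k\ge 4$ one has $\alpha^{m-1}\ge\alpha^2>3$, hence $1.5\,\alpha^{-(m-1)}<\tfrac12$, and the elementary implication that $|e^\Gamma-1|<x$ with $x\in(0,\tfrac12)$ forces $|\Gamma|<2x$ yields $|\Gamma|<3\alpha^{-(m-1)}$, which is exactly \eqref{eq:Gamma3}.

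The one case needing separate attention is $m=2$. Then $F_m^{(k)}=F_2^{(k)}=1$, so $x_n=1$, which forces $n=1$ and the fundamental solution $x_1=1$; since $d\ge 2$ and $y_1\ge 1$, this in turn gives $d=2$ and $\delta=1+\sqrt2$. Here $\Gamma=\log(1+\sqrt2)-\log\bigl(2f_k(\alpha)\alpha\bigr)$, and since both $1+\sqrt2$ and $2f_k(\alpha)\alpha$ lie in the interval $(1,3)$ (using $\tfrac12<f_k(\alpha)<\tfrac34$ from Lemma \ref{fala5}(i) and the location of $\alpha$ between $2(1-2^{-k})$ and $2$), I get $|\Gamma|<\log 3<3/\alpha=3\alpha^{-(m-1)}$. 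Combining the two cases gives the claimed inequality for every $m\ge 2$.

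I do not expect any genuine obstacle: the statement is essentially a bookkeeping assembly of \eqref{Binet}, \eqref{approxgap}, \eqref{Pellxterm}, and Lemma \ref{fala5}(i). The only points requiring care are the explicit numerical verifications $\alpha^2>3$ for $k\ge 4$ (needed so the $e^\Gamma$-to-$\Gamma$ comparison applies when $m\ge 3$) and the small-$\alpha$ estimate $\log 3<3/\alpha$ in the degenerate case $m=2$.
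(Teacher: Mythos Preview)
Your proposal is correct and follows essentially the same route as the paper: form $\Lambda=\delta^{n}(2f_k(\alpha))^{-1}\alpha^{-(m-1)}-1$, bound it by $1.5\,\alpha^{-(m-1)}$ via \eqref{approxgap} and Lemma~\ref{fala5}(i), convert to a bound on $\Gamma=\log(1+\Lambda)$ using $|e^\Gamma-1|<x\Rightarrow|\Gamma|<2x$ for $x<\tfrac12$, and handle the degenerate case $m=2$ by direct evaluation at $\delta=1+\sqrt{2}$. The numerical checks you flag ($\alpha^2>3$ for $k\ge 4$ and $\log 3<3/\alpha$) are exactly the ones the paper relies on.
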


\section{Bounding $n$ in terms of $m$ and $k$}

We next apply Theorem \ref{Matveev11} on the left-hand side of \eqref{bbding1}. First we need to check that 
$$
\Lambda=\delta^{n}(2f_{k}(\alpha))^{-1}\alpha^{-(m-1)}-1
$$
is nonzero. Well, if it were, then $ \delta^{n}=2f_{k}(\alpha)\alpha^{m-1}$. So, $2f_k(\alpha)=\delta^n\alpha^{-(m-1)}$ is a unit.  To see that this is not so, we perform a norm calculation of the element
$2f_k(\alpha)$ in ${\mathbb L}:={\mathbb Q}(\alpha)$. For $i\in \{2,\ldots,k\}$, we have that $|\alpha^{(i)}|<1$, so that, by the absolute value inequality, we have
\begin{eqnarray*}
|2f_k(\alpha^{(i)})|&=&\frac{2|\alpha^{(i)}-1|}{|2+(k+1)(\alpha^{(i)}-2)|}\\
&\le& \frac{4}{(k+1)(2-|\alpha^{(i)}|)-2}<\frac{4}{k-1}\le \frac{4}{5}\quad {\text{\rm for}}\quad k\ge 6.
\end{eqnarray*}
Thus, for $k\ge 6$, using also Lemma \ref{fala5} (i), we get
$$
|\mathcal{N}_{{\mathbb L}/{\mathbb Q}}(2f_k(\alpha))|<|2f_k(\alpha)|\prod_{i=2}^k |2f_k(\alpha^{(i)})|<\frac{3}{2} \left(\frac{4}{5}\right)^{k-1}\le \frac{3}{2}\left(\frac{4}{5}\right)^5<1.
$$
This is for $k\ge 6$. For $k=4,~5$ one checks that $|\mathcal{N}_{{\mathbb L}/{\mathbb Q}}(2f_k(\alpha))|<1$ as well. In fact, the norm of $2f_k(\alpha)$ has been computed (for all $k\ge 2)$ in \cite{FuLu}, and the formula is 
$$
|\mathcal{N}_{{\mathbb L}/{\mathbb Q}}(2f_k(\alpha))|=\frac{2^k(k-1)^2}{2^{k+1} k^k-(k+1)^{k+1}}.
$$
One can check directly that the above number is always smaller than $1$ for all $k\ge 2$ (in particular, for $k=4,5$). 
Thus, $\Lambda\neq 0$, and we can apply Theorem \ref{Matveev11}. We take
$$
t=3, \quad \gamma_{1}=\delta, \quad \gamma_2=2f_{k}(\alpha), \quad \gamma_3 = \alpha, \quad b_1=n, \quad b_2=-1, \quad b_3 = -(m-1).
$$
We take  $\mathbb{K} = \mathbb{Q}(\sqrt{d}, \alpha)$ which has degree $D\le 2k$. Since $\delta\geq 1+\sqrt{2} >\alpha$, the second inequality in \eqref{bdding22} tells us right-away that $n\le m$, so we can take $B:=m$. We have $h(\gamma_{1})=(1/2)\log \delta$ and $h(\gamma_{3})=(1/k)\log\alpha$. Further, 
\begin{equation}
\label{eq:hgamma2}
h(\gamma_2) = h(2f_k(\alpha))\leq h(2)+ h(f_k(\alpha))
< 3\log k +\log 2<4\log k
\end{equation}
by Lemma \ref{fala5} (ii). So, we can take $A_1:=k\log\delta $,  $A_2:= 8k\log k$ and $ A_3:= 2\log2 $. 
Now Theorem \ref{Matveev11} tells us that
\begin{eqnarray*}
\log|\Lambda|&>& -1.4\times 30^{6}\times 3^{4.5}\times (2k)^{2}(1+\log 2k)(1+\log m)
(k\log\delta)(8k\log k)(2\log 2),\\
&>&-1.6\times 10^{13}k^{4}(\log k)^2\log(\delta)(1+\log m).
\end{eqnarray*}
In the above, we used the fact that $k\ge 4$, therefore $2k\le k^{3/2}$, so 
$$
1+\log(2k)\le 1+1.5\log k<2.5\log k.
$$
By comparing the above inequality with inequality \eqref{bbding1}, we get
$$
(m-1)\log\alpha-\log 3< 1.6\times 10^{13}k^{4}(\log k)^2(\log\delta)(1+\log m).
$$
Thus,
$$
(m+1)\log\alpha <1.7\times 10^{13}k^{4}(\log k)^2(\log\delta)(1+\log m).
$$
Since $ \alpha^{m+1}\ge \delta^{n} $ by the second inequality in \eqref{bdding22}, we get that
\begin{equation}
\label{eq:n}
n < 1.7\times 10^{13}k^{4}(\log k)^2(1+\log m).
\end{equation}
Furthermore, since $\alpha >1.927$, we get
\begin{equation}
\label{eq:m}
m<2.6\times 10^{13}k^{4}(\log k)^2(\log\delta)(1+\log m).
\end{equation}
We now record what we have proved so far, which are estimates \eqref{eq:n} and \eqref{eq:m}.
\begin{lem}
\label{lemma11}
If $ x_{n}=F_{m}^{(k)} $ and $m\geq 2$, then
$$
n  <  1.7\times 10^{13}k^{4}(\log k)^2(1+\log m)~~{\text{and}}~~m  <2.6\times 10^{13}k^4(\log k)^2(\log\delta)(1+\log m).
$$
\end{lem}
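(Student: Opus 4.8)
The statement merely records the linear-forms-in-logarithms estimate already in play, so the plan is to run Matveev's theorem (Theorem~\ref{Matveev11}) on the quantity $\Lambda$ appearing on the left of \eqref{bbding1} and then feed the resulting lower bound back into the upper bound $|\Lambda|<1.5/\alpha^{m-1}$ coming from \eqref{bbding1}.

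First I would verify that
$$
\Lambda=\delta^{n}(2f_{k}(\alpha))^{-1}\alpha^{-(m-1)}-1
$$
is nonzero, since Matveev's theorem requires this. If $\Lambda=0$ then $\delta^{n}=2f_{k}(\alpha)\alpha^{m-1}$, which would force $2f_{k}(\alpha)$ to be a unit in $\mathbb{L}=\mathbb{Q}(\alpha)$. But a norm computation rules this out: for the conjugates inside the unit disc one has $|2f_{k}(\alpha^{(i)})|<4/(k-1)$ when $k\ge 6$ (the cases $k=4,5$ being checked directly, or one may simply invoke the closed formula for $\mathcal{N}_{\mathbb{L}/\mathbb{Q}}(2f_{k}(\alpha))$ from \cite{FuLu}), whence $|\mathcal{N}_{\mathbb{L}/\mathbb{Q}}(2f_{k}(\alpha))|<1$, a contradiction.

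Next I would apply Theorem~\ref{Matveev11} with $t=3$, $\gamma_{1}=\delta$, $\gamma_{2}=2f_{k}(\alpha)$, $\gamma_{3}=\alpha$ and exponents $b_{1}=n$, $b_{2}=-1$, $b_{3}=-(m-1)$, working in $\mathbb{K}=\mathbb{Q}(\sqrt{d},\alpha)$, which has degree $D\le 2k$. The second inequality in \eqref{bdding22} gives $n\le m$, so I may take $B=m$; and using $h(\gamma_{1})=(1/2)\log\delta$, $h(\gamma_{3})=(1/k)\log\alpha$, together with $h(2f_{k}(\alpha))<3\log k+\log 2$ from Lemma~\ref{fala5}(ii), the admissible choices $A_{1}=k\log\delta$, $A_{2}=8k\log k$, $A_{3}=2\log 2$ all work. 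Matveev's bound then yields $\log|\Lambda|>-1.6\times 10^{13}k^{4}(\log k)^{2}(\log\delta)(1+\log m)$ once the numerical constant is simplified, using $k\ge 4$ so that $1+\log(2k)<2.5\log k$.

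Finally I would combine this lower bound with $|\Lambda|<1.5/\alpha^{m-1}$ from \eqref{bbding1} to obtain $(m+1)\log\alpha<1.7\times 10^{13}k^{4}(\log k)^{2}(\log\delta)(1+\log m)$; the inequality $\alpha^{m+1}\ge\delta^{n}$ (again the second estimate in \eqref{bdding22}) then gives the stated bound on $n$, and the estimate $\alpha>1.927$ gives the stated bound on $m$. The only genuinely delicate points are the nonvanishing check for $\Lambda$ and the careful tracking of the numerical constants through Matveev's inequality; the rest is routine bookkeeping.
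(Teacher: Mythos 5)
Your proposal follows the paper's own proof essentially verbatim: the same nonvanishing check via the norm of $2f_k(\alpha)$, the same application of Matveev's theorem with $t=3$, the identical choices of $\gamma_i$, $b_i$, $D\le 2k$, $B=m$ and $A_1,A_2,A_3$, and the same final comparison with \eqref{bbding1} followed by dividing out $\log\delta$ via $\alpha^{m+1}\ge\delta^{n}$. The argument is correct and there is nothing to add.
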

Note that in the above bound, $n$ is bounded only in terms of $m$ and $k$ (but not $\delta$). 

\section{Bounding $m_1,~n_1,~m_2,~n_2$ in terms of $k$}

Next, we write down inequalities \eqref{eq:Gamma3} for both pairs $(n,m)=(n_j,m_j)$ with $j=1,2$, multiply the one for $j=1$ with $n_2$ and the one with $j=2$ with $n_1$, subtract them and apply the triangle 
inequality to the result to get that
\begin{eqnarray*}
&& |(n_2-n_1)\log(2f_{k}(\alpha))  - (n_1m_2-n_2m_1+n_2-n_1)\log\alpha| \\
& \leq & n_2|n_1\log\delta -\log(2f_k(\alpha))-(m_1-1)\log\alpha|\\
& + & n_1|n_2\log\delta -\log(2f_k(\alpha))-(m_2-1)\log\alpha|\\
& \leq & \dfrac{3n_2}{\alpha^{m_1-1}}+ \dfrac{3n_1}{\alpha^{m_2-1}} < \dfrac{6n_{2}}{\alpha^{m_1-1}}.
\end{eqnarray*} 
Therefore, we have
\begin{eqnarray}\label{ineqqq2}
\qquad\left|(n_2-n_1)\log(2f_k(\alpha)) -(n_1m_2-n_2m_1+n_2-n_1)\log\alpha\right|< \dfrac{6n_{2}}{\alpha^{m_1-1}}.
\end{eqnarray}
We are now set to apply Theorem \ref{Matveev12} with
\begin{eqnarray*}
\gamma_1=2f_{k}(\alpha), \quad \gamma_2=\alpha, \quad b_1= n_2-n_1,\quad b_2=-(n_1m_2-n_2m_1+n_2-n_1).
\end{eqnarray*}
The fact that $\gamma_1$ and $\gamma_2$ are multiplicatively independent follows because $\alpha$  is a unit and $2f_k(\alpha)$ isn't by a previous argument. 
Next, we observe that $ n_2-n_1 <n_2$, while by the absolute value of the inequality in \eqref{ineqqq2}, we have
$$
|n_1m_2-n_2m_1+n_2-n_1|\leq (n_2-n_1)\dfrac{\log(2f_k(\alpha))}{\log\alpha}+\dfrac{6n_2}{\alpha^{m_1-1}\log\alpha}<6n_2.
$$
In the above, we used that 
$$
\frac{\log(2f_k(\alpha))}{\log \alpha}<\frac{\log(1.5)}{\log \alpha}<1\qquad {\text{\rm and}}\qquad \frac{6}{\alpha^{m_1-1}\log\alpha}<5,
$$
because $\alpha\ge \alpha_4>1.92$ and $m_1\geq 2$. We take $\mathbb{K}:=\mathbb{Q}(\alpha)$ which has degree $D=k$. So, we can take
$$
\log B_1=4\log k>\max\left\{h(\gamma_{1}), \dfrac{|\log \gamma_{1}|}{k}, \dfrac{1}{k}\right\}
$$
(see inequality \eqref{eq:hgamma2}), 
and
$$
\log B_2 =\dfrac{1}{k}= \max\left\{h(\gamma_2), \dfrac{|\log \gamma_2|}{k}, \dfrac{1}{k}\right\}.
$$
Thus,
$$
b^{\prime}=\dfrac{(n_2-n_1)}{k\times(1/k)}+\dfrac{|n_1m_2-n_2m_1+n_2-n_1|}{4k\log k}<n_2+\dfrac{6n_2}{4k\log k}<1.3n_2.
$$
Now Theorem \ref{Matveev12} tells us that with
$$
\Gamma:=(n_2-n_1)\log(2f_k(\alpha)) -(n_1m_2-n_2m_1+n_2-n_{1})\log\alpha,
$$
we have
$$
\log|\Gamma|>-24.34\times k^4\left(\max\left\{\log(1.3n_2)+0.14, \dfrac{21}{k}, \dfrac{1}{2}\right\}\right)^{2}(4\log k)\left(\dfrac{1}{k}\right).
$$
Thus,
$$
\log|\Gamma|>-97.4 k^3\log k\left(\max\left\{\log(1.5n_2), \dfrac{21}{k}, \dfrac{1}{2}\right\}\right)^{2},
$$
where we used the fact that $\log(1.3n_2)+0.14=\log(1.3\times e^{0.14} n_2)<\log(1.5 n_2)$. 
By combining the above inequality with \eqref{ineqqq2}, we get
\begin{equation}
\label{eq:choices}
(m_1-1)\log\alpha - \log(6n_2)<97.4k^3\log k\left(\max\left\{\log(1.5n_2), \dfrac{21}{k}, \dfrac{1}{2}\right\}\right)^{2}.
\end{equation}
Since $\log(1.5n_2)\ge \log 3>1.098$, the maximum in the right--hand side above cannot be  $1/2$.  If it is not  $\log(1.5 n_2)$, we then get
\begin{equation}
\label{eq:n1n2}
1.098<\log(1.5n_2)\leq \frac{21}{k}\leq 5.25,\qquad {\text{\rm so}}\quad  k\le 19\quad {\text{\rm and}}\quad n_2\leq 127.
\end{equation}
Then, the above inequality \eqref{eq:choices} gives
\begin{eqnarray}
\label{1withalpha}
(m_1+1)\log\alpha & < & 97.4\times 21^2 k\log k +\log(6\times 127)+2\log \alpha\nonumber\\
& < & 4.3\times 10^5 k\log k.
\end{eqnarray}
Since $\alpha\ge 1.927$, we get that
\begin{equation}
\label{eq:branch1}
m_1+1<6.6\times 10^5 k\log k.
\end{equation}
Further, we have
\begin{eqnarray*}
(\alpha^{(m_1+1)})^{n_2} & > & (3F_{m_1}^{(k)})^{n_2}\ge (2F_{m_1}^{(k)}+1)^{n_2}=(2x_{n_1}+1)^{n_2}\\
& = & (\delta^{n_1}+(1+\eta^{n_1}))^{n_2}>\delta^{n_1 n_2}=(\delta^{n_2})^{n_1}\\
& = & (2x_{n_2}-\eta^{n_2})^{n_1}>2x_{n_2}-1>x_{n_2}=F_{m_2}^{(k)}>  \alpha^{m_2-2},
\end{eqnarray*}
so
\begin{equation}
\label{eq:m22}
m_2\le 1+n_2(m_1+1)<8.4\times 10^7k\log k.
\end{equation}
Since $n_1<n_2$, inequalities \eqref{eq:n1n2}, \eqref{eq:branch1} and \eqref{eq:m22} bound $m_1,n_1,m_2,n_2$ in terms of $k$ when the maximum in the right--hand side of \eqref{eq:choices} 
is $21/k$. 

Assume next that the maximum in the right--hand side of \eqref{eq:choices} is $\log(1.5n_2)$. Then
\begin{eqnarray}
\label{eq:branch2}
(m_1+1)\log\alpha & < & 97.4k^{3}\log k (\log(1.5n_{2}))^{2}+2\log \alpha+\log(6n_2)\nonumber\\
& < & 97.4 k^3 (\log k) (\log 1.5+\log n_2)^2+\log(24n_2)\nonumber\\
& < & 97.5\times 2.56 k^3 (\log k) (\log n_2)^2+6\log n_2\nonumber\\
& < & 249.6 k^3 (\log k) (\log n_2)^2+6\log n_2\nonumber\\
& < & 249.6 k^3(\log k)(\log n_2)^2\left(1+\frac{6}{249.6 k^3(\log k)(\log n_2)}\right)\nonumber\\
& < & 2.5\times 10^2 k^3 (\log k) (\log n_2)^2.
\end{eqnarray}
For the above inequality, we used that $2\log \alpha+\log(6n_2)<\log(24n_2)\le 6\log n_2$ (since $n_2\ge 2$ and $\alpha<2$), the fact that $\log(1.5 n_2)<1.6\log n_2$ holds for $n_2\ge 2$ and the fact that
$$
1+\frac{6}{249.6 k^3(\log k)(\log n_2)}<1.0004\qquad {\text{\rm holds~for}}\quad k\ge 4\quad {\text{\rm and}}\quad n_2\ge 2.
$$
In turn, since $\alpha\ge \alpha_4\ge 1.927$, \eqref{eq:branch2} yields
\begin{equation}
\label{ineqqu231}
m_1<4\times 10^2k^3(\log k) (\log n_2)^2.
\end{equation}
Since $ \alpha^{m_1+1}>\delta^{n_{1}} \geq \delta$ (see the second relation in \eqref{bbding1}), we get
\begin{equation}
\label{eq:inter}
\log\delta\le n_1\log\delta <(m_1+1)\log\alpha < 2.5\times 10^2 k^3(\log k)(\log n_{2})^{2}.
\end{equation}
By combining the above inequality with Lemma \ref{lemma11} for $(n,m):=(n_2,m_2)$ together with the fact that $n_2<m_2$, we get
\begin{eqnarray}
\label{eq:finalm21}
m_2 & < & 2.6\times 10^{13} k^4 (\log k)^2 (\log \delta) (1+\log m_2)\nonumber\\ 
& < & 2.6\times 10^{13} k^4 (\log k)^2 (2.5\times 10^2 k^3 (\log k))(\log m_2)^2 (1.92\log m_2)\nonumber\\
& < & 1.25\times 10^{16} k^7 (\log k)^3 (\log m_2)^3.
\end{eqnarray}
In the above, we used that $1+\log m_2\le 1.92 \log m_2$ holds for all $m_2\ge 3$. 
We now apply Lemma \ref{gl} with $m:=3$ and $T:=1.25\times 10^{16} k^7 (\log k)^3$ (which satisfies  the hypothesis $T>(4\cdot m^2)^m$), to get
\begin{eqnarray}
\label{eq:finalm2}
m_2 & < & 8\times 1.25\times 10^{16} k^7 (\log k)^3 (\log T)^3\nonumber\\
& < & 10^{17} k^7 (\log k)^3(7\log k+3\log\log k+\log(1.25\times 10^{16}))^3\nonumber\\
& < & 10^{17}\times (4.1\times 10^5) k^7 (\log k)^6\nonumber\\
& < &  4.1\times 10^{22} k^7 (\log k)^6.
\end{eqnarray}
In the above calculation, we used that 
$$
\left(\frac{7\log k+3\log\log k+\log(10^{16})}{\log k}\right)^3<4.1\times 10^5\qquad {\text{\rm for~all}}\qquad k\ge 4.
$$  
By substituting  the upper bound \eqref{eq:finalm2} for $m_2$ in the first inequality of Lemma \ref{lemma11}, we get
\begin{eqnarray}
\label{eq:finaln2}
n_ 2 & < & 1.7\times 10^{13} k^4 (\log k)^2 (1+\log m_2)\nonumber\\
& < & 1.7\times 10^{13} k^4 (\log k)^2(1+\log(4.1\times 10^{22})+7\log k+6\log\log k)\nonumber\\
& < & 1.7\times 10^{13} \times 48 k^4 (\log k)^3\nonumber\\
& < & 8.2\times 10^{14} k^4 (\log k)^3,
\end{eqnarray}
where we used the fact that
$$
\frac{7\log k+6\log\log k+\log(4.1\times 10^{22})+1}{\log k}<48\quad {\text{\rm for~all}}\quad k\ge 4.
$$
Finally, if we substitute the upper bound \eqref{eq:finaln2} for $n_2$ into the inequality \eqref{eq:branch2}, we get
\begin{eqnarray}
\label{eq:finalm1alpha}
(m_1+1)\log \alpha & < & 2.5\times 10^2 k^3 (\log k) (\log n_2)^2\nonumber\\
& < & 2.5\times 10^2 k^3 (\log k) (1+\log(4\times 10^{16})+4\log k+3\log\log k)^2\nonumber\\
& < & 2.5 \times 10^2 (9.2\times 10^2) k^3 (\log k)^3\nonumber\\
& < & 2.3\times 10^5 k^3 (\log k)^3.
\end{eqnarray}
In the above, we used that 
$$
\left(\frac{4\log k+3\log\log k+\log(3.4\times 10^{16})+1}{\log k}\right)^2<9.2\times 10^2\qquad {\text{\rm for~all}}\qquad k\ge 4.
$$
Thus, using $\alpha>1.927$, we get 
\begin{equation}
\label{eq:finalm1}
m_1<3.6\times 10^5 k^3(\log k)^3.
\end{equation}
Thus, inequalities \eqref{eq:finalm2}, \eqref{eq:finaln2}, \eqref{eq:finalm1} give upper bounds for $m_2,~n_2$ and $m_1$, respectively, in the case in which 
the maximum in the right--hand side of inequality \eqref{eq:choices} is $\log(1.5 n_2)$.  Comparing inequalities \eqref{eq:finalm2} with \eqref{eq:m22}, \eqref{eq:finaln2} with
\eqref{eq:n1n2}, and \eqref{eq:finalm1alpha} with \eqref{eq:branch1}, respectively, we conclude that \eqref{eq:finalm2}, \eqref{eq:finaln2} and \eqref{eq:finalm1} always hold.
 Let us summarise what we have proved again, which are the bounds \eqref{eq:finalm2}, \eqref{eq:finaln2} and \eqref{eq:finalm1}.
\begin{lem}
\label{lemma12}
If $x_{n_{j}}= F_{m_{j}}^{(k)}$ for $ j\in\{1,2\} $ with $2\le m_1<m_2$, and $n_1<n_2$, then
\begin{eqnarray*}
m_1<3.6\times 10^5 k^3(\log k)^3,\quad m_2<4.1\times 10^{22}k^7(\log k)^6,\quad n_2 < 8.2\times 10^{14}k^4(\log k)^3.
\end{eqnarray*}
\end{lem}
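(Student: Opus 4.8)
The plan is to eliminate $\log\delta$ from the two copies of the small linear form provided by Lemma~\ref{lem:Gamma} and then invoke the two-variable lower bound of Laurent, Mignotte and Nesterenko. Concretely, I would write \eqref{eq:Gamma3} once for $(n,m)=(n_1,m_1)$ and once for $(n,m)=(n_2,m_2)$, multiply the first inequality by $n_2$, the second by $n_1$, subtract, and apply the triangle inequality; the terms $n_1n_2\log\delta$ cancel, leaving
\[
\bigl|(n_2-n_1)\log(2f_k(\alpha))-(n_1m_2-n_2m_1+n_2-n_1)\log\alpha\bigr|<\frac{6n_2}{\alpha^{m_1-1}}.
\]
Since $\alpha$ is a unit while $2f_k(\alpha)$ is not (by the norm computation carried out in the previous section), the numbers $\gamma_1:=2f_k(\alpha)$ and $\gamma_2:=\alpha$ are multiplicatively independent, so Theorem~\ref{Matveev12} applies over $\mathbb{K}=\mathbb{Q}(\alpha)$ with $D=k$, the choices $\log B_1=4\log k$ and $\log B_2=1/k$, and $b'$ a constant multiple of $n_2$. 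Bounding the integer coefficients of the linear form by a constant times $n_2$ (the second one via the displayed inequality itself), this yields an estimate of the shape
\[
(m_1-1)\log\alpha-\log(6n_2)\ll k^3(\log k)\bigl(\max\{\log(1.5n_2),\,21/k,\,1/2\}\bigr)^2.
\]

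Next I would split according to which term realises the maximum. The value $1/2$ is impossible, since $n_2\ge 2$ forces $\log(1.5n_2)\ge\log 3>1$. If the maximum is $21/k$, then $k\le 19$ and $n_2\le 127$, so $m_1$ is bounded by an absolute constant; combining this with the telescoping chain $\alpha^{(m_1+1)n_2}>(2x_{n_1}+1)^{n_2}>\delta^{n_1n_2}=(\delta^{n_2})^{n_1}>x_{n_2}=F_{m_2}^{(k)}>\alpha^{m_2-2}$ gives $m_2\le 1+n_2(m_1+1)$, and thus $m_1,n_1,m_2,n_2$ are all bounded; one checks these bounds are comfortably weaker than the ones claimed.

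The remaining case, when the maximum equals $\log(1.5n_2)$, is where the real work (and the main obstacle) lies: the estimate $m_1\ll k^3(\log k)(\log n_2)^2$ is only useful once $n_2$ itself is controlled, but a bound for $n_2$ comes from Lemma~\ref{lemma11} applied to $(n_2,m_2)$, which re-introduces $\log\delta$ and $m_2$, so the estimates are entangled. I would untangle them by chaining the inequalities in a fixed order. First, $\alpha^{m_1+1}>\delta^{n_1}\ge\delta$ gives $\log\delta\ll k^3(\log k)(\log n_2)^2$. Substituting this into the $m$-bound of Lemma~\ref{lemma11} and using $n_2<m_2$ produces a self-referential inequality $m_2\ll k^7(\log k)^3(\log m_2)^3$; this self-reference is removed by Lemma~\ref{gl} (with the parameter there equal to $3$ and $T$ a suitable multiple of $k^7(\log k)^3$), giving $m_2\ll k^7(\log k)^6$. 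Feeding this back into the $n$-bound of Lemma~\ref{lemma11} yields $n_2\ll k^4(\log k)^3$, and substituting that bound on $n_2$ into $m_1\ll k^3(\log k)(\log n_2)^2$ yields $m_1\ll k^3(\log k)^3$. Tracking the numerical constants carefully through this chain --- using $\alpha>1.927$, $k\ge 4$, and crude estimates for the logarithmic factors --- gives the three displayed bounds; since in the preceding sub-case the quantities were even smaller, the stated inequalities hold in every case.
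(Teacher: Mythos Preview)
Your proposal is correct and follows essentially the same route as the paper: the elimination of $\log\delta$ via cross-multiplication of \eqref{eq:Gamma3}, the application of Theorem~\ref{Matveev12} with $\gamma_1=2f_k(\alpha)$, $\gamma_2=\alpha$, $\log B_1=4\log k$, $\log B_2=1/k$, the three-way case split on the maximum, the telescoping chain $\alpha^{(m_1+1)n_2}>\delta^{n_1n_2}>\alpha^{m_2-2}$ in the $21/k$ branch, and the loop $\log\delta\to m_2\to n_2\to m_1$ closed via Lemma~\ref{gl} in the $\log(1.5n_2)$ branch are all exactly what the paper does.
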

Since $n_1\le m_1$, the above lemma gives bounds for all of $m_1,n_1,m_2,n_2$ in terms of $k$ only. 

\section{The case $k>500$}

\begin{lem}
If $k>500$, then 
\begin{equation}
\label{eq:1}
8m_2^3 <2^k.
\end{equation}
\end{lem}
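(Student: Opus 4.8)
The plan is to feed the bound on $m_2$ from Lemma~\ref{lemma12} into \eqref{eq:1} and reduce everything to an elementary one-variable inequality that can be checked by hand. Concretely, Lemma~\ref{lemma12} gives $m_2<4.1\times 10^{22}k^7(\log k)^6$, so cubing and multiplying by $8$ yields
\[
8m_2^3< 8\,(4.1\times 10^{22})^3\,k^{21}(\log k)^{18}<5.6\times 10^{68}\,k^{21}(\log k)^{18}.
\]
Hence it suffices to prove that $5.6\times 10^{68}\,k^{21}(\log k)^{18}<2^k$ for every integer $k>500$.

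Next I would take natural logarithms of both sides, so that the desired inequality becomes
\[
k\log 2> \log(5.6\times 10^{68})+21\log k+18\log\log k .
\]
Since $\log(5.6\times 10^{68})<158.3$, it is enough to show that $g(k):=k\log 2-21\log k-18\log\log k-158.3$ is positive for all integers $k\ge 501$.

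Then I would check the endpoint together with monotonicity. At $k=501$ one has $k\log 2>347$, while $21\log 501<131$ and $18\log\log 501<33$, so $g(501)>347-131-33-158.3>0$. Moreover
\[
g'(k)=\log 2-\frac{21}{k}-\frac{18}{k\log k}>0\qquad\text{for all } k\ge 501,
\]
because for such $k$ the two subtracted terms are below $0.043$ and $0.006$ respectively, whereas $\log 2>0.69$. Thus $g$ is increasing on $[501,\infty)$ and hence positive there, which gives \eqref{eq:1}.

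The only real ``obstacle'' is honest bookkeeping of the numerical constants: one must make sure that $8(4.1\times10^{22})^3<5.6\times10^{68}$ and that $\log(5.6\times10^{68})<158.3$ are correct, and then that $g'(k)>0$ on the relevant range. There is no conceptual difficulty, since the exponential $2^k$ dominates the polynomial-times-polylog bound $5.6\times10^{68}k^{21}(\log k)^{18}$ with an enormous margin once $k$ exceeds a few hundred.
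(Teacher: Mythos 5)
Your proof is correct and follows essentially the same route as the paper: both reduce \eqref{eq:1} to the bound $m_2<4.1\times 10^{22}k^7(\log k)^6$ from Lemma~\ref{lemma12} and then to a single elementary inequality between $2^k$ and a polynomial-times-polylog expression in $k$. The only difference is that the paper disposes of that final inequality by a \emph{Mathematica} check (valid already for $k\ge 462$), whereas you verify it by hand via logarithms and a monotonicity argument; your numerical constants all check out.
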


\begin{proof}
In light of the upper bound given by Lemma \ref{lemma12} on $m_2$, this is implied by
$$
4.1\times 10^{22} k^7(\log k)^6<2^{k/3-1},
$$
which indeed holds for all $k\ge 462$ as confirmed by {\it Mathematica}. 
\end{proof}

From now on, we assume that  $k>500$. Thus, \eqref{eq:1} holds. The main result of this section is the following.

\begin{lem}
\label{lem:maink>500}
If $k>500$, then $m_1\le k+1$. In particular, $x_{n_1}=F_{m_1}^{(k)}=2^{m_1-2}$, and $n_1=1$. 
\end{lem}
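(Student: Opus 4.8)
The claim is that under the running hypotheses ($k>500$, so $8m_2^3<2^k$, and in particular $m_1<2^k$), we must have $m_1\le k+1$. The strategy is a proof by contradiction: assume $m_1\ge k+2$, so that $F_{m_1}^{(k)}$ is genuinely different from a power of $2$, and derive a contradiction with the two equations $x_{n_1}=F_{m_1}^{(k)}$, $x_{n_2}=F_{m_2}^{(k)}$ by producing a second, much sharper linear form in logarithms. The point is that when $m_1\ge k+2$ we can use the Cooper--Howard / G\'omez--Luca expansion (Lemma \ref{Gomez}, valid since $m_1<2^k$) to write $F_{m_1}^{(k)}=2^{m_1-2}(1+\zeta)$ with $\zeta$ very small --- of size roughly $m_1/2^{k+1}$ --- and hence $2x_{n_1}=\delta^{n_1}+\eta^{n_1}$ is extremely close to $2^{m_1-1}$. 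This should feed into a linear form $|n_1\log\delta-(m_1-1)\log 2|$ that is bounded by something like $m_1/2^{k}$ plus an exponentially small Pell error term, i.e.\ far smaller than what a generic three-term form in $\log\delta,\log\alpha,\log 2$ would give.

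**Key steps, in order.** First, from $x_{n_1}=F_{m_1}^{(k)}$ and $m_1\ge k+2$, use Lemma \ref{Gomez} together with \eqref{eq:1} (which gives $|\eta(k,m_1)|<4m_1^3/2^{3k+3}$ and bounds the $\delta_i$-terms) to get
\[
\left|\,2x_{n_1}-2^{m_1-1}\,\right|<\frac{c\,m_1\,2^{m_1-1}}{2^{k+1}}
\]
for an explicit small constant $c$, hence $|\delta^{n_1}2^{-(m_1-1)}+\eta^{n_1}2^{-(m_1-1)}-1|<c\,m_1/2^{k+1}$; since $|\eta^{n_1}|=\delta^{-n_1}\le \delta^{-1}$ is tiny compared to $2^{m_1-1}$, this yields $|\delta^{n_1}2^{-(m_1-1)}-1|< c'm_1/2^{k+1}$, and then (via \eqref{eq:x2x}) $|n_1\log\delta-(m_1-1)\log 2|<c''m_1/2^{k+1}$. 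Second, combine this with the already-established three-term relation \eqref{eq:Gamma3} for $(n_1,m_1)$, namely $|n_1\log\delta-\log(2f_k(\alpha))-(m_1-1)\log\alpha|<3/\alpha^{m_1-1}$. Eliminating $n_1\log\delta$ between the two gives a two-term-ish form $|(m_1-1)\log(2/\alpha)-\log(2f_k(\alpha))|<$ (tiny). Third --- and this is the crucial lower bound --- apply Matveev's Theorem \ref{Matveev11} (or the Laurent--Mignotte--Nesterenko Theorem \ref{Matveev12}) to this form: $\log(2/\alpha)$ and $2f_k(\alpha)$ are nonzero and, because $\alpha\to 2$ only in the limit and $2f_k(\alpha)$ is not a unit while $\alpha$ is, the form does not vanish; one gets a lower bound of the shape $\exp(-C\,k^{O(1)}(\log k)^{O(1)})$ for the form. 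Fourth, confront this lower bound with the upper bound of size roughly $m_1/2^{k}+1/\alpha^{m_1-1}$. Using Lemma \ref{lemma12} ($m_1<3.6\times10^5k^3(\log k)^3$), the upper bound is at most something like $k^{4}(\log k)^{3}/2^{k}$, which decays like $2^{-k}$ up to polynomial factors, while the Matveev lower bound only decays like $\exp(-C\,\mathrm{poly}(k))$ --- wait, that is the wrong direction, so instead one must play this off against the $\alpha^{m_1-1}$ factor: rearrange so that $\alpha^{m_1-1}$ (equivalently $m_1$) is bounded, forcing $m_1<k+2$, the desired contradiction. Concretely, the inequality should be organised so that the dominant term on the small side is $3/\alpha^{m_1-1}$ versus a Matveev lower bound independent of $m_1$, yielding $\alpha^{m_1-1}<\exp(C\,\mathrm{poly}(k))$, i.e.\ $m_1<C'\,\mathrm{poly}(k)$; but we already have that. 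The genuine contradiction must come from pairing the $2^{-k}$-small term $c''m_1/2^{k+1}$ against a lower bound that is NOT that small, so the correct pairing is: the two-term form is $\ge$ (Matveev bound $\gg 2^{-k}$ whenever $k$ is large relative to its polynomial), while the two-term form is $\le c''m_1/2^{k+1}+3/\alpha^{m_1-1}$; if also $m_1\ge k+2$ then $3/\alpha^{m_1-1}\le 3/\alpha^{k+1}$ is itself smaller than the Matveev bound for $k>500$, and $c''m_1/2^{k+1}$ is $\le c''\cdot 3.6\times10^5 k^3(\log k)^3/2^{k+1}$, also smaller than the Matveev bound once $k$ is large enough --- contradiction. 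Hence $m_1\le k+1$.

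**Finishing.** Once $m_1\le k+1$, equation \eqref{Fibbo111} gives $x_{n_1}=F_{m_1}^{(k)}=2^{m_1-2}$, so $P(x_{n_1})\le 2\le 5$; by Lemma \ref{lem:Luc} (and the remark that $3,9,243$ are not $k$-generalized Fibonacci numbers for $k\ge4$) we conclude $n_1=1$. This gives the ``in particular'' part.

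**Main obstacle.** The delicate point is getting the numerics of the final contradiction to close for \emph{all} $k>500$: one needs the Matveev (or Laurent--Mignotte--Nesterenko) lower bound on the two-term form $|(m_1-1)\log(2/\alpha)-\log(2f_k(\alpha))|$ to comfortably exceed both $c''m_1/2^{k+1}$ (using the Lemma \ref{lemma12} bound on $m_1$) and $3/\alpha^{k+1}$. Since the Baker-type lower bound carries factors like $k^2(\log k)$ in the exponent from $D\le 2k$ and $h(2f_k(\alpha))<4\log k$, while the competing upper bound only has the single exponential $2^{-k}$ (times a polynomial in $k$), the inequality $\mathrm{poly}(k)<2^k$-type comparison does close, but the constants have to be tracked carefully --- in particular one must make sure the height of $\log(2/\alpha)$ is handled correctly (it is not a unit's logarithm, but $h(2/\alpha)\le \log 2+h(\alpha)=\log2+\tfrac1k\log\alpha$ is small) and that multiplicative independence of $2/\alpha$ and $2f_k(\alpha)$ holds, which again follows since $\alpha$ is an algebraic unit generating a field where $2f_k(\alpha)$ has non-unit norm. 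I also expect a minor subtlety in discharging the case where the linear form $(m_1-1)\log(2/\alpha)-\log(2f_k(\alpha))$ could be suspiciously small for structural (not just Baker) reasons; this is ruled out by the explicit norm formula for $2f_k(\alpha)$ recorded earlier, which shows $2f_k(\alpha)$ is never a power of $\alpha/2$.
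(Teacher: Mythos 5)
Your reduction to the two--term form is where the argument breaks. Subtracting the two inequalities $|n_1\log\delta-(m_1-1)\log 2|\ll m_1/2^{k+1}$ and $|n_1\log\delta-\log(2f_k(\alpha))-(m_1-1)\log\alpha|<3/\alpha^{m_1-1}$ does give
$$
\bigl|(m_1-1)\log(2/\alpha)-\log(2f_k(\alpha))\bigr|\ll \frac{m_1}{2^{k+1}}+\frac{3}{\alpha^{m_1-1}},
$$
but this form is \emph{genuinely} that small, with no cancellation needed: since $2(1-2^{-k})<\alpha<2$ we have $\log(2/\alpha)=O(2^{-k})$, and a short computation with $f_k$ shows $2f_k(\alpha)-1=O(k2^{-k})$, so each of the two terms is individually of size $O(\mathrm{poly}(k)\,2^{-k})$. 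A Baker/Laurent--Mignotte--Nesterenko lower bound over the degree-$k$ field gives only $\exp(-Ck^3(\log k)(\log m_1)^2)$, which for large $k$ is far \emph{smaller} than $2^{-k}$, not larger; so the confrontation you propose produces no contradiction. You half-noticed this ("that is the wrong direction") but then asserted the Matveev bound is $\gg 2^{-k}$ "whenever $k$ is large relative to its polynomial", which is exactly backwards. There is also a structural sanity check that dooms the strategy: your contradiction step uses only the single equation $x_{n_1}=F_{m_1}^{(k)}$ together with the generic bound on $m_1$ from Lemma \ref{lemma12}; run verbatim on $(n_2,m_2)$ it would equally "prove" $m_2\le k+1$, which is false for the genuine solutions in family (i) of Theorem \ref{Main} (there $m_2=k+2$ for every odd $k$, including $k>500$). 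No argument ignoring the interaction of the two solutions can establish this lemma.

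The paper's proof is elementary at this stage and uses both solutions essentially. From the G\'omez--Luca expansion it gets $|n\log\delta-(m-1)\log 2|<2^{-\min\{2k/3-2,\,m-2\}}$ for both pairs, cross-multiplies by $n_2$ and $n_1$ and subtracts to eliminate $\log\delta$, leaving an \emph{integer} multiple of $\log 2$ that must vanish unless $2^{m_1-2}<4n_2$; this yields either $m_1<k/3+3$ (done) or $n_1(m_2-1)=n_2(m_1-1)$. In the latter case, assuming $m_1\ge k+2$, it repeats the trick one order deeper in the expansion (keeping the $(k-m)/2^{k+1}$ term) to force $n_1(k-m_2)=n_2(k-m_1)$ as well; the two proportionalities $n_1/n_2=(m_1-1)/(m_2-1)=(k-m_1)/(k-m_2)$ force $m_1=m_2$, a contradiction. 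Your finishing step ($m_1\le k+1\Rightarrow F_{m_1}^{(k)}=2^{m_1-2}\Rightarrow n_1=1$ via Lemma \ref{lem:Luc}) matches the paper and is fine.
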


For the proof, we go to Lemma \ref{Gomez} and write for $m:=m_j$ with $j=1,2$ the following approximations
\begin{equation}
\label{eq:approxFm}
F_m^{(k)}=2^{m-2}(1+\zeta_m)=2^{m-2}\left(1+\delta_m\left(\frac{k-m}{2^{k+1}}\right)+\gamma_m\right),
\end{equation}
where $\delta_m\in \{0,1\}$ and
\begin{eqnarray}
\label{eq:zeta}
|\zeta_m| & \le & \frac{m}{2^{k+1}}+\frac{m^2}{2^{2k+2}}+\frac{4m^3}{2^{3k+3}}<\frac{1}{2^{2k/3}}\left(\frac{1}{2}+\frac{1}{2^{2+2k/3}}+\frac{1}{2^{4+4k/3}}\right)<\frac{1}{2^{2k/3}},\\
|\gamma_m| & \le & \frac{m^2}{2^{2k+2}}+\frac{4m^3}{2^{3k+3}}<\frac{1}{2^{4k/3}}\left(\frac{1}{2^2}+\frac{1}{2^{2k/3+4}}\right)<\frac{1}{2^{4k/3}}\nonumber,
\end{eqnarray}
where we used that $m<2^{k/3-1}$ (see \eqref{eq:1}) and $k\ge 4$. We then write
$$
|F_m^{(k)}-x_n|=0,
$$
from where we deduce 
\begin{equation}
\label{eq:zeta1}
|2^{m-1}(1+\zeta_m)-\delta^n|=\frac{1}{\delta^n}.
\end{equation}
Thus,
$$
|2^{m-1}-\delta^n|=\frac{1}{\delta^n}+|\zeta_m| 2^{m-1},
$$
so
\begin{equation}
\label{eq:20}
|1-\delta^n 2^{-(m-1)}|=\frac{1}{2^{m-1}\delta^n}+|\zeta_m|<\frac{1}{2^m}+\frac{1}{2^{2k/3}}\le \frac{1}{2^{\min\{2k/3-1,m-1\}}}.
\end{equation}
In the above, we used that $\delta^n\ge \delta\ge 1+{\sqrt{2}}>2$. The right--hand side above is $<1/2$, so we may pass to logarithmic form as in \eqref{eq:x2x} to get that
\begin{equation}
\label{eq:3}
\left|n\log \delta-(m-1)\log 2\right|<\frac{1}{2^{\min\{2k/3-2,m-2\}}}.
\end{equation}
We write the above inequality for $(n_1,m_1)$ and $(n_2,m_2)$ cross-multiply the one for $(n_1,m_1)$ by $n_2$ and the one for $(n_2,m_2)$ by $n_1$ and subtract them to get
$$
|(n_1(m_2-1)-n_2(m_1-1))\log 2|<\frac{n_2}{2^{\min\{2k/3-2,m_1-2\}}}+\frac{n_1}{2^{\min\{2k/3-2,m_2-2\}}}.
$$
Assume $n_1(m_2-1)\ne n_2(m_1-1)$. Then the left--hand side above is $\ge \log 2>1/2$. In particular, either
$$
2^{\min\{2k/3-2,m_1-2\}}<4n_2\quad {\text{\rm or}}\qquad 2^{\min\{2k/3-2,m_2-2\}}<4n_1.
$$
The first one is weaker than the second one and is implied by the second one, so the first one must hold. If the minimum is $2k/3-2$, we then get
$$
2^{2k/3-2}\le 4n_2<2^{k/3+1},
$$
because $n_2\le m_2<2^{k/3-1}$, so $2k/3-2<k/3+1$, or $k<9$, a contradiction. Thus, 
$$
2^{m_1-2}<4n_2<2^{k/3+1},
$$
getting
$$
m_1<k/3+3<k+2.
$$
Thus, by Example \ref{exa:1} (i), we get that $x_{n_1}=F_{m_1}^{(k)}=2^{m_1-2}$, which by Lemma \ref{lem:Luc}, implies that $n_1=1$. 

So, we got the following partial result.

\begin{lem}
\label{lem13}
For $k>500$, either $n_1=1$ and $m_1<k/3+3$, or $n_1/n_2=(m_1-1)/(m_2-1)$. 
\end{lem}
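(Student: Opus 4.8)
The plan is to turn each of the two Pell relations $x_{n_j}=F_{m_j}^{(k)}$ into a sharp linear inequality relating $\log\delta$ and $\log 2$, and then to eliminate $\log\delta$ by cross-multiplication so that only an integer combination of $m_1-1$ and $m_2-1$ survives. Since $k>500$, the bound \eqref{eq:1} gives $m_j<2^{k/3-1}$ for $j=1,2$, which is exactly the hypothesis needed to apply the asymptotic expansion of Lemma \ref{Gomez}. Writing $F_m^{(k)}=2^{m-2}(1+\zeta_m)$ with $|\zeta_m|<2^{-2k/3}$ and using the Pell representation \eqref{Pellxterm} together with $|\eta^n|=\delta^{-n}$, I would first record the exact identity $|2^{m-1}(1+\zeta_m)-\delta^n|=\delta^{-n}$, and from it the multiplicative estimate $|1-\delta^n 2^{-(m-1)}|<2^{-\min\{2k/3-1,\,m-1\}}$, valid for each pair $(n,m)\in\{(n_1,m_1),(n_2,m_2)\}$.

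Because the right-hand side is below $1/2$, I would pass to logarithmic form via \eqref{eq:x2x} to obtain $|n\log\delta-(m-1)\log 2|<2^{-\min\{2k/3-2,\,m-2\}}$ for each pair. Multiplying the inequality for $(n_1,m_1)$ by $n_2$, the one for $(n_2,m_2)$ by $n_1$, and subtracting, the $\log\delta$ terms cancel and leave
\[
|(n_1(m_2-1)-n_2(m_1-1))\log 2|<\frac{n_2}{2^{\min\{2k/3-2,\,m_1-2\}}}+\frac{n_1}{2^{\min\{2k/3-2,\,m_2-2\}}}.
\]
The dichotomy of the lemma now emerges from the left-hand side: if the integer $n_1(m_2-1)-n_2(m_1-1)$ is zero, then $n_1/n_2=(m_1-1)/(m_2-1)$ and we land in the second alternative; otherwise the left-hand side is $\ge\log 2>1/2$, forcing one of the two fractions on the right to exceed $1/4$.

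The remaining task, which I expect to be the main obstacle, is to extract the sharp bound $m_1<k/3+3$ in the non-vanishing case. The point is to argue that it is the \emph{first} summand that must be large: since $m_1<m_2$ and $n_1<n_2$, the inequality $2^{\min\{2k/3-2,\,m_2-2\}}<4n_1$ implies $2^{\min\{2k/3-2,\,m_1-2\}}<4n_2$, so the latter is the weaker condition and must hold. I would then split on the minimum. If it equals $2k/3-2$, then $2^{2k/3-2}<4n_2<2^{k/3+1}$ (using $n_2\le m_2<2^{k/3-1}$ from \eqref{eq:1}), which forces $2k/3-2<k/3+1$, i.e.\ $k<9$, contradicting $k>500$; hence the minimum is $m_1-2$, giving $2^{m_1-2}<4n_2<2^{k/3+1}$ and therefore $m_1<k/3+3$. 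Finally, since $m_1<k/3+3<k+2$, Example \ref{exa:1}(i) yields $x_{n_1}=F_{m_1}^{(k)}=2^{m_1-2}$, so $P(x_{n_1})\le 2\le 5$, and Lemma \ref{lem:Luc} then forces $n_1=1$, which is precisely the first alternative.
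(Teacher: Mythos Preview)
Your proposal is correct and follows essentially the same route as the paper: write $F_m^{(k)}=2^{m-2}(1+\zeta_m)$ via Lemma~\ref{Gomez}, derive the logarithmic inequality $|n\log\delta-(m-1)\log 2|<2^{-\min\{2k/3-2,\,m-2\}}$, cross-multiply and subtract to isolate the integer $n_1(m_2-1)-n_2(m_1-1)$, and then dispatch the non-vanishing case exactly as you describe using $n_2\le m_2<2^{k/3-1}$ together with Example~\ref{exa:1}(i) and Lemma~\ref{lem:Luc}. The only cosmetic difference is that the paper phrases the pigeonhole step as ``either $2^{\min\{2k/3-2,m_1-2\}}<4n_2$ or $2^{\min\{2k/3-2,m_2-2\}}<4n_1$, and the first is implied by the second,'' which is equivalent to your formulation.
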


To finish the proof of Lemma \ref{lem:maink>500}, assume for a contradiction that $m_1\ge k+2$.  
Lemma \ref{lem13} shows that $n_1/n_2=(m_1-1)/(m_2-1)$. Further, in \eqref{eq:approxFm}, we have $\delta_{m_1}=\delta_{m_2}=1$. 
Thus, we can rewrite equation \eqref{eq:zeta1} using $\gamma_m$ for both $m\in \{m_1,m_2\}$. We get
$$
\left|2^{m-1}\left(1+\frac{k-m}{2^{k+1}}+\gamma_m\right)-\delta^n\right|=\frac{1}{\delta^n},
$$ 
so
$$
\left|2^{m-1}\left(1+\frac{k-m}{2^{k+1}}\right)-\delta^n\right|\le \frac{1}{\delta^n}+2^{m-1}|\gamma_m|,
$$
therefore
$$
\left|\left(1+\frac{k-m}{2^{k+1}}\right)-\delta^n 2^{-(m-1)}\right|\le \frac{1}{2^{m-1}\delta^n}+|\gamma_m|.
$$
Now $\delta^n\ge \alpha^{m-2}$  by the  first inequality in \eqref{bdding22}. Thus,
$$
2^{m-1}\delta^n\ge 2^{m-1} \alpha^{m-2}\ge 2^{m-1} 2^{0.9(m-2)}>2^{1.9m-3}>2^{1.9k}>2^{4k/3},
$$
where we used the fact that $m\ge k+2$ and that $\alpha\ge \alpha_4=1.9275\ldots>2^{0.9}$. Since also $|\gamma_m|\le \frac{1}{2^{4k/3}}$, we get that 
$$
\left|\left(1+\frac{k-m}{2^{k+1}}\right)-\delta^n 2^{-(m-1)}\right|<\frac{2}{2^{4k/3}}.
$$
The expression $1+(k-m)/2^{k+1}$ is in $[1/2,2]$. Thus, 
$$
\left|1-\delta^n 2^{-(m-1)} (1+(k-m)/2^{k+1})^{-1}\right|<\frac{4}{2^{4k/3}}.
$$
The right--hand side is $<1/2$ for all $k\ge 4$. We pass to logarithms via implication \eqref{eq:x2x} getting that
$$
\left|n\log \delta-(m-1)\log 2-\log\left(1+\frac{k-m}{2^{k+1}}\right)\right|<\frac{8}{2^{4k/3}}.
$$
We evaluate the above in $(n,m):=(n_j,m_j)$ for $j=1,2$. We multiply the expression for $j=1$ with $n_2$, the one with $j=2$ with $n_1$, subtract them 
and use $n_2(m_1-1)=n_1(m_2-1)$, to get
\begin{equation}
\label{eq:2}
\left|n_1\log\left(1+\frac{k-m_2}{2^{k+1}}\right)-n_2\left(1+\frac{k-m_1}{2^{k+1}}\right)\right|<\frac{16n_2}{2^{4k/3}}.
\end{equation}
One checks that in our range we have
\begin{equation}
\label{eq:16}
16 n_2<2^{k/4}.
\end{equation}
By Lemma \ref{lemma12}, this is fulfilled if 
$$
16\times 8.2\times 10^{14} k^4 (\log k)^3<2^{k/4},
$$
and \textit{Mathematica} checks that this is so for all $k\ge 346$. Thus,  inequality \eqref{eq:2} implies 
$$
\left|n_1\log\left(1+\frac{k-m_2}{2^{k+1}}\right)-n_2\left(1+\frac{k-m_1}{2^{k+1}}\right)\right|<\frac{2^{k/4}}{2^{4k/3}}<\frac{1}{2^{13k/12}}.
$$
Using the fact that the inequality
$$
|\log(1+x)-x|<2x^2\quad {\text{\rm holds~for}}\quad |x|<1/2,
$$
with $x_j:=(k-m_j)/2^{k+1}$ for $j=1,2$, and noting that $2x_j^2<2m_2^2/2^{2k+2}$ holds for both $j=1,2$, we get 
$$
\left|\frac{n_1(k-m_2)}{2^{k+1}}-\frac{n_2(k-m_1)}{2^{k+1}}\right|<\frac{4 n_2m_2^2}{2^{2k+2}}+\frac{1}{2^{13k/12}}.
$$
In the right--hand side, we have
$$
\frac{4n_2m_2^2}{2^{2k+2}}<\frac{2^{2+(k/4-4)+2(k/3-1)}}{2^{2k+2}}=\frac{1}{2^{13k/12+5}}.
$$
Hence,
$$
\left|\frac{n_1(k-m_2)}{2^{k+1}}-\frac{n_2(k-m_1)}{2^{k+1}}\right|<\frac{2}{2^{13k/12}}.
$$
which implies 
$$
|n_1(k-m_2)-n_2(k-m_1)|<\frac{4}{2^{k/12}}.
$$
Since $k>500$, the right--hand side is smaller than $1$. Since the left--hand side is an integer, it must be the zero integer. Thus, 
$$
n_1/n_2=(k-m_1)/(k-m_2).
$$
Since also $n_1/n_2=(m_1-1)/(m_2-1)$, we get that $(m_1-1)/(m_2-1)=(m_1-k)/(m_2-k)$, or $(m_1-1)/(m_1-k)=(m_2-1)/(m_2-k)$. This gives $1+(k-1)/(m_1-k)=1+(k-1)/(m_2-k)$, 
so $m_1=m_2$, a contradiction.

Thus, $m_1\le k+1$. By Example \ref{exa:1} (i), we get that $x_{n_1}=2^{m_1-2}$, which by Lemma \ref{lem:Luc} implies that $n_1=1$. This finished the proof of Lemma \ref{lem:maink>500}.

\section{The case $m_1>376$}

Since $k>500$, we know, by Lemma \ref{lem:maink>500}, that $m_1\le k+1$ and $n_1=1$. In this section, we prove that if also $m_1>376$, then the only solutions 
are the ones shown at (i) and (ii) of the Theorem \ref{Main}. This finishes the proof of Theorem \ref{Main} in the case $k>500$ and $m_1>376$. The remaining cases are handled computationally 
in the next section. 

\subsection{A lower bound for $m_1$ in terms of $m_2$}

The main goal of this subsection  is to prove the following result. 

\begin{lem}
\label{lem:7}
Assume that $m_1>376$. Then $2^{m_1-6}>\max\{k^4,n_2^2\}$.
\end{lem}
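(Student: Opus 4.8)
The plan is to leverage the machinery already set up: since $k>500$, Lemma~\ref{lem:maink>500} gives $n_1=1$ and $m_1\le k+1$, so $x_1=F_{m_1}^{(k)}=2^{m_1-2}$. The idea is to run an argument analogous to the one in the previous section, but now extracting a genuinely \emph{useful lower} bound for $m_1$ rather than just an upper bound for the variables. First I would recall from \eqref{eq:3} (with $(n,m)=(n_1,m_1)=(1,m_1)$) that
$$
\left|\log\delta-(m_1-1)\log 2\right|<\frac{1}{2^{\min\{2k/3-2,m_1-2\}}}.
$$
Since $m_1\le k+1$ and $m_1>376$, this minimum is $m_1-2$, so $|\log\delta-(m_1-1)\log 2|<2^{-(m_1-2)}$; in particular $\delta$ is extremely close to $2^{m_1-1}$, and $\log\delta<(m_1-1)\log 2+1$. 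This pins $\log\delta$ down in terms of $m_1$ from both sides.

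Next I would feed this into the inequalities of Lemma~\ref{lemma11} applied to $(n_2,m_2)$, together with the relation \eqref{eq:uuu} which says $n_2c_1\log\delta\le m_2+1\le n_2c_1\log\delta+3$. From $\log\delta\approx (m_1-1)\log 2$ and $\alpha<2$ one gets a clean comparison $m_2+1\ge n_2c_1\log\delta > n_2(m_1-1)$ roughly, i.e.\ $n_2<(m_2+1)/(m_1-1)$; combined with $n_2<m_2$ this is not yet enough, so the real input is the first inequality of Lemma~\ref{lemma11}, $n_2<1.7\times 10^{13}k^4(\log k)^2(1+\log m_2)$, and the second one $m_2<2.6\times 10^{13}k^4(\log k)^2(\log\delta)(1+\log m_2)$. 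Substituting $\log\delta<(m_1-1)\log2+1<m_1$ (say) into the $m_2$ bound, and then the resulting bound on $m_2$ back into the $n_2$ bound, expresses both $m_2$ and $n_2$ polynomially in $k$ and $m_1$, with only logarithmic dependence on $m_1$. Since $m_1\le k+1$, everything becomes polynomial in $k$ with log corrections, so that $\max\{k^4,n_2^2\}$ is bounded by something like $k^{C}(\log k)^{C'}$, while $2^{m_1-6}$ must be shown to dominate this.

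To close the loop I would reverse the direction: the point is that $m_1$ cannot be too small compared to $\log k$ (equivalently $\log m_2$), and this must come from an inequality giving $m_1$ a \emph{lower} bound. The natural source is again Lemma~\ref{lemma11}/\eqref{eq:uuu}: from $m_2+1\ge n_2c_1\log\delta$ and $\log\delta>(m_1-1)\log 2-1$ together with $n_2\ge 2$ (indeed $n_2>n_1=1$) we get $m_2\ge 2c_1((m_1-1)\log 2-1)-1$, so $m_2$ grows linearly in $m_1$; conversely the second bound of Lemma~\ref{lemma11} with $\log\delta<m_1$ forces $m_2<2.6\times 10^{13}k^4(\log k)^2 m_1(1+\log m_2)$. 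These two are compatible only in a range where $m_1$ is not too small, but that still does not directly give $m_1\gtrsim\log k$. So the cleaner route is: assume $m_1>376$, derive $n_2^2 < \big(1.7\times 10^{13}k^4(\log k)^2(1+\log m_2)\big)^2$ and $m_2<2.6\times 10^{13}k^4(\log k)^2 m_1(1+\log m_2)$, solve the latter for $m_2$ via Lemma~\ref{gl} to get $m_2<k^{C}(\log k)^{C'} m_1(\log m_1)^{?}$, hence $1+\log m_2 = O(\log k)$, hence $n_2^2 = O(k^{10}(\log k)^4)$ and of course $k^4=O(k^4)$; finally verify $2^{370}>2^{m_1-6}\cdot(\text{dummy})$ is the wrong direction — rather, since the right-hand side $\max\{k^4,n_2^2\}\le k^{10}(\log k)^6$ say, and we are free to use $k\ge 500$ and $m_1>376$ is a \emph{hypothesis not a conclusion}, the inequality $2^{m_1-6}>\max\{k^4,n_2^2\}$ can only be claimed because $m_1>376$ forces, via $m_2\ge c\cdot m_1$ and $\log m_2 = O(\log k)$ hence $k = O(e^{m_1/?})$...

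\medskip
\emph{Revised, cleaner plan.} The genuinely correct mechanism is: $n_1=1$ gives $\delta=x_1+\sqrt{x_1^2-\epsilon}$ with $x_1=2^{m_1-2}$, so $\delta=2^{m_1-1}+O(2^{-(m_1-2)})$ precisely, hence $\log\delta = (m_1-1)\log 2 + \theta$ with $|\theta|<2^{-(m_1-4)}$. Plugging into Lemma~\ref{lemma11}'s bound on $m_2$ yields $m_2 < 2.6\times 10^{13}k^4(\log k)^2(m_1\log 2)(1+\log m_2)$; Lemma~\ref{gl} with $m=1$ then gives $m_2 < C k^4(\log k)^2 m_1 \log(k^4 m_1)$ for an absolute constant $C$. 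Therefore $1+\log m_2 < 5\log k + \log m_1 + C'$, and since $m_1\le k+1$ this is $< 6\log k$ for $k$ large. Then $n_2 < 1.7\times 10^{13}k^4(\log k)^2\cdot 6\log k = 1.02\times 10^{14}k^4(\log k)^3$, so $n_2^2 < 1.1\times 10^{28}k^8(\log k)^6$; and trivially $k^4 < k^8$. Thus $\max\{k^4,n_2^2\} < 1.1\times 10^{28}k^8(\log k)^6$. Now here is where $m_1>376$ enters and is the \textbf{main obstacle}: we need $2^{m_1-6} > 1.1\times 10^{28}k^8(\log k)^6$, but $m_1$ can be as small as $377$ while $k$ can be as large as we like with $m_1\le k+1$ — so this is \emph{false as stated} unless there is an additional constraint linking $k$ to $m_1$ from below. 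That constraint must be precisely $m_2\ge$ (something growing with $k$); since $n_2\ge 2$, $m_2+1\ge 2c_1\log\delta > 2c_1((m_1-1)\log2 - 1) \approx 2m_1$, which only bounds $m_2$ below by $m_1$, not $k$. Hence the paper must intend $m_1 > 376$ to interact with $k \le$ (something in terms of $m_1$): indeed from $m_1 \le k+1$ we only get $k \ge m_1 - 1$, the wrong direction. The resolution, which I would pursue, is that the paper probably wants to bound $k$ from \emph{above} in terms of $m_2$ hence $m_1$: since $x_{n_2}=F_{m_2}^{(k)}$ and $\delta\ge 2^{m_1-1}$ is huge, while $x_{n_2}<\delta^{n_2}$ forces nothing new — but $F_{m_2}^{(k)}\le \alpha^{m_2-1}<2^{m_2-1}$ and $x_{n_2}\ge \delta^{n_2}/\alpha^2 \ge 2^{n_2(m_1-1)}/4$, giving $2^{n_2(m_1-1)-2} < 2^{m_2-1}$, i.e. $n_2(m_1-1) < m_2+1$, combined with $m_2 < Ck^4(\log k)^2 m_1\log(k^4m_1)$ and $n_2\ge 2$ yields $2(m_1-1) < Ck^4(\log k)^2 m_1 \log(k^4 m_1)$, still the wrong direction for bounding $k$ by $m_1$.

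Given this genuine difficulty, my honest proof proposal is: reproduce the two inequalities of Lemma~\ref{lemma11} with $\log\delta$ replaced by its value $\approx(m_1-1)\log 2$, obtaining $n_2 = O(k^4(\log k)^3)$ and $m_2 = O(k^4(\log k)^3 m_1)$, and then invoke the \emph{hypothesis} $m_1 > 376$ together with $k > 500$ via a further, as-yet-unstated inequality in the paper (almost certainly one bounding $k$ in terms of $m_1$ coming from a Cooper--Howard / Gómez--Luca expansion of $F_{m_2}^{(k)}$ around $2^{m_2-2}$ that was not needed until now, forcing $m_1$ to be comparable to $k$); concretely I expect the next step after Lemma~\ref{lem:7} will actually derive $k$ close to $2m_1$ or so, and \emph{that} relation, plugged back, makes $2^{m_1-6}>\max\{k^4,n_2^2\}$ a routine \emph{Mathematica} check. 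So the structure I would write is: (1) $\log\delta=(m_1-1)\log2+\theta$, $|\theta|<2^{-(m_1-4)}$; (2) Lemma~\ref{lemma11} $\Rightarrow$ $m_2 < 3\times 10^{13}k^4(\log k)^2 m_1(1+\log m_2)$, then Lemma~\ref{gl} $\Rightarrow$ $m_2 < 10^{15}k^4(\log k)^2 m_1\log(k m_1) < 10^{16}k^4(\log k)^3 m_1$ using $m_1\le k+1$; (3) hence $n_2 < 10^{14}k^4(\log k)^3$ and $1+\log m_2 < 7\log k$; (4) so $\max\{k^4,n_2^2\}<10^{29}k^8(\log k)^6$; (5) and since (from the still-to-be-used relation, or from $m_1>376$ forcing $2^{m_1-6}>2^{370}$ against a \emph{separately bounded} $k$) the bound $2^{m_1-6}>10^{29}k^8(\log k)^6$ follows — the verification being the one computational inequality $2^{m_1-6}>10^{29}(m_1-1)^8(\log(m_1-1))^6$ valid for $m_1\ge 377$ once $k\le m_1-1$... wait, that is valid precisely because $k\le m_1-1$ is \emph{false}. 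I therefore flag that the load-bearing step, and the main obstacle, is establishing the missing upper bound $k < 2^{m_1/4}$ (or similar) which must come from a delicate $2$-adic or Cooper--Howard analysis of the second Pell solution; everything else is the routine bookkeeping sketched in (1)--(4), and I would present those steps in that order, leaving the $k$-versus-$m_1$ inequality as the crux.
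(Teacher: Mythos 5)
There is a genuine gap, and you have correctly diagnosed where it is but not found the missing mechanism. Your bookkeeping steps (1)--(4) are fine as far as they go, but the statement cannot be proved in the ``direct'' direction you attempt, precisely because nothing so far forces $k$ to be small compared to $m_1$ (only $m_1\le k+1$, the wrong way around). The paper's proof is by \emph{contradiction}: assume $2^{m_1-6}<\max\{k^4,n_2^2\}$. Then $\log\delta<m_1\log 2<\log\bigl(2^6\max\{k^4,n_2^2\}\bigr)$, which together with the bounds of Lemma \ref{lemma12} makes $\log\delta=O(\log k)$. The load-bearing step you were missing is then to apply the two-logarithm lower bound (Theorem \ref{Matveev12}, Laurent--Mignotte--Nesterenko) to the linear form $\Lambda=n_2\log\delta-(m_2-1)\log 2$, whose \emph{upper} bound $|\Lambda|<2^{-(2k/3-2)}$ comes from evaluating \eqref{eq:20} at $(n_2,m_2)$ and noting that $m_2\ge k+2$ (since $x_{n_2}$ is not a power of $2$ by Lemma \ref{lem:Luc}), so the minimum there is $2k/3-2$. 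Comparing the two bounds gives
$$
2k/3-2<195(\log\delta)\max\{\log(3m_2),10.5\}^2,
$$
and under the contradiction hypothesis the right-hand side is a power of $\log k$, forcing $k<4\times 10^9$ absolutely; hence $n_2<8.2\times 10^{14}k^4(\log k)^3<5\times 10^{55}$, hence $2^{m_1-6}\le\max\{k^4,n_2^2\}<(5\times 10^{55})^2$, hence $m_1<377$, contradicting the hypothesis $m_1>376$.

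So your instinct that ``the crux is a missing upper bound on $k$'' is right, and your guess that the Cooper--Howard/G\'omez--Luca expansion is involved is partially right (it supplies the upper bound $2^{-(2k/3-2)}$ for $\Lambda$), but the actual resolution is not a $2$-adic analysis establishing $k<2^{m_1/4}$ unconditionally; it is a Baker-type lower bound for a two-term linear form combined with the negation of the conclusion. Without that contradiction structure and the LMN input, your argument cannot close, as you yourself flag at the end.
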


\begin{proof}
Assume $m_1>376$. We evaluate \eqref{eq:20} in $(n,m):=(n_2,m_2)$. Further, by Lemma \ref{lem:Luc}, $x_{n_2}$ is not a power of $2$,  
so $m_2\ge k+2$, therefore $\min\{2k/3-2,m_2-2\}=2k/3-2$, getting
\begin{equation}
\label{eq:41}
|n_2\log \delta-(m_2-1)\log 2|<\frac{1}{2^{2k/3-2}}.
\end{equation}
We write a lower bound for the left--hand side using Theorem \ref{Matveev12}. Let
\begin{equation}
\label{eq:Lambda10}
\Lambda:=n_2\log \delta-(m_2-1)\log 2.
\end{equation}
We have
$$
\gamma_1=\delta,\quad \gamma_2=2,\quad b_1=n_2,\quad b_2=-(m_2-1).
$$
We have ${\mathbb K}:={\mathbb Q}(\delta)$ has $D=2$. Further, $h(\gamma_1)=(\log \delta)/2$ and $h(\gamma_2)=\log 2$. Thus, we can take $\log B_1=(\log \delta)/2$, $\log B_2=\log 2$, 
$$
b'=\frac{n_2}{2\log 2}+\frac{m_2-1}{\log \delta}<m_2\left(\frac{1}{2\log 2}+\frac{1}{\log(1+{\sqrt{2}})}\right)<2m_2.
$$
Further, Theorem \ref{Matveev12} is applicable since $\gamma_1,~\gamma_2$ are real positive and multiplicatively independent (this last condition follows because $\delta$ is a unit and $2$ isn't). 
Theorem \ref{Matveev12} shows that
$$
\log| \Lambda |>-24.34\cdot 2^4 E^2 (\log \delta/2) \log 2>-195 \log 2 (\log \delta) E^2,~~ E:=\max\{\log(3m_2), 10.5\}^2,
$$
where we used $\log(3m_2)>0.14+\log(2m_2)>0.14+\log b'$. Thus, 
\begin{equation}
\label{eq:51}
|\Lambda|>2^{-195 (\log \delta) E^2}.
\end{equation}
Comparing \eqref{eq:41} and \eqref{eq:51}, we get
\begin{equation}
\label{eq:boundLambda}
195 (\log \delta) E^2>2k/3-2.
\end{equation}
Since
$$
2^{m_1-1}=2x_1=\delta+\frac{\varepsilon}{\delta}>\frac{\delta}{2},
$$
we get $\delta<2^{m_1}$, so $\log \delta<m_1\log 2$. Thus, 
$$
(m_1\log 2) (195E^2)>2k/3-2.
$$
Now let us assume that in fact the inequality $2^{m_1-6}<\max\{k^4,n_2^2\}$ holds. Assume first that the above maximum is $n_2^2$. Then $m_1\log 2<\log (2^6n_2^2)$. We thus get that
$$
2k/3-2<195\log(64n_2^2) E^2.
$$
Since by Lemma \ref{lemma12}, $64n_2^2 < 64\times 8.2^2\times 10^{28} k^8(\log k)^6$, and $3m_2 < 12.3\times 10^{22} k^7 (\log k)^5$, we get that
$$
2k/3-2<195 \log (64\times 8.2^2\times 10^{28} k^8(\log k)^6) \max\{10.5, \log(12.3\times 10^{22} k^7 (\log k)^5)\}^2,
$$
which gives $k<4\times 10^{9}$. Thus, 
$$
n_2<8.2 \times 10^{14} k^4 (\log k)^3<5\times 10^{55},
$$
and since 
$$
2^{m_1-6}\le n_2^2<(5\times 10^{55})^2,
$$
we get $m_1<6+2(\log 5\times 10^{55})/(\log 2)<377$, contradicting the fact that $m_1>376$. This was in the case $n_2\ge k^2$. But if $n_2<k^2$, then $\max\{n_2^2,k^4\}=k^4$ and the same argument gives us an even smaller bound on $k$; hence, on $m_1$. This contradiction finishes the proof of this lemma.
\end{proof}

\subsection{We have $m_2-1=n_2(m_1-1)$}

The aim of this subsection is to prove the following result.

\begin{lem}
\label{n1=1}
If $k>500$ and $m_1>376$, then $n_2(m_1-1)=m_2-1$. 
\end{lem}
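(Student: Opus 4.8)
The plan is to mimic the structure of the previous section's argument (Lemmas \ref{lem13} and \ref{lem:maink>500}), but now working one level deeper in the asymptotic expansion of $F_m^{(k)}$. We already know from Lemma \ref{lem:maink>500} that $n_1=1$ and $m_1\le k+1$, so $x_{n_1}=F_{m_1}^{(k)}=2^{m_1-2}$ exactly, and from Lemma \ref{lem:Luc} that $m_2\ge k+2$. First I would record the two logarithmic inequalities \eqref{eq:3} for the pairs $(1,m_1)$ and $(n_2,m_2)$: since $n_1=1$ and $m_1\le k+1$ the first reads $|\log\delta-(m_1-1)\log 2|<1/2^{m_1-2}$ (the minimum in the exponent is $m_1-2$ here, which is what makes Lemma \ref{lem:7} useful), and the second reads $|n_2\log\delta-(m_2-1)\log 2|<1/2^{2k/3-2}$. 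Eliminating $\log\delta$ (multiply the first by $n_2$ and subtract the second) gives
\begin{equation*}
|(n_2(m_1-1)-(m_2-1))\log 2|<\frac{n_2}{2^{m_1-2}}+\frac{1}{2^{2k/3-2}}.
\end{equation*}

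The whole point is now a size comparison. By Lemma \ref{lem:7} we have $2^{m_1-6}>n_2^2$, hence $2^{m_1-2}>16 n_2^2>16 n_2$, so $n_2/2^{m_1-2}<1/16$; and since $m_1\le k+1<3k/2$ one checks $2^{2k/3-2}$ is comfortably large, so the second term is tiny as well. Thus the right-hand side above is strictly less than $\log 2$. Since the left-hand side is $|\text{integer}|\log 2$, it must vanish, giving $n_2(m_1-1)=m_2-1$, which is exactly the claim. The main obstacle — really a bookkeeping point rather than a genuine difficulty — is making sure the exponent $2k/3-2$ in the second inequality is actually correct, i.e.\ that the minimum $\min\{2k/3-2,m_2-2\}$ in \eqref{eq:3} equals $2k/3-2$; this needs $m_2\ge 2k/3$, which follows from $m_2\ge k+2$ (Lemma \ref{lem:Luc}, since $x_{n_2}$ is not a power of $2$), and one also needs $k>500$ together with Lemma \ref{lemma12}'s bound on $n_2$ to guarantee that $n_2/2^{2k/3-2}$ (which is what you'd get if instead you kept the weaker of the two exponents) is small — but here we only need it for the $(1,m_1)$ term, where the exponent is $m_1-2$, and Lemma \ref{lem:7} was designed precisely to control $n_2$ against $2^{m_1-6}$.

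So concretely the steps are: (1) invoke $n_1=1$, $m_1\le k+1$, $m_2\ge k+2$ from the previous section; (2) write \eqref{eq:3} for both pairs, noting the exponents are $m_1-2$ and $2k/3-2$ respectively; (3) cross-multiply and subtract to bound $|n_2(m_1-1)-(m_2-1)|\log 2$ by $n_2/2^{m_1-2}+1/2^{2k/3-2}$; (4) use Lemma \ref{lem:7} to get $n_2/2^{m_1-2}<1/16$ and a trivial estimate (using $k>500$) for the other term; (5) conclude the right-hand side is $<\log 2$, so the integer $n_2(m_1-1)-(m_2-1)$ is zero. I expect no real obstacle; the estimate $2^{m_1-6}>n_2^2$ from Lemma \ref{lem:7} is tailored to exactly this purpose, and everything else is elementary.
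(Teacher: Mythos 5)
Your proof is correct, but it takes a genuinely different route from the paper's. The paper proves this lemma by writing $2x_{n_2}=\delta^{n_2}+(\epsilon/\delta)^{n_2}$ as a Dickson-polynomial expansion in powers of $2^{2(m_1-1)}$ (using $\delta+\epsilon/\delta=2x_1=2^{m_1-1}$), i.e.\ $2x_{n_2}=2^{(m_1-1)n_2}(1+\zeta_{n_2}')$, and compares it with $2F_{m_2}^{(k)}=2^{m_2-1}(1+\zeta_{m_2})$; since $|\zeta_{n_2}'|<1/2^{1.5m_1}<1/4$ (via Lemma \ref{lem:7}) and $|\zeta_{m_2}|<1/2^{2k/3}<1/4$, unequal leading exponents would force $|2^{(m_1-1)n_2}-2^{m_2-1}|\ge R/2$ on the left but $<R/2$ on the right. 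Your argument instead eliminates $\log\delta$ between the two linear forms in $\log\delta,\log 2$ and uses integrality of the coefficient of $\log 2$ — essentially rerunning the proof of Lemma \ref{lem13}, with Lemma \ref{lem:7} ($2^{m_1-2}>16n_2^2>4n_2$) now killing the first branch of that dichotomy. Both proofs rest on the same two inputs (Lemma \ref{lem:7} and $m_2\ge k+2$ via Lemma \ref{lem:Luc}), and yours is shorter; the paper's choice pays off only later, since the Dickson expansion with its higher-order coefficients $\gamma_{n_2}',\eta_{n_2}'$ is reused verbatim in the $n_2\ge 5$ analysis, where matching leading exponents is not enough. One small point to tidy: your parenthetical claim that the minimum in \eqref{eq:3} equals $m_1-2$ for the pair $(1,m_1)$ is not literally forced by \eqref{eq:3} (that would need $m_1\le 2k/3$); the stated bound $|\log\delta-(m_1-1)\log 2|<1/2^{m_1-2}$ is nevertheless valid because $F_{m_1}^{(k)}=2^{m_1-2}$ exactly, so $\zeta_{m_1}=0$ and $|\delta-2^{m_1-1}|=1/\delta$ gives the sharper estimate directly — and, as you note, in the complementary regime the generic exponent $2k/3-2$ still suffices by \eqref{eq:16}.
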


For the proof, we write
\begin{eqnarray*}
2x_{1}=\delta+\frac{\epsilon}{\delta}=2F_{m_1}^{(k)}=2^{m_1-1};\\
2x_{n_2}=\delta^{n_2}+\left(\frac{\epsilon}{\delta}\right)^{n_2}=2F_{m_2}^{(k)}.
\end{eqnarray*}
Thus, 
\begin{eqnarray*}
2F_{m_2}^{(k)} & = & \sum_{i=0}^{\lfloor n_2/2\rfloor} \frac{n_2}{n_2-i} \binom{n_2-i}{i} (-\epsilon)^i 2^{(m_1-1)(n_2-2i)}\\
& = & 2^{(m_1-1)n_2} \left(1+\sum_{i=1}^{\lfloor n_2/2\rfloor} \frac{n_2}{n_2-i}\binom{n_2-i}{i}\left(-\frac{\epsilon}{2^{2(m_1-1)}}\right)^{i}\right).
\end{eqnarray*}
Note that
$$
\frac{n_2}{n_2-i}\binom{n_2-i}{i}<n_2^i.
$$
Thus, 
\begin{equation}
\label{eq:6}
\left|\frac{n_2}{n_2-i}\binom{n_2-i}{i}\left(-\frac{\epsilon}{2^{2(m_1-1)}}\right)^{i}\right|<\left(\frac{n_2}{2^{2(m_1-1)}}\right)^i.
\end{equation}
Since $m_1>376$, we have $2^{m_1-6}>n_2^2$ by Lemma \ref{lem:7}. In this case, \eqref{eq:6} tells us that
\begin{equation}
\label{eq:7}
\left|\frac{n_2}{n_2-i}\binom{n_2-i}{i}\left(-\frac{\epsilon}{2^{2(m_1-1)}}\right)^{i}\right|<\frac{1}{2^{1.5 m_1 i}} \left(\frac{n_2}{2^{0.5m_1-2}}\right)^i<\frac{1}{2^{1.5 m_1 i}}\left(\frac{1}{2^i}\right).
\end{equation}

Combining \eqref{eq:7} with \eqref{eq:zeta}, 
\begin{eqnarray*}
2x_{n_2} & = & 2^{(m_1-1)n_2} \left(1+\sum_{i=1}^{\lfloor n_2/2\rfloor} \frac{n_2}{n_2-i}\binom{n_2-i}{i}\left(-\frac{\epsilon}{2^{2(m_1-1)}}\right)^{i}\right)\\
& := & 2^{(m_1-1)n_2}(1+\zeta_{n_2}')\\ 
2F_{m_2}^{(k)} & = & 2^{m_2-1}\left(1+\zeta_{m_2}\right),
\end{eqnarray*}
where
$$
\zeta_{n_2}':=\sum_{i=1}^{\lfloor n_2/2\rfloor} \frac{n_2}{n_2-i}\binom{n_2-i}{i}\left(-\frac{\epsilon}{2^{2(m_1-1)}}\right)^{i}.
$$
Since $2x_{n_2}=2F_{m_2}^{(k)}$, we then have
$$
|2^{(m_1-1)n_2}-2^{m_2-1}|\le 2^{(m_1-1)n_2}|\zeta_{n_2}'|+2^{m_2-1} |\zeta_{m_2}|.
$$
If $(m_1-1)n_2\ne m_2-1$, then putting $R:=\max\{2^{(m_1-1)n_2},2^{m_2-1}\}$, the left--hand side above is $\ge R/2$, while the right-side above is $<R/2$, since 
$$
|\zeta_{m_2}|<\frac{1}{2^{2k/3}}<\frac{1}{4}\quad {\text{\rm and}}\quad |\zeta_{n_2}'|<\sum_{i\ge 1} \frac{1}{2^{1.5 m_1 i}}\left( \frac{1}{2^i}\right)<\frac{1}{2^{1.5 m_1}}\sum_{i\ge 1} \frac{1}{2^i}<\frac{1}{2^{1.5 m_1}}<\frac{1}{4}.
$$
This contradiction shows that $m_2-1=n_2(m_1-1)$, which finishes the proof of Lemma \ref{n1=1}.

\subsection{The case $n_2=2$}

By Lemma  \ref{n1=1}, we get $m_2=2m_1-1$. Since $m_1\le k+1$, we get that $m_2\le 2k+1$. Also, $m_2\ge k+2$. By Example \ref{exa:1} (ii), we have
$$
F_{m_2}^{(k)}=2^{m_2-2}-(m_2-k)2^{m_2-k-3}=x_2=2x_1^2-\epsilon=2(2^{m_1-2})^2-\epsilon.
$$
We thus get
$$
2^{2m_1-3}-(2m_1-k-1)2^{2m_1-k-4}=2^{2m_1-3}-\epsilon.
$$
We get that the $\epsilon=1$, and further $(2m_1-k-1)2^{2m_1-k-4}=1$, so $m_1=(k+3)/2$. This gives the parametric family (i) from Theorem \ref{Main}. 

\subsection{The case $n_2=3$}

By Lemma \ref{n1=1}, we get $m_2=3(m_1-1)+1=3m_1-2$. Since $m_1\le k+1$, we get that $m_2=3m_1-2\le 3k+1$. Further, $m_2\ge k+2$. If $m_2\in [k+2,2k+2]$, then, by Example \ref{exa:1} (ii), 
we have
$$
F_{m_2}^{(k)}=2^{m_2-2}-(m_2-k)2^{m_2-k-3}=x_3=4x_1^3-3\epsilon x_1=4(2^{m_1-2})^3-3\epsilon 2^{m_1-2},
$$
so $\epsilon=1$, and $(3m_1-k-2)2^{3m_1-k-5}=3 \times 2^{m_1-2}$. This gives
$$
(3m_1-k-2) 2^{2m_1-k-3}=3.
$$
By unique factorisation, we get
$$
3m_1-k-2=3\times 2^{a}\qquad {\text{\rm and}}\qquad 2m_1-k-3=-a
$$
for some integer $a\ge 0$. Solving, we get 
\begin{eqnarray*}
m_1 & = & 3\times 2^{a}+a-1,\\
k & = & 3\times 2^{a+1}+3a-5,
\end{eqnarray*}
and then $m_2=3m_1-2=9\times 2^{a}+3a-5$. The case $a=0$ gives $k=1$, which is not convenient so $a\ge 1$. 
This is the parametric family (ii). 

It can also be the case that $m_2\in [2k+3,3k+1]$. By Example \ref{exa:1} (iii), we get
$$
4(2^{m_1-2})^3-3\epsilon 2^{m_1-2}=2^{m_2-2}-(m_2-k)2^{m_2-k-3}-(m_2-2k+1)(m_2-2k-2) 2^{m_2-2k-5}.
$$
This leads to
$$
3\epsilon 2^{m_1-2}=(3m_1-k-2) 2^{3m_1-k-5}-(3m_1-2k-1)(3m_1-2k-4)2^{3m_1-2k-7}.
$$
Simplifying $2^{3m_1-2k-7}$ from both sides of the above equation we get
$$
3\epsilon 2^{2k+5-2m_1}=(3m_1-k-2)2^{k+2}-(3m_1-2k-1)(3m_1-2k-4).
$$
Since $m_2=3m_1-2\ge 2k+3$, it follows that $m_2\ge (2k+5)/3$, so $2k+5-2m_1\le (2k+5)/3$. It thus follows, by the absolute value inequality, that
\begin{eqnarray*}
2^{k+2} & < & (3m_1-k-2)2^{k+2}\le 3\cdot 2^{2k+5-2m_1}+(3m_1-2k-1)(3m_1-2k-4)\\
& \le & 3\cdot 2^{(2k+5)/3}+(k+2)(k-1),
\end{eqnarray*}
an inequality which fails for $k\ge 5$. Thus, there are no other solutions in this range for $n_2=3$ except for the ones indicated in (ii) 
of Theorem \ref{Main}.

\subsection{The case $n_2=4$}
In this case, we have $m_2=4(m_1-1)+1=4m_1-3$. Since $m_1\le k+1$, we have $m_2\le 4k+1$. Note that
\begin{equation}
\label{eq:x4}
x_4=2x_2^2-1=2(2x_1^2-\epsilon)^2-1 =  8x_1^4-8\epsilon x_1^2+1 =  8(2^{m_1-2})^4-8\epsilon (2^{m_1-2})^2+1
\end{equation}
is odd. Assume first that $m_2\in [k+2,2k+2]$. We then have, by Example \ref{exa:1},
\begin{equation}
\label{eq:m21}
F_{m_2}^{(k)} = 2^{m_2-2}-(m_2-k)2^{m_2-k-3}=2^{4m_1-5}-(4m_1-k-3)2^{4m_1-k-6}.
\end{equation}
Comparing \eqref{eq:m21} with \eqref{eq:x4}, we get
$$
(4m_1-k-3)2^{4m_1-k-6}=\epsilon 2^{2m_1-1}-1.
$$
First, $\epsilon=1$. Second, the right--hand side above is odd. This implies that the left--hand side is also odd. Thus, the left--hand side is in $\{1,3\}$. This is impossible since the right--hand side is at least $2^{753}$.  Thus, this instance does not give us any solution. 

Assume next that $m_2\in [2k+3,3k+3]$. Then
\begin{eqnarray*}
F_{m_2}^{(k)} & = & 2^{m_2-2}-(m_2-k)2^{m_2-k-3}+(m_2-2k+1)(m_2-2k-2)2^{m_2-2k-5}\\
& = & 8(2^{m_1-2})^4-8\epsilon (2^{m_1-2})^2+1.
\end{eqnarray*}
Identifying, we get
$$
(4m_1-k-3)2^{4m_1-k-6}-(4m_1-2k-2)(4m_1-2k-5) 2^{4m_1-2k-8}=\epsilon 2^{2m_1-1}-1.
$$
Note that $4m_1-2k-8$ is even. If $4m_1-2k-8\ge 0$, then the left--hand side is even and the right--hand side is odd, a contradiction. 
Thus, we must have $4m_1-2k-8=-2$. This gives $4m_1=2k+6$, so $m_1=(k+3)/2$. We thus get
$$
(k+3)2^{k}-1=\epsilon 2^{k+2}-1.
$$
This implies that $\epsilon=1$ and $(k+3) 2^{k}=2^{k+2}$, which leads to $k+3=4$, so $k=1$, which is impossible. Thus, this instance does not give us a solution either. 

Assume finally that $m_2\in [3k+4,4k+1]$. Applying the Cooper-Howard formula from Lemma \ref{teoHoward}, we get
$$
F_{m_2}^{(k)}=2^{m_2-2}+\sum_{j=1}^3 C_{m_2,j}2^{m_2-(k+1)j-2}.
$$
Eliminating the main term in the equality $F_{m_2}^{(k)}=x_4$ and changing signs in the remaining equation, we get
\begin{equation}
\label{eq:xxx}
\sum_{j=1}^3 -C_{m_2,j} 2^{m_2-(k+1)j-2}=\epsilon 2^{2m_1-1}-1.
\end{equation}
At $j=3$, the exponent of $2$ is $m_2-3j-5$. If this is positive,
the left hand side is even and the right--hand side is odd, a contradiction. Thus, $m_2\in \{3k+4,3k+5\}$. In this case,
$$
-C_{m_2,3} 2^{m_2-3k-5}=\left(\binom{m_2-3k}{3}-\binom{m_2-3k-2}{1}\right)2^{m_2-3k-5}\in \{1,7\}.
$$
For $j\in \{1,2\}$, $m_2-j(k+1)-2\ge m_2-2k-4\ge k>500$. Thus, the left--hand side in \eqref{eq:xxx} is congruent to $1,7\pmod {2^{500}}$, while the right--hand side 
of \eqref{eq:xxx} is congruent to $-1\pmod {2^{500}}$ because $m_1>500$. We thus get $1,7\equiv -1\pmod {2^{500}}$, a contradiction. Hence, there are no solutions with $n_2=4$.

\subsection{The case $n_2\ge 5$}

The goal here is to prove the following result.

\begin{lem}
If $k>500$ and $m_1>376$, then there is no solution with $n_2\ge 5$.
\end{lem}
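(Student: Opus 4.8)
The plan is to play the Dickson--polynomial expansion of $2F_{m_2}^{(k)}=2x_{n_2}$ against its Cooper--Howard expansion, using the rigid relation $m_2-1=n_2(m_1-1)$ of Lemma~\ref{n1=1} and the fact (Lemma~\ref{lem:maink>500}) that $n_1=1$ and $2x_1=2^{m_1-1}$. On one side, the Dickson identity of Subsection~\ref{subs:Pell} (Example~\ref{exa:2}) gives
$$
2F_{m_2}^{(k)}=D_{n_2}(2^{m_1-1},\epsilon)=\sum_{i=0}^{\lfloor n_2/2\rfloor}\frac{n_2}{n_2-i}\binom{n_2-i}{i}(-\epsilon)^i\,2^{(m_1-1)(n_2-2i)},
$$
an expansion in descending powers of $2^{m_1-1}$ with leading term $2^{(m_1-1)n_2}=2^{m_2-1}$ and small integer coefficients. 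On the other side, since $x_{n_2}$ is not a power of $2$ (Lemma~\ref{lem:Luc}) we have $m_2\ge k+2$, so the Cooper--Howard formula (Lemma~\ref{teoHoward}) gives $2F_{m_2}^{(k)}=2^{m_2-1}+\sum_{j\ge1}C_{m_2,j}\,2^{m_2-(k+1)j-1}$. Writing $F_{m_2}^{(k)}=2^{m_2-2}(1+\zeta_{m_2})$ and matching the two expansions yields the exact identity
$$
\zeta_{m_2}=\sum_{j\ge1}C_{m_2,j}\,2^{-(k+1)j}=\sum_{i\ge1}\frac{n_2}{n_2-i}\binom{n_2-i}{i}(-\epsilon)^i\,2^{-2(m_1-1)i}=:\zeta'_{n_2},
$$
the heart of the matter: the left side lives on a ``grid'' of $2$-power scales spaced by $2^{-(k+1)}$, the right side on a grid spaced by $2^{-2(m_1-1)}$.

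First I would locate $m_1$. Comparing the leading correction on each side, and using the tail bound $|\zeta'_{n_2}+n_2\epsilon\,2^{-2(m_1-1)}|<2n_2^2\,2^{-4(m_1-1)}$ (legitimate because $n_2^2<2^{m_1-6}$, Lemma~\ref{lem:7}), one gets $|\zeta_{m_2}|=|\zeta'_{n_2}|>2^{-2(m_1-1)}$ from $n_2\ge5$, whereas $|\zeta_{m_2}|\le(m_2-k)2^{-(k+1)}+2m_2^2\,2^{-2(k+1)}<2^{-2k/3}$ since $8m_2^3<2^k$ by~\eqref{eq:1}. Hence $m_1>k/3+1$; running the comparison in the opposite direction (an upper bound on $|\zeta'_{n_2}|$ against $|\zeta_{m_2}|\ge2\cdot2^{-(k+1)}$, using $m_2\ge k+2$) gives $m_1<2(k+1)/3$. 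Thus $m_1-1$ is pinned to the window $(k/3,\,2k/3)$; in particular every ``tail'' series above is genuinely negligible and $\delta<2^{m_1}$.

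The decisive step is to read the identity $\zeta_{m_2}=\zeta'_{n_2}$ modulo the successive powers $(2x_1)^{2s}=2^{2s(m_1-1)}$, $s=1,2,\dots$, which on the Dickson side peels off the last $\approx s$ terms (for $s=1$ just the single term $2(-\epsilon)^{n_2/2}$, resp.\ $n_2(-\epsilon)^{(n_2-1)/2}2^{m_1-1}$, according as $n_2$ is even or odd) and on the Cooper--Howard side retains only the terms $C_{m_2,j}2^{m_2-(k+1)j-1}$ with $(k+1)j>(n_2-2s)(m_1-1)$. Since $m_1-1\asymp k/3$ and $m_2-1=n_2(m_1-1)$, the surviving Cooper--Howard indices form a short block of about $2s/3$ values of $j$ just below $J-1:=\lfloor(m_2+k)/(k+1)\rfloor-1$, where $C_{m_2,j}=(-1)^j\big[\binom{j+r}{j}-\binom{j+r-2}{j-2}\big]$ with $1\le r=m_2-j(k+1)<k+2$ is of polynomial size in $j\asymp n_2/3$. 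Matching these two short, explicit blocks — comparing either the coefficients directly (leading, exactly as in the cases $n_2\le4$, to a Diophantine equation solved by unique factorisation) or the $2$-adic valuations of the two sides — drives $n_2$ down; iterating the peeling in $s$ eliminates every remaining branch and contradicts $n_2\ge5$. As an immediate by-product of the first step one reads off $\epsilon=1$.

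The hard part will be the bookkeeping of the modular reduction on the Cooper--Howard side: pinning down exactly which $j$ survive modulo each $2^{2s(m_1-1)}$ (this is where the window $k/3<m_1<2k/3$ and $m_2-1=n_2(m_1-1)$ get used), computing $v_2(C_{m_2,j})$ for the surviving $j$ via Kummer's theorem so as to rule out accidental cancellation when two blocks coexist, and handling the parity of $n_2$ and the sign $\epsilon$ separately. A secondary, routine matter is making the tail estimates for both $\zeta_{m_2}$ and $\zeta'_{n_2}$ completely explicit, which — as in the preceding subsections — uses $2^{m_1-6}>n_2^2$ and $8m_2^3<2^k$ in tandem.
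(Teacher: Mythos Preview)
Your setup is right and matches the paper's: compare the Dickson expansion of $2x_{n_2}=D_{n_2}(2^{m_1-1},\epsilon)$ with the Cooper--Howard expansion of $2F_{m_2}^{(k)}$, using $m_2-1=n_2(m_1-1)$. But the ``decisive step'' you describe is not a proof; it is a plan whose execution is unclear, and the paper in fact proceeds quite differently.

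The paper does \emph{not} reduce modulo successive powers $2^{2s(m_1-1)}$ or appeal to $2$-adic valuations of the $C_{m_2,j}$. Instead it matches terms \emph{exactly}, top-down. Matching the second terms gives the precise identity $(m_2-k)/2^{k+1}=\epsilon n_2/2^{2(m_1-1)}$ (hence $\epsilon=1$), and then a divisibility argument --- if $2(m_1-1)\ge k+1$ then $m_2-k\mid m_2-1$ forces $m_2\le 2k-1$, contradicting $m_2\ge 2k+3$ --- pins $m_1$ down to $2(m_1-1)<k+1$, tighter than your window $m_1<2(k+1)/3$. Matching the third terms then gives the exact relation $(m_2-2k+1)(m_2-2k-2)=2^{2b}n_2(n_2-3)$ with $b=(k+1)-2(m_1-1)>0$. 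Together with $m_2-1=n_2(m_1-1)$ and $n_2=(m_2-k)/2^b$ this is a closed Diophantine system in integers, which the paper solves by hand: $n_2\mid k-1$, $2^b<k$ so $b<3\log k$ and $m_1\in((k+3-3\log k)/2,(k+3)/2)$; then writing $n_2=(k-1)/D$ and tracking the two integer factors on the left forces $D\le 11$, then $(m_2-2k-2)/2^{2b}\in\{1,2\}$, and each branch collapses to $b\in\{0,1\}$ with a numerical contradiction.

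Your bottom-up modular peeling, by contrast, mixes two incommensurable grids (steps $2^{-(k+1)}$ versus $2^{-2(m_1-1)}$ with $2(m_1-1)<k+1$), so each reduction modulo $2^{2s(m_1-1)}$ retains a moving block of Cooper--Howard terms whose number and position depend on the unknown ratio $(k+1)/(2(m_1-1))$; you have not shown how the iteration terminates or why no cancellation occurs. The paper's exact matching avoids this entirely by producing, after only two steps, a finite system that is solved outright.
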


We write again the two series for $2x_{n_2}=2F_{m_2}^{(k)}$:
$$
2F_{m_2}^{(k)}=2^{m_2-1}\left(1+\frac{k-m_2}{2^{k+1}}+\gamma_{m_2}\right)=2^{n_2(m_1-1)}\left(1+\frac{-\epsilon n_2}{2^{2(m_1-1)}}+\gamma_{n_2}'\right),
$$
where
$$
|\gamma_{m_2}|<\frac{1}{2^{4k/3}}\qquad {\text{\rm and}}\qquad |\gamma_{n_2}'|\le \sum_{i\ge 2} \frac{1}{2^{1.5 m_1 i}}\left(\frac{1}{2^i}\right)<\frac{1}{2^{3m_1}}.
$$
By Lemma \ref{n1=1}, we have $m_2-1=n_2(m_1-1)$ so the leading powers of $2$ above cancel, and we get
$$
\frac{k-m_2}{2^{k+1}}+\gamma_{m_2}=\frac{-\epsilon n_2}{2^{2(m_1-1)}}+\gamma_{n_2}'.
$$
We would like to derive that this implies that 
\begin{equation}
\label{eq:71}
\frac{k-m_2}{2^{k+1}}=\frac{-\epsilon n_2}{2^{2(m_1-1)}}.
\end{equation}
Well, we distinguish two cases. 

\medskip

{\bf Case 1.} {\it Suppose that $2(m_1-1)\ge k+1$.}

\medskip 

We then write
\begin{equation}
\label{eq:8}
\left|\frac{k-m_2}{2^{k+1}}+\frac{n_2\epsilon}{2^{2(m_1-1)}}\right|\le |\gamma_{m_2}|+|\gamma_{n_2}'|\le \frac{1}{2^{4k/3}}+\frac{1}{2^{3m_1}}.
\end{equation}
Since $2m_1\ge k+3$, we get $3m_1>3k/2>4k/3$. Thus,
\begin{equation}
\label{eq:x}
\left|\frac{k-m_2}{2^{k+1}}+\frac{n_2\epsilon}{2^{2(m_1-1)}}\right|\le \frac{2}{2^{4k/3}}.
\end{equation}
Suppose further that $m_1\le 2k/3$. Multiplying inequality \eqref{eq:x} across by $2^{2(m_1-1)}$, we get
$$
|2^{2(m_1-1)-(k+1)} (k-m_2)+\epsilon n_2|\le \frac{2^{2m_1-1}}{2^{4k/3}}\le \frac{1}{2},
$$
and since the left--hand side above is an integer, it must be the zero integer. This proves \eqref{eq:71} in the current case assuming that $m_1\le 2k/3$. If $m_1>2k/3$, we deduce from \eqref{eq:x}
that
$$
\frac{m_2-k}{2^{k+1}}<\frac{2}{2^{4k/3}}+\frac{n_2}{2^{2(m_1-1)}}<\frac{2+4n_2}{2^{4k/3}}<\frac{5n_2}{2^{4k/3}}<\frac{1}{2^{13k/12}},
$$
where in the right--above we used the fact that $8n_2<2^{k/4}$ (see \eqref{eq:16}). We thus get
$$
2\le m_2-k<\frac{2^{k+1}}{2^{13k/12}}<\frac{2}{2^{k/12}}<1,
$$
where the right--most inequality holds since $k>500$. This is a contradiction, so the $m_1>2k/3$ cannot occur in this case. This completes the proof of \eqref{eq:71} in Case 1.

\medskip

{\bf Case 2.} {\it Assume that $2(m_1-1)<k+1$.} 

\medskip

We then write
$$
\frac{n_2}{2^{2(m_1-1)}}\le \frac{m_2-k}{2^{k+1}}+|\gamma_{m_2}|+|\gamma_{n_2}'|.
$$
Since $|\gamma_{m_2}|<1/2^{4k/3}<1/2^{k+1}$ and $|\gamma_{n_2}'|\le 1/2^{3m_1}<1/2^{2(m_1-1)}$, we get that 
$$
\frac{1}{2^{2(m_1-1)}}<\left|\frac{n_2-1}{2^{2(m_1-1)}}\right|\le \frac{n_2}{2^{2(m_1-1)}}-|\gamma_{n_2}'|\le \frac{m_2-k}{2^{k+1}}+|\gamma_{m_2}|<\frac{m_2}{2^{k+1}}, 
$$
where we also used that $n_2>1$ and $k\ge 2$. Thus, 
$$
2^{k+1-2(m_1-1)}<m_2.
$$
We now go back to \eqref{eq:8} and write that
$$
\left|\frac{k-m_2}{2^{k+1}}+\frac{n_2\epsilon}{2^{2(m_1-1)}}\right|<\frac{2}{2^{\min\{4k/3,3m_1\}}}.
$$
We multiply across by $2^{k+1}$ getting
$$
|(k-m_2)+2^{k+1-2(m_1-1)}\epsilon n_2|<\frac{2^{k+2}}{2^{\min\{4k/3,3m_1\}}}.
$$
If the minimum on the right above is $4k/3$, then the right--hand side above is smaller than $4/2^{k/3}<1/2$ since $k$ is large, so the number on the left is zero. 
If the minimum is $3m_1$, on the right above then 
$$
|(k-m_2)+2^{k+1-2(m_1-1)}\epsilon n_2| < \frac{1}{2} \left(\frac{2^{k+1-2(m_1-1)}}{2^{m_1}}\right).
$$
Since 
$$
2^{k+1-2(m_1-1)}<m_2=n_2(m_1-1)<kn_2\le \max\{k^2,n_2^2\}<2^{m_1-6}<2^{m_1}
$$
(here, we used Lemma \ref{lem:7} for the inequality in the right--hand side above), it follows that 
$$
|(k-m_2)+2^{k+1-2(m_1-1)}\epsilon n_2| <\frac{1}{2},
$$
so again the left--hand side is $0$. Since $m_2>k$, this implies that $\epsilon=1$. We record what we just proved.

\begin{lem}
\label{lem:20}
If $k>500$, $m_1>376$ and $n_2\ge 5$, then $m_1\le k+1$, $n_1=1$, $\epsilon=1$, $m_2-1=n_2(m_1-1)$ and 
$$
\frac{m_2-k}{2^{k+1}}=\frac{n_2}{2^{2(m_1-1)}}.
$$
\end{lem}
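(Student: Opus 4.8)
The plan is to read three of the five conclusions directly off results already proved, and to extract the remaining two — $\epsilon=1$ and the displayed identity — by matching two asymptotic expansions of $2x_{n_2}$. Since $k>500$, Lemma~\ref{lem:maink>500} gives $m_1\le k+1$ and $n_1=1$, so $x_{n_1}=F_{m_1}^{(k)}=2^{m_1-2}$ and $2x_1=2^{m_1-1}$; since $m_1>376$, Lemma~\ref{n1=1} gives $m_2-1=n_2(m_1-1)$; and since $n_2\ge 5$, Lemma~\ref{lem:Luc} shows $x_{n_2}=F_{m_2}^{(k)}$ is not a power of $2$, so $m_2\ge k+2$ and in particular $m_2>k$. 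Thus only $\epsilon=1$ and $\tfrac{m_2-k}{2^{k+1}}=\tfrac{n_2}{2^{2(m_1-1)}}$ remain, the latter being \eqref{eq:71} rewritten once the sign of $\epsilon$ is known.

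First I would expand $2x_{n_2}$ in two ways. From the Cooper--Howard side via Lemma~\ref{Gomez} (applicable because $m_2<2^{k/3-1}$ by \eqref{eq:1} and $m_2\ge k+2$, so that the first-order term is present), one gets $2x_{n_2}=2^{m_2-1}\bigl(1+\tfrac{k-m_2}{2^{k+1}}+\gamma_{m_2}\bigr)$ with $|\gamma_{m_2}|<2^{-4k/3}$. From the Dickson-polynomial side, $2x_{n_2}=\delta^{n_2}+(\epsilon/\delta)^{n_2}=D_{n_2}(2^{m_1-1},\epsilon)$; factoring out the leading $(2^{m_1-1})^{n_2}$ yields $2x_{n_2}=2^{(m_1-1)n_2}\bigl(1+\tfrac{-\epsilon n_2}{2^{2(m_1-1)}}+\gamma_{n_2}'\bigr)$, where $\gamma_{n_2}'$ collects the Dickson terms with $i\ge 2$. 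The decisive input here is Lemma~\ref{lem:7}, namely $2^{m_1-6}>n_2^2$, which lets me bound the $i$-th term by $2^{-1.5m_1 i}2^{-i}$ and hence $|\gamma_{n_2}'|<2^{-3m_1}$. Since $m_2-1=n_2(m_1-1)$, the leading powers $2^{m_2-1}$ and $2^{(m_1-1)n_2}$ agree and cancel, leaving the approximate equality $\tfrac{k-m_2}{2^{k+1}}+\gamma_{m_2}=\tfrac{-\epsilon n_2}{2^{2(m_1-1)}}+\gamma_{n_2}'$.

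The core of the argument is promoting this to the exact identity \eqref{eq:71}, which I would do by splitting on the sign of $2(m_1-1)-(k+1)$. If $2(m_1-1)\ge k+1$, then (using $3m_1>4k/3$) the approximate equality gives $\bigl|\tfrac{k-m_2}{2^{k+1}}+\tfrac{\epsilon n_2}{2^{2(m_1-1)}}\bigr|<2^{1-4k/3}$, and I sub-split. When $m_1\le 2k/3$, multiplying through by $2^{2(m_1-1)}$ produces an integer of size at most $2^{2m_1-1}/2^{4k/3}\le\tfrac12$, hence $0$, which is \eqref{eq:71}. When $m_1>2k/3$, a direct estimate using $8n_2<2^{k/4}$ from \eqref{eq:16} forces $2\le m_2-k<2^{1-k/12}<1$, a contradiction, so this sub-case does not arise. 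If $2(m_1-1)<k+1$, I would first squeeze $2^{k+1-2(m_1-1)}<m_2$ out of the approximate equality (the $n_2$-term has to dominate), then multiply by $2^{k+1}$, obtaining an integer of absolute value $<2^{k+2}/2^{\min\{4k/3,3m_1\}}$; this is $<\tfrac12$ either trivially when the minimum is $4k/3$ (as $k>500$), or, when it is $3m_1$, because $2^{k+1-2(m_1-1)}<m_2=n_2(m_1-1)\le\max\{k^2,n_2^2\}<2^{m_1-6}$ by Lemma~\ref{lem:7}, so the bound equals $\tfrac12\cdot 2^{k+1-2(m_1-1)}/2^{m_1}<\tfrac12$. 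In every case the integer is $0$, so \eqref{eq:71} holds; and because $m_2>k$ makes its left side negative, $\epsilon$ must be $+1$, which yields both remaining conclusions.

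The hard part is purely the exponent bookkeeping in these two case splits: one must check that clearing denominators really leaves an integer — this is exactly what forces the auxiliary sub-split at $m_1=2k/3$ in Case~1 — and that the residual error is provably below $\tfrac12$. Both of these hinge on the quantitative inputs $8n_2<2^{k/4}$ and $2^{m_1-6}>\max\{k^4,n_2^2\}$ (Lemma~\ref{lem:7}), which are already in place, so what remains is careful but routine arithmetic with powers of $2$.
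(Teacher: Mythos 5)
Your proposal is correct and follows essentially the same route as the paper: the first three conclusions are imported from Lemmas \ref{lem:maink>500}, \ref{n1=1} and \ref{lem:Luc}, and the identity together with $\epsilon=1$ is extracted by matching the G\'omez--Luca expansion of $2F_{m_2}^{(k)}$ against the Dickson expansion of $2x_{n_2}$, with the same case split on the sign of $2(m_1-1)-(k+1)$, the same sub-split at $m_1=2k/3$, and the same quantitative inputs $8n_2<2^{k/4}$ and $2^{m_1-6}>\max\{k^4,n_2^2\}$. Nothing essential differs from the paper's argument.
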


We now get an extra relation. First, from Lemma \ref{lem:20}, we get that 
\begin{equation}
\label{eq:14}
n_2=\left\{\begin{matrix} 
2^{2(m_1-1)-(k+1)}(m_2-k) & {\text{\rm if}} & 2(m_1-1)\ge k+1;\\
\frac{m_2-k}{2^{k+1-2(m_1-1)}} & {\text{\rm if}} &  2(m_1-1)<k+1.\end{matrix}\right.
\end{equation}
Since $n_2\ge 5$, we can write more terms. 
\begin{eqnarray*}
2F_{m_2}^{(k)} & = & 2^{m_2-1}\left(1+\frac{k-m_2}{2^{k+1}}+\delta_{m_2}\frac{(m_2-2k+1)(m_2-2k-2)}{2^{2k+2}}+\eta_{m_2}\right)\\
2x_{n_2} & = & 2^{n_2(m_1-1)}\left(1+\frac{-\epsilon n_2}{2^{2(m_1-1)}}+\frac{n_2(n_2-3)}{2^{4(m_1-1)+1}}+\eta_{n_2}'\right)
\end{eqnarray*}
In the formula for $F_{m_2}^{(k)}$, we have $\delta_{m_2}=\zeta_{m_2}=0$ if $m_2\le 2k+2$. But $m_2\le 2k+2$ is not possible since then 
the only terms in the first expansion of $2F_{m_2}^{(k)}$ are the first two which already coincide with the first two terms 
of the expansion of $2x_{n_2}$, but in the second expansion we have additional terms since $n_2\ge 5$ while in the first we do not, which is a contradiction. Thus, $m_2\ge 2k+3$. 

\medskip

Assume that $2(m_1-1)\ge k+1$. In this case, from \eqref{eq:14}, we deduce that
$$
n_2=2^{2(m_1-1)-(k+1)} (m_2-k)=\frac{m_2-1}{m_1-1}.
$$
So, $m_2-k\mid m_2-1$. Thus, $m_2-k\mid (m_2-1)-(m_2-k)=k-1$. This shows that $m_2-k\le k-1$, so $m_2\le 2k-1$, a contradiction. Thus, $k+1>2(m_1-1)$.

\medskip

Simplifying again the power of $2$ from the two representations of $2x_{n_2}=2F_{m_2}^{(k)}$ and eliminating the first two terms we get
$$
\frac{(m_2-2k+1)(m_2-2k-2)}{2^{2k+3}}+\eta_{m_2}= \frac{n_2(n_2-3)}{2^{4(m_1-1)+1}}+\eta_{n_2}'.
$$
Here,
$$
|\eta_{m_2}|<\frac{4m_2^3}{2^{3k+3}}<\frac{1}{2^{2k+4}}\quad {\text{\rm and}}\quad  |\eta_{n_2}'|\le \sum_{i\ge 3} \frac{1}{2^{1.5 m i}}\left(\frac{1}{2^i}\right)<\frac{1}{2^{4.5 m_1+1}},
$$
by \eqref{eq:1} and \eqref{eq:7}. Thus, 
\begin{equation}
\label{eq:xx}
\left|\frac{(m_2-2k+1)(m_2-2k-2)}{2^{2k+3}}- \frac{n_2(n_2-3)}{2^{4(m_1-1)+1}}\right|\le |\eta_{m_2}|+|\eta_{n_2}'|<\frac{2}{\min\{ 2^{2k+4}, 2^{4.5m_1+1}\}}.
\end{equation}
Recall that $2(m_1-1)< k+1$. Then, by \eqref{eq:14}, we have $n_2\mid m_2-k$. Since also $n_2\mid m_2-1$, it follows that $n_2\mid (m_2-1)-(m_2-k)=k-1$. Thus, $n_2<k$, and since  
$2^{(k+1)-2(m_1-1)}$ is a divisor of $n_2$, we conclude that $2^{(k+1)-2(m_1-1)}<k$. We multiply \eqref{eq:xx} across by $2^{2(k+1)}$. We get
$$
\left| \frac{(m_2-2k+1)(m_2-2k-2)}{2}-2^{2(k+1)-4(m_1-1)}\frac{n_2(n_2-3)}{2}\right|\le \frac{2^{2k+3}}{\min\{2^{2k+4}, 2^{4.5m_1+1}\}}.
$$
If the minimum above is $2^{2k+4}$, then the right--hand side is $<\frac{1}{2}<1$. The left--hand side is an integer, so it equals $0$. If the minimum is 
$2^{4.5 m_1+1}$, then we can rewrite it as 
$$
\frac{2^{2k+3}}{2^{4.5m_1+1}}=\frac{2^{2(k+1)-4(m_1-1)}}{2^{0.5m_1+4}}<\frac{k^2}{2^{0.5m_1+5}}<1.
$$
The right--most inequality holds because $2^{m_1-6}>k^4$ by Lemma \ref{lem:7}. Hence, the left--hand side above is again $0$. We get that
\begin{equation}
\label{eq:21}
(m_2-2k+1)(m_2-2k-2)=2^{2(k+1)-4(m_1-1)} n_2(n_2-3).
\end{equation}
So, let us record the equations we have:
\begin{equation}
\label{eq:22}
\left\{ \begin{matrix} 
m_2-1 & = & n_2(m_1-1);\\
b & = & (k+1)-2(m_1-1);\\
n_2 & = & \frac{m_2-k}{2^{b}};\\
(m_2-2k+1)(m_2-2k-2) & = & 2^{2b} n_2(n_2-3).
\end{matrix}
\right.
\end{equation}
with $b>0$. To finish, we need to prove the following lemma.

\begin{lem}
\label{lem:last}
There are no integer solutions $(b,k,m_1,m_2,n_1,n_2)$ to system \eqref{eq:22} with $n_2\ge 5$ in the range $k>500$ and $m_1>376$.
\end{lem}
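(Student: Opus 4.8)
The plan is to collapse system~\eqref{eq:22} to a single relation between $k$ and $b$ and then read off from it that $n_2<5$, contradicting the hypothesis $n_2\ge 5$. First I would eliminate $m_1$ and $m_2$: the second equation gives $m_1-1=(k+1-b)/2$ and the third gives $m_2=k+2^bn_2$, and substituting both into the first equation yields
\[
2(k-1)=n_2\bigl(k+1-b-2^{b+1}\bigr),
\]
which I will call (A); in particular $k+1-b-2^{b+1}>0$. Substituting $m_2=k+2^bn_2$ into the fourth equation, the $(2^bn_2)^2$-terms on the two sides cancel and a short expansion leaves
\[
(k-1)(k+2)=2^bn_2\bigl(2k+1-3\cdot2^b\bigr),
\]
call it (B). Since $k>1$ and $n_2>0$, multiplying (A) by $k+2$, multiplying (B) by $2$, equating the two resulting expressions for $2(k-1)(k+2)$, and cancelling $n_2$, I obtain the single relation
\[
(k+2)\bigl(k+1-b-2^{b+1}\bigr)=2^{b+1}\bigl(2k+1-3\cdot2^b\bigr),
\]
call it (C).

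Next I would put $E:=k+1-b-2^{b+1}$, a positive integer by (A), and $B:=2^b$. Writing $k+2=E+b+2B+1$ and $2k+1-3B=2E+2b+B-1$ turns (C) into the quadratic
\[
E^{2}+(b+1-2B)E-\bigl(2B^{2}+4bB-2B\bigr)=0.
\]
Its constant term equals $-2B(B+2b-1)<0$, so there is exactly one positive root, namely $2E=(2B-b-1)+\sqrt{\Delta}$ with $\Delta=12B(B+b-1)+(b+1)^{2}$. Since $b\ge 1$ (which is part of~\eqref{eq:22}), we have $\Delta>12B^{2}$, hence $\sqrt{\Delta}>2\sqrt3\,B$, and therefore $2E>(2+2\sqrt3)B-b-1>0$. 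On the other hand $k-1=E+b+2B-2$ by the definition of $E$, so (A) gives
\[
n_2=\frac{2(k-1)}{E}=2+\frac{4(b+2B-2)}{2E}<2+\frac{4(b+2B-2)}{(2+2\sqrt3)B-b-1}.
\]
The last fraction is smaller than $3$ exactly when $7b-5<(6\sqrt3-2)B$; since $6\sqrt3-2>8$ and $7b-5<7b\le 7\cdot2^{b}<8B$ (using $b\le 2^{b}$), this holds for every $b\ge1$. Hence $n_2<5$, contradicting $n_2\ge5$, which proves Lemma~\ref{lem:last} and with it Theorem~\ref{Main} in the range $k>500$, $m_1>376$.

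I do not expect a genuine obstacle here; the whole argument is elementary algebra. The point that needs to be handled with care is the order of the eliminations: one must first remove $n_2$ between (A) and (B) to arrive at the single relation (C) in $(k,b)$, and then recognize (C) as a quadratic in the combination $E=k+1-b-2^{b+1}$, after which $n_2$ becomes an explicit function of $b$ and $2^{b}$ alone. The only remaining check is the closing inequality $7b-5<(6\sqrt3-2)\,2^{b}$ for all integers $b\ge1$, including the small values, which is immediate from $b\le 2^{b}$.
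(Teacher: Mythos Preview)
Your proof is correct, and it is genuinely different from the paper's argument. The paper first observes that $n_2\mid k-1$, deduces from this that $2^{b}<k^2$ so $b<3\log k$, and then carries out a lengthy case analysis: it splits on the parity of the two factors $m_2-2k+1$ and $m_2-2k-2$, argues that $2^{2b}\mid m_2-2k-2$, writes $n_2=(k-1)/D$ with $D\in\{1,\ldots,11\}$, and finally reduces to the two subcases $(m_2-2k-2)/2^{2b}\in\{1,2\}$, each of which is eliminated by an explicit contradiction (yielding $b=0$ or $k^2-5k-18=0$). Several of these steps rely on the numerical hypothesis $k>500$. Your approach instead eliminates $m_1,m_2$ and then $n_2$ to obtain a single relation~(C) between $k$ and $b$, recognises it as a quadratic in $E=k+1-b-2^{b+1}$, and uses the explicit positive root to bound $n_2<5$ directly via the elementary inequality $7b-5<(6\sqrt{3}-2)2^{b}$. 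This is cleaner, avoids all case distinctions, and in fact never uses the range restrictions $k>500$ or $m_1>376$; you prove the stronger statement that system~\eqref{eq:22} with $b\ge 1$ admits no integer solution with $n_2\ge 5$ at all.
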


Now that we are seeing the light at the end of the tunnel, let's prove  Lemma \ref{lem:last}. As we saw, $n_2\mid (k-1)$. The last equation in system \eqref{eq:22} is 
$$
\left(\frac{m_2-1}{n_2}-\frac{2(k-1)}{n_2}\right)(m_2-2k-2)=2^{2b} (n_2-3),
$$
or, using the first equation in system \eqref{eq:22}, 
$$
\left(m_1-1-\frac{2(k-1)}{n_2}\right)(m_2-2k-2)=n_2-3.
$$
Now $n_2<k$ and $m_1\le k+1$, so from the first equation $m_2<k^2$. Since $2^{b}\mid m_2-k$, we get that $2^{b}<k^2$, so $b<2(\log k)/(\log 2)<3\log k$. Since $b=(k+1)-2(m_1-1)$, we get 
that 
$$
m_1=\frac{k+3-b}{2}\in \left(\frac{k+3-3\log k}{2}, \frac{k+3}{2}\right).
$$
In the last equation in the left, at most one of $m_1-1-2(k-1)/n_2$ (divisor of $m_2-2k+1$) and $m_2-2k-2$ is even. If the first one is even, then $m_2-2k-2$ is a divisor of $n_2-3$. Thus, 
$$
n_2-3\ge m_2-2k-2=n_2(m_1-1)-2k-1\ge n_2\left(\frac{k+1-3\log k}{2}\right)-2k-1,
$$
giving
$$
2k-2\ge n_2\left(\frac{k+1-3\log k}{2}-1\right)=n_2\left(\frac{k-1-3\log k}{2}\right).
$$
Since $n_2\ge 5$, we get
$$
4k-4\ge 5(k-1-3\log k),\qquad {\text{\rm or}}\qquad k\le 15\log k+1,
$$
giving $k\le 63$, a contradiction. Thus, $2^{2b}\mid m_2-2k-2$. Hence, 
$$
\left(m_1-1-\frac{2(k-1)}{n_2}\right)\left(\frac{m_2-2k-2}{2^{2b}}\right)=n_2-3,
$$
and all fractions above are in fact integers. The left--most integer is 
$$
m_1-1-\frac{2(k-1)}{n_2}\ge \frac{k+1-3\log k}{2}-\frac{2(k-1)}{5}>\frac{k-1}{12}-3
$$
since $k>500$. Since this number is a divisor of (so, at most as large as) the number $n_2-3=(k-1)/D-3$ for some integer $D$, we get that $D\in \{1,2,\ldots,11\}$. Thus, $(k-1)/D\in \{1,\ldots,11\}$, so 
$$
m_1-1-\frac{2(k-1)}{n_2}\ge \frac{k+1-3\log k}{2}-22=\frac{k-43-3\log k}{2}.
$$
Now let us look at the integer $(m_2-2k-2)/2^{2b}$. Assume that it is at least $3$. We then get
$$
3\left(\frac{k-43-3\log k}{2}\right)\le n_2-3\le k-4,\qquad {\text{\rm or}}\qquad k\le 121+9 \log k,
$$
and this is false for $k\ge 500$. Thus, $(m_2-2k-2)/2^{2b}\in \{1,2\}$. 

\noindent
Assume that $(m_2-2k-2)/2^{2b}=1$. Then 
$$
m_1-1-\frac{2(k-1)}{n_2}=n_2-3.
$$
The number in the left hand side is 
$$
m_1-1-\frac{2(k-1)}{n_2}\ge \frac{k+1-3\log k}{2}-22=\frac{k-43-3\log k}{2}>\frac{k-1}{3}-3
$$
(since $k>500$) and also
$$
m_1-1-\frac{2(k-1)}{n_2}\le m_1-3\le \frac{k-3}{2}<k-4.
$$
Thus, writing again $n_2=(k-1)/D$, we get that 
$$
n_1-3=\frac{k-1}{D}-3\in \left(\frac{k-1}{3}-3, \frac{k-1}{1}-3\right),
$$
showing that $1<D<3$, so $D=2$. Thus, $n_2=(k-1)/2$, and we get that
$$
\frac{k-7}{2}=\frac{k-1}{2}-3=n_2-3=m_1-1-\frac{2(k-1)}{n_2}=m_1-1-4=m_1-5,
$$
so 
$$
m_1=\frac{k+3}{2},\qquad {\text{\rm so}}\qquad b=0,
$$
which is impossible. 

Assume next that $(m_2-2k-2)/{2^{b}}=2$. In this case, we get
$$
n_2-3=2\left(m_1-1-\frac{2(k-1)}{n_2}\right).
$$
Proceeding as before, we have 
\begin{eqnarray*}
\frac{k-1}{D}-3 & = & n_2-3=2\left(m_1-1-\frac{2(k-1)}{n_2}\right)\ge 2\left(\frac{k+1-3\log k}{2}-22\right)\\
&  = &k-43-3\log k>\frac{k-1}{2}-3,
\end{eqnarray*}
showing that $D<2$. Thus, $D=1$ and so $n_2=k-1$. Hence,
$$
k-4=n_2-3=2\left(m_1-1-\frac{2(k-1)}{n_2}\right)=2(m_1-1-2)=2(m_1-3),
$$
so
$$ 
m_1=\frac{k+2}{2},\qquad {\text{\rm therefore}}\qquad b=1.
$$
Thus, $m_2-2k-2=2^{2b+1}=8$. Consequently, 
$$
8=(m_2-1)-2k-1=n_2(m_1-1)-2k-1=\frac{(k-1)k}{2}-2k-1=\frac{k^2-5k-2}{2},
$$
giving $k^2-5k-18=0$, which is impossible.

So, indeed there are no solutions with $k>500$ and $m_1>376$ other than the ones from (i) and (ii) of Theorem \ref{Main}. \qed

\section{The computational part $k\le 500$ or $m_1\le 376$}

Throughout this section, we make the following definition.

\begin{definition}
Assume that $k\ge 4,~x_1\ge 1,~\epsilon\in \{\pm 1\}$ are given such that there exist $n_1\ge 1$ and $m_1\ge 2$ such that $x_{n_1}=F_{m_1}^{(k)}$. 
We say that $n_1$ is minimal if there is are no  positive integers $n_0<n_1$ and $m_0<m_1$ such that the equality $x_{n_0}=F_{m_0}^{(k)}$ also holds.
\end{definition}

The aim of this section is to first show that in the range $k\le 500$ or $m_1\le 376$, all solutions of $x_{n_1}=F_{m_1}^{(k)}$ with $n_1$ minimal have $n_1=1$.   
Then we finish the calculations. 

\subsection{The case $k\le 500$}
\label{subcomp1}

Here, we exploit inequality \eqref{ineqqq2}, which we consider convenient to remind:
\begin{equation}
\label{ineqqq22}
|(n_2-n_1)\log(2f_k(\alpha)) -(n_1m_2-n_2m_1+n_2-n_1)\log\alpha|< \dfrac{6n_{2}}{\alpha^{m_1-1}}.
\end{equation}
Thus,
\begin{equation}
\label{eq:chi}
\left|\chi_k-\frac{N}{n_2-n_1}\right|<\frac{6n_2}{(n_2-n_1)\alpha^{m_1-1}\log \alpha},\qquad \chi_k:=\frac{\log(2f_k(\alpha))}{\log \alpha},
\end{equation}
with $N:=n_1m_2-n_2m_1+n_2-n_1$. Lemma \ref{lemma12} shows that
$$
n_2-n_1<n_2 < 8.2\times 10^{14}k^4(\log k)^3<10^{29}.
$$
The right--hand side of \eqref{eq:chi} can be rewritten as 
\begin{equation}
\label{eq:ass1}
\frac{1}{2(n_2-n_1)^2}\left(\frac{\alpha^{m_1-1}\log \alpha}{12n_2(n_2-n_1)}\right)^{-1}.
\end{equation}
Assume that
\begin{equation}
\label{eq:ass}
\frac{\alpha^{m_1-1}}{\log \alpha}>12 (8.2\times 10^{14} k^4 (\log k)^3)^2.
\end{equation}
Using $\alpha>1.927$, inequality \eqref{eq:ass} holds with $k\le 500$ for all $m_1\ge 203$. In this case, inequalities \eqref{eq:ass1}, \eqref{eq:chi} and Lemma 
\ref{lem:legendre} (i) show that $N/(n_1-n_1)=p_j^{(k)}/q_j^{(k)}$ for some $j\ge 0$, where $p_j^{(k)}/q_j^{(k)}$ is the $j$th convergent of $\chi_k$. Note that $\chi_k\in (0,1)$ because by Lemma \ref{fala5} (i), we have
$1<2f_k(\alpha)<1.5<\alpha$. 

\medskip

We distinguish two cases.

\medskip

{\bf Case 1.} $N\ne 0$.

\medskip

In this case, $j\ge 1$. Since 
$$
n_2-n_1\le 10^{29}<F_{150}\le q_{150}^{(k)},
$$
where $F_{150}$ is the $150$th member of the Fibonacci sequence, it follows that if we take
$$
Q:=\max\{a_i^{(k)}: 2\le i\le 150; 4\le k\le 500\},
$$
then Lemma \ref{lem:legendre} (ii) implies that 
$$
\frac{1}{(Q+2)(q_j^{(k)})^2}<\left|\chi_k-\frac{N}{n_2-n_1}\right|<\frac{6n_2}{(n_2-n_1)\alpha^{m_1-1}\log \alpha}.
$$
A computer calculation shows that $Q=433576$, so $Q+2<10^6$. Hence,
\begin{eqnarray*}
\alpha^{m_1-1}\log \alpha & < & 6n_2 (Q+2)(q_j^{(k)})^2(n_2-n_1)<6\times 10^6 n_2^2\\
& < & 6\times 10^6 (8.2\times 10^{14} 500^4 (\log 500)^3)^2,
\end{eqnarray*}
and using $\alpha\ge 1.927$, we get $m_1\le 221$. 

\medskip 

{\bf Case 2.} $N=0$.

\medskip

In this case, inequality \eqref{eq:chi} gives
$$
\alpha^{m_1-1}\log \alpha<6n_2\chi_k^{-1}<6\times (8.2\times 10^{14} 500^4(\log 500)^3) \chi_k^{-1}.
$$
A computation with {\it Mathematica} reveals that $\chi_k^{-1}<10^{148}$ for $k\le 500$. Feeding this into the above inequality, we get
$m_1\le 720$. Note that since $N=0$, we also have $n_1(m_2-1)=n_2(m_1-1)$. In particular, $n_1=1$ is not possible in this case.

\medskip

Let us record what we just proved.

\begin{lem}
\label{lem:23}
If $k\le 500$, then the following hold:
\begin{itemize}
\item[(i)] $m_1\le 221$;
\item[(ii)] $m_1\in [222,720]$, but $n_1>1$.
\end{itemize}
\end{lem}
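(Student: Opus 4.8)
The plan is to turn the "two--equation" inequality \eqref{ineqqq22} into a Diophantine approximation statement for the fixed irrational number $\chi_k := \log(2f_k(\alpha))/\log\alpha$ and to exploit the theory of continued fractions via Legendre's criterion (Lemma \ref{lem:legendre}), the point being that $\chi_k$ is a quantity depending only on $k$, of which there are finitely many in the range $k\le 500$. Dividing \eqref{ineqqq22} through by $(n_2-n_1)\log\alpha$ gives exactly \eqref{eq:chi},
$$
\left|\chi_k-\frac{N}{n_2-n_1}\right|<\frac{6n_2}{(n_2-n_1)\alpha^{m_1-1}\log\alpha},\qquad N:=n_1m_2-n_2m_1+n_2-n_1,
$$
and by Lemma \ref{lemma12} we have $n_2-n_1<n_2<10^{29}$. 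If $m_1$ is large enough that the right-hand side is at most $1/(2(n_2-n_1)^2)$ — which, using $\alpha>1.927$ and Lemma \ref{lemma12}, holds for $k\le 500$ as soon as $m_1\ge 203$, cf. \eqref{eq:ass} — then Lemma \ref{lem:legendre} forces $N/(n_2-n_1)=p_j^{(k)}/q_j^{(k)}$ to be a convergent of $\chi_k$; note $\chi_k\in(0,1)$ since $1<2f_k(\alpha)<\alpha$ by Lemma \ref{fala5}(i). (When $m_1<203$ there is nothing to prove, as then $m_1\le 221$ already.)

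Next I would split according to whether $N=0$. If $N\ne 0$ the convergent index is $j\ge 1$; since $n_2-n_1\le 10^{29}<F_{150}\le q_{150}^{(k)}$, the denominator $q_j^{(k)}$ is among the first $150$ convergent denominators of $\chi_k$, so the lower bound \eqref{eq:continuedfraction1} of Legendre's lemma gives
$$
\frac{1}{(Q+2)(q_j^{(k)})^2}<\left|\chi_k-\frac{N}{n_2-n_1}\right|<\frac{6n_2}{(n_2-n_1)\alpha^{m_1-1}\log\alpha},\qquad Q:=\max\{a_i^{(k)}:2\le i\le 150,\ 4\le k\le 500\}.
$$
A direct computation of the continued fraction expansions of the finitely many numbers $\chi_4,\dots,\chi_{500}$ shows $Q=433576<10^6$, and combining with $q_j^{(k)}\le q_{150}^{(k)}$ and $n_2<8.2\times 10^{14}k^4(\log k)^3$ yields $\alpha^{m_1-1}\log\alpha<6\times 10^6 n_2^2$, hence $m_1\le 221$ after using $\alpha\ge 1.927$. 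If instead $N=0$, then \eqref{eq:chi} directly gives $\alpha^{m_1-1}\log\alpha<6n_2\chi_k^{-1}$, and a \textit{Mathematica} computation that $\chi_k^{-1}<10^{148}$ for all $k\le 500$ leads to $m_1\le 720$; moreover $N=0$ means $n_1(m_2-1)=n_2(m_1-1)$, which together with $n_1<n_2$ and $m_1<m_2$ forces $n_1>1$.

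Assembling the pieces proves the lemma: if $m_1\le 221$ we are in case (i), while if $m_1\ge 222$ then $m_1\ge 203$, so the Legendre step applies and the subcase $N\ne 0$ is excluded (it would force $m_1\le 221$), hence $N=0$, giving $m_1\le 720$ and $n_1>1$, i.e.\ case (ii); $m_1>720$ is impossible. The only genuinely laborious part is computational — verifying the threshold $m_1\ge 203$, the estimate $F_{150}>10^{29}$, the value $Q=433576$, and the bound $\chi_k^{-1}<10^{148}$ — each a finite check over $4\le k\le 500$. The one conceptual subtlety to get right is the dichotomy $N=0$ versus $N\ne 0$: it is precisely the degenerate, "proportional-index" case $N=0$ (equivalently $n_1/n_2=(m_1-1)/(m_2-1)$) that produces the weaker bound $m_1\le 720$, but that same case simultaneously rules out $n_1=1$, which is why the two alternatives in the statement are mutually compatible.
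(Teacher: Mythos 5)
Your argument reproduces the paper's proof of this lemma essentially step for step: the passage from \eqref{ineqqq22} to \eqref{eq:chi}, the bound $n_2-n_1<n_2<10^{29}$ from Lemma \ref{lemma12}, the threshold $m_1\ge 203$ guaranteeing \eqref{eq:ass} so that Legendre's criterion applies (with the correct remark that $m_1<203$ already lands in case (i)), the dichotomy $N\ne 0$ (giving $m_1\le 221$ via $q_j^{(k)}\le q_{150}^{(k)}$ and $Q=433576$) versus $N=0$ (giving $m_1\le 720$ via $\chi_k^{-1}<10^{148}$), and the final assembly are exactly the steps and constants of the paper.

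The one place where you add a justification the paper does not give is also the one place where your reasoning fails: you claim that $n_1(m_2-1)=n_2(m_1-1)$ \emph{together with} $n_1<n_2$ and $m_1<m_2$ forces $n_1>1$. This is a non sequitur. For instance $n_1=1$, $n_2=2$, $m_1=300$, $m_2=599$ satisfies $1\cdot 598=2\cdot 299$ as well as $n_1<n_2$ and $m_1<m_2$, so the proportionality relation plus the orderings does not by itself exclude $n_1=1$. (To be fair, the paper's own proof merely asserts ``$n_1=1$ is not possible in this case'' without supplying a reason, so you have faithfully reproduced the conclusion; but the specific inference you offer in its place is false as stated, and part (ii) needs a genuine argument at this point rather than the purely combinatorial implication you invoke.) Everything else in your write-up is correct and matches the paper.
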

For reasons that will become clear later, we allow $m\le 1049$ (instead of just $m\le 720$). 
To continue, assume first that $x_1\in \{1,2,3,\ldots,20\}$. We then generate all values of $\delta=x_1+{\sqrt{x_1^2-\epsilon}}$ for $\epsilon\in \{\pm 1\}$. We generate $x_{n_1}=(\delta^{n_1}+\eta^{n_1})/2$, where $\eta$ is the Galois conjugate of $\delta$ in the quadratic field ${\mathbb Q}(\delta)$, for all $1\le n\le m\le 1049$ and we test
for the equation
$$
x_{n}=F_{m}^{(k)}\qquad 4\le k\le 500,\quad 2\le m\le 1049.
$$
The only solutions we find  computationally have:
\begin{itemize}
\item[(i)] $n=1$ and $x_1\in \{1,2,4,8,15,16\}$;
\item[(ii)] $n=2$ and $x_2\in \{31,127,511\}$. These are not minimal because $x_2=31=F_7^{(5)}$ has $\epsilon=1$ and for it $x_1=4=F_7^{(4)}$, 
$x_2=127=F_9^{(7)}$ has $\epsilon=1$ and for it $x_1=8=F_9^{(5)}$, while $x_2=511=F_{11}^{(9)}$ has $\epsilon=1$ and 
for it $x_1=16=F_{11}^{(6)}$, as stated in (i) of Theorem \ref{Main} with $k=7$, $9$, and $11$, respectively. 
\item[(iii)] $n=3$ and $x_3=16336=F_{19}^{(13)}$. This is not minimal since $x_1=16=F_{6}^{(13)}$, as stated in (ii) of Theorem \ref{Main} with $a=1$. 
\end{itemize}
Assume now that $x_1\ge 21$. Then $\delta\ge 21+{\sqrt{440}}$. Inequality \eqref{eq:uuu} together with the fact that $m_1\le 1050$ gives
$$
n_1\le \frac{(m_1+1)\log \alpha}{\log \delta}\le \frac{1051\log 2}{\log(21+{\sqrt{440}})},
$$
so $n_1\le 194$. Our next goal is to show that in our range $k\le 500$ and $m\le 1049$, 
we must have $n\in \{1,2,3\}$. For this, assume that $n>3$. Every positive integer $>3$ is either divisible by $4,~6,~9$ or a prime $p\ge 5$. 
Thus, we generate the set 
$$
{\mathcal B}=\{4,6,9,p_k: 3\le k\le 44\},
$$
a set with $45$ elements, where $p_k$ is the $k$th prime. We use the fact that if $a\mid b$, then $x_b$ is the $a$th solution of the Pell equation whose first (smallest) $x$-coordinate is $x_{b/a}$ (that is, 
$\delta$ gets replaced by $\delta^{b/a}$). In particular, 
$x_{n_1}$ is $x_b$ for some $b\in B$ and some value of $x_1$. Further, say $y=F_m^{(k)}$ for some $m\in [2,1049]$ and $k\in [3,500]$. We then need to solve $x_b=y.$ Note that if $z\ge 1$ and $n\ge 2$, then 
\begin{equation}
\label{eq:z}
(z^n+1)^{1/n}-z=z\left(\left(1+\frac{1}{z^n}\right)^{1/n}-1\right)<\frac{1}{nz^{n-1}}\le \frac{1}{2}.
\end{equation}
Thus, 
$$
x_b=\left(x_1+{\sqrt{x_1^2-\epsilon}}\right)^b+\left(x_1-{\sqrt{x_1^2-\epsilon}}\right)^b=2y
$$
implies
$$
x_1+{\sqrt{x_1^2-1}}\in ((2y-1/2)^{1/b}, (2y+1/2)^{1/b}).
$$
Further, this leads to
$$
2x_1\in ((2y-1/2)^{1/n}-1/2, (2y+1/2)^{1/n}+1/2).
$$
The length of the interval on the right above is, by \eqref{eq:z}, at most $2$, so it contains at most one even integer $2x_1$ and if it contains one, it must be such that
\begin{equation}
\label{eq:x1}
x_1=\left\lfloor \frac{1}{2} \left(\left(2y+\frac{1}{2}\right)^{1/b}+\frac{1}{2}\right)\right\rfloor.
\end{equation}
So what we did was for each $y=F_m^{(k)}$ and each $b\in B$, we calculated the last $10$-digits of the integer shown at \eqref{eq:x1} (that is, we only calculated it modulo $10^{10}$). Then we picked 
$\epsilon\in \{\pm 1\}$ and generated $\{x_n\}_{n\ge 0}$ as the sequence given by $x_0:=1$, $x_1$ given by \eqref{eq:x1} modulo $10^{10}$ and $x_{n+1}=(2x_1) x_n-\epsilon x_{n-1} \pmod {10^{10}}$ for all $n\ge 1$. 
In this way, we never kept more that then last $10$ digits of $x_n$. And we checked whether indeed $x_b\equiv y \pmod {10^{10}}$. Unsurprisingly, no solution was found. 
We used the same program for $n_1=2,3$. For these we got that all solutions of (i) in our range were candidates for $n_1=2$ and all solutions (ii) in our range were candidates for $n_1=3$.
By candidates we meant that we only checked out these equalities modulo $10^{10}$. They turn out to be actual solutions
for $\epsilon=1$ (and they are not solutions with $\epsilon=-1$ just because a number of the form $2^{2j+1}-1$ with $j\ge 2$ cannot be also of the form $2z^2+1$ for some integer $z$, while a number of the form
$4x^3-3x$  for some integer $x>1$ then it cannot be also of the form $4z^3+3z$ for some integer $z$). Finally, one word about ``recognising" $y$ as  number of the form $F_m^{(k)}$. It follows from a result of 
Bravo and Luca \cite{BrLu} that the equation $F_m^{(k)}=F_n^{(\ell)}$ with $m\ge k+2,~n\ge \ell+2$ and $k>\ell\ge 4$ has no solutions $(m,k,n,\ell)$. Thus, if we already know a representation of a representation of $y$ as $F_m^{(k)}$ for some $m$ and $k\ge 4$, then it is unique. In particular, for $j\ge 2$, $F_{2j+3}^{(2j+1)}$  is the only representation of $2^{2j+1}-1$ as a $F_m^{(k)}$ for some positive integers $m$ and $k\ge 4$.

\subsection{The case $m_1\le 376$} We may assume that $k>500$, otherwise we are in the preceding case. Thus, $k>m_1$, so $n_1=1$. Thus, $\delta=2^{m_1-2}+{\sqrt{2^{2m_1-2}-\epsilon}}$
for all $m_1\ge 2$ and $\epsilon\in \{\pm 1\}$ (except for $m_1=2$, case in which only $\epsilon=1$ is possible). We now go back to the proof of Lemma \ref{lem:7} to get 
that the inequality \eqref{eq:41}, recalled below
\begin{equation}
\label{eq:60}
\left|n_2\log \delta-(m_2-1)\log 2\right|<\frac{1}{2^{2k/3-2}}
\end{equation}
implies \eqref{eq:boundLambda}, namely
$$
2k/3-2<195(\log \delta) \max\left\{10.5, \log(3m_2)\right\}^2.
$$
For us, $\log \delta\le m_1\log 2\le 376\log 2$. Using also the  upper bound from Lemma \ref{lemma12} on $m_2$, we get
$$
2k/3-2<195\times 376 (\log 2) \max\left\{10.5,\log(3\times 4.1\times 10^{22} k^7 (\log k)^6)\right\}^2,
$$
leading to $k<4\times 10^9$. Thus, by Lemma \ref{lemma12} again, 
$$
n_2< 8.2\times 10^{14} k^4(\log k)^3<8.2\times 10^{14} (4\times 10^9)^4 (\log(4\times 10^9))^3<10^{58}.
$$
Now \eqref{eq:60} gives
\begin{equation}
\label{eq:61}
\left|\frac{\log \delta}{\log 2}-\frac{m_2-1}{n_2}\right|<\frac{1}{(\log 2)2^{2k/3-1} n_2}.
\end{equation}
In our range, the right--hand side above is smaller than $1/(2n_2^2)$. Indeed, this is equivalent to $n_2<2^{2k/3-3} (\log 2)$, which holds provided that
$$
8.2\times 10^{14} k^4 (\log k)^3<2^{2k/3-3} (\log 2),
$$
which indeed holds for all $k>500$. Thus, $(m_2-1)/n_2=p_j/q_j$ is some convergent of $\log \delta/\log 2$. Since its denominator $q_j$ divides $n_2$ and 
$$
q_j\le n_2<10^{58}<F_{299},
$$
where $F_{299}$ is the $299$th term of the Fibonacci sequence, it follows that $j\le 298$. We generated the continued fractions of all $\log \delta/\log 2$ for all 
possibilities for $m_1\le 376$, $\epsilon\in \{\pm 1\}$ and $j\le 299$ and collected together the obtained values of $a_j$. The maximum value obtained  was $1033566$.
Hence, 
$$
\frac{1}{1.1\times 10^7 n_2^2}<\frac{1}{(a_{j+1}+2)n_2^2}<\left|\frac{\log \delta}{\log 2}-\frac{m_2-1}{n_2}\right|<\frac{1}{(\log 2)2^{2k/3-1} n_2},
$$
giving
$$
2^{2k/3-2}\log 2<1.1\times 10^7 n_2<1.1\times 10^7\times (8.2\times 10^{14} k^4 (\log k)^3),
$$
giving $k\le 166$, a contradiction. 

Thus, this case leads to no solution, and we must have $k\le 500$, $n_1=1$ and $m_1\le 221$ by Lemma \ref{lem:23}. 

\subsection{The final computations}

Now we go to inequality \eqref{eq:Gamma3} for $(n,m)=(n_2,m_2)$:
\begin{equation}
\label{eq:Gamma4}
|n_2\log\delta - \log(2f_{k}(\alpha))-(m_2-1)\log\alpha|<\frac{3}{\alpha^{m_2-1}}.
\end{equation}
We divide both sides by $\log\alpha$ and get
$$
|n_2\tau-(m_2-1)-\mu|<\frac{A}{B^{m_2-1}},\quad (\tau,\mu,A,B):=\left(\frac{\log \delta}{\log \alpha}, \frac{\log(2f_k(\alpha)}{\log \alpha},\frac{3}{\log(1.92)}, 1.92\right).
$$
We have 
$$
n_2\le 8.2\times 10^{14} k^4(\log k)^3\le 8.2 \times 10^{14} (500)^4 (\log 500)^3<1.3\times 10^{28}:=M.
$$
Since $6M<10^{30}<F_{150}$, we try $q_{\lambda}$ for some $\lambda\ge 150$. 
A computer code ran through the range $k\in [4,500]$, $m_1\in [2,221]$ and $\epsilon\in \{\pm 1\}$, generated 
$\delta=2^{m_1-2}+{\sqrt{2^{2(m_1-2)}-\epsilon}}$ (except for $m_1=2$, when only $\epsilon=1$ is possible), and confirmed the following:
\begin{itemize}
\item[(i)] For $4\le k\le 500$ and $\lambda=200$, we have $\varepsilon>0$ in all cases. 
\item[(ii)] The maximal value of $1+\lfloor \log(Aq_{\lambda}/\varepsilon)/\log B\rfloor$ in (i) above is $1049$.
\end{itemize}
Applying Lemma \ref{Dujjella}, we got that in all cases $m_2\le 1049$ by using $q_{200}$. By the calculations from Subsection \ref{subcomp1} where in fact 
we treated the case $m\le 1049$, we get that $(n_2,m_2)$ is one of the solutions listed in (i) or (ii) of Theorem \ref{Main}. This finishes the proof of the theorem.

\subsection*{Acknowledgements}
M.~D. was supported by the Austrian Science Fund (FWF) grants: F5510-N26 -- Part of the special research program (SFB), ``Quasi Monte Carlo Metods: Theory and Applications'', P26114-N26 --``Diophantine Problems: Analytic, geometric and computational aspects'' and W1230 --``Doctoral Program Discrete Mathematics''. Part of the work in this paper was done when both authors visited the Max Plank Institute for Mathematics Bonn, in March 2018. They thank this institution for hospitality and a fruitful working environment. F.~L. was also supported by grant CPRR160325161141 and an A-rated scientist award both from the NRF of South Africa and by grant no. 17-02804S of the Czech Granting Agency.

\end{document}